\newtheorem{thm}{Theorem}[section]
\newtheorem{lem}[thm]{Lemma}
\newtheorem{conj}[thm]{Conjecture}
\newtheorem{cor}[thm]{Corollary}
\newtheorem{defi}[thm]{Definition}
\newtheorem{fact}[thm]{Fact}
\newtheorem{eg}[thm]{Example}
\newcounter{claim}
\newenvironment{proof}[1][]%
 {\noindent {\setcounter{claim}{0}\sc proof ---
   }{#1}{}}{\hfill$\Box$\vspace{2ex}}
\newenvironment{claim}[1][]%
{\refstepcounter{claim}\vspace{1ex}\noindent{\it{\textit{Claim }\arabic{claim}{#1}{: }}}\it}{\vspace{1ex}}
\newenvironment{proofclaim}[1][]%
	{\noindent {}{#1}{}}{ This proves Claim~\arabic{claim}.\vspace{1ex}}
\newcommand{\ov}{\overline}
\newcommand{\cH}{{\cal H}}
\newcommand{\mc}{\mathcal}
\newcommand{\sm}{\setminus}
\newcommand{\prc}{\preccurlyeq}
\title{Lines, betweenness and metric spaces}
\author[1]{Pierre Aboulker}
\author[2]{Xiaomin Chen}
\author[3]{Guangda Huzhang}
\author[4]{\\ Rohan Kapadia}
\author[4]{Cathryn Supko}
\affil[1]{Universidad Andres Bello (Santiago, Chile)\footnote{supported by Fondecyt Postdoctoral grant 3150314 of CONICYT Chile}}
\affil[2]{Shanghai Jianshi LTD (Shanghai, China)}
\affil[3]{Shanghai Jiao Tong University (Shanghai, China)}
\affil[4]{Concordia University (Montr\'eal, Canada)}
\begin{document}

\maketitle

\begin{abstract}
A classic theorem of Euclidean geometry asserts that any noncollinear set of $n$ points in the plane determines at least $n$ distinct lines. Chen and Chv\'atal conjectured that this holds for an arbitrary finite metric space, with a certain natural definition of lines in a metric space.

We prove that in any metric space with $n$ points, either there is a line containing all the points
or there are at least
$\Omega(\sqrt{n})$ lines. This is the first polynomial lower bound on the number of lines in general finite metric spaces.
In the more general setting of pseudometric betweenness, we prove a corresponding bound of $\Omega(n^{2/5})$ lines.
When the metric space is induced by a connected graph, we prove that either there is a line containing all the points
or there are $\Omega(n^{4/7})$ lines, improving the previous $\Omega(n^{2/7})$ bound.
We also prove that the number of lines in an $n$-point metric space is at least $n / 5w$,
where $w$ is the number of different distances in the space, and we give an $\Omega(n^{4/3})$
lower bound on the number of lines in metric spaces induced by graphs with constant diameter,
as well as spaces where all the positive distances are from \{1, 2, 3\}.
\end{abstract}

\section{Introduction}

A classic theorem in plane geometry states that any $n$ points in the Euclidean plane determine at least $n$ lines, unless they are all on the same line.
As noted by Erd\H{os} (\cite{E43}), this fact is a corollary of the Sylvester-Gallai theorem, which asserts that, for every non-collinear set $S$ of $n$ points in the plane, some line goes through precisely two points of $S$. It is also a special case of a well known combinatorial theorem later proved by De Bruijn and Erd\H{o}s (\cite{DBE}).

Coxeter \cite{Cox} gave a proof of the Sylvester-Gallai theorem using \textit{ordered geometry}: that is, without using notions of measurement of distances or measurement of angles, but instead employing the ternary relation of \textit{betweenness}. A point $b$ is said to \textit{lie between} points $a$ and $c$ if $b$ is an interior point of the line segment linking $a$ and $c$.
We write $[abc]$ for the statement that $b$ lies between $a$ and $c$ and we say that $\{a,b,c\}$ is \emph{collinear}.
In this notation, a \textit{line} $\ov{ab}$ is defined (for any two distinct points $a$ and $b$) as
\begin{equation} \label{def}
\ov{ab} = \{a, b\} \cup \{ c: \{a, b, c\} \text{ is collinear}\}.
\end{equation}

For three distinct points $a, b, c$ in an arbitrary metric space $(V, \rho)$, we say that $b$ lies \emph{between} $a$ and $c$ if
\[ \rho(a, c) = \rho(a, b) + \rho(b, c) \]
and we write $[abc]$ when this holds. Using this definition of betweenness, the \emph{line generated} by two points $a$ and $b$, denoted $\ov{ab}$, is defined to be the set consisting of $a$, $b$ and all other points $c$ satisfying one of the following:
\[
\begin{split}
\rho(a, b) & = \rho(a, c) + \rho(c, b) \Leftrightarrow [acb]\Leftrightarrow [bca]\\
\rho(a, c) & = \rho(a, b) + \rho(b, c)\Leftrightarrow [abc] \Leftrightarrow [cba]\\
\rho(b, c) & = \rho(b, a) + \rho(a, c)\Leftrightarrow [bac] \Leftrightarrow [cab].
\end{split}
\]
A line that contains all points of $V$ is called \emph{universal}.
With this definition of lines in a metric space, Chen and Chv\'{a}tal conjectured:

\begin{conj}\label{conj.cc} (\cite{CC}) In any finite metric space $(V, \rho)$ with $|V| \geq 2$, either there is a universal line or there are at least $|V|$ distinct lines.
\end{conj}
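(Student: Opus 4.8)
The plan is to imitate the combinatorial proof of the De Bruijn–Erd\H{o}s theorem, of which this conjecture is the exact metric analogue. I would set up the incidence structure between the point set $V$ (with $n = |V|$) and the family $\cl$ of distinct lines (with $m = |\cl|$). For a point $p$ and a line $L$ write $p \in L$ when $p$ lies on $L$, and set $r_p = |\{L \in \cl : p \in L\}|$ and $k_L = |L|$. Assuming there is no universal line (so $k_L < n$ for all $L$) and, for contradiction, that $m \le n$, the engine is a double count over the non-incident pairs. Grouping by $p$ and by $L$ respectively gives the two identities
\[
\sum_{p \notin L} \frac{1}{n(m - r_p)} \;=\; 1 \;=\; \sum_{p \notin L} \frac{1}{m(n - k_L)},
\]
and comparing them term by term shows that $m \ge n$ provided one can establish the single structural inequality $r_p \ge k_L$ for every non-incident pair: indeed $r_p \ge k_L$ together with $m \le n$ yields $n r_p \ge m k_L$, hence $m(n - k_L) \ge n(m - r_p)$ and a term-wise domination of the right sum by the left. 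So the first step I would attempt is to prove $r_p \ge k_L$ directly from the betweenness definition.

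The obstruction --- and the reason this conjecture is hard and remains open --- is that metric lines do not form a linear space, so this inequality is exactly what fails. In the De Bruijn–Erd\H{o}s setting $r_p \ge k_L$ holds because joining $p$ to the $k_L$ points of $L$ produces $k_L$ \emph{distinct} lines, using that two points lie on a unique line and that two distinct lines meet in at most one point. Neither property survives in a metric space: the line $\ov{ab}$ generated by $a$ and $b$ need not be closed, so for distinct $q_1, q_2 \in L$ one may have $\ov{pq_1} = \ov{pq_2}$, and two genuinely different lines can share two or more points. The natural injection from the points of $L$ to the lines through $p$ therefore collapses, and the double count yields nothing. Quantifying this collapse --- bounding how many pairs of points can generate a single line, and how severely two lines can overlap --- is the main difficulty, and I expect it to be the crux of the whole argument.

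To attack it I would split on the degree of overlap. When every two distinct lines meet in at most one point the space is a genuine linear space and the count above goes through verbatim, so all the real work lies in the complementary regime of heavily overlapping lines. There a point $c \in \ov{ab}$ satisfies one of three additive distance equations, so a large family of mutually overlapping lines forces many simultaneous additive relations among the distances; I would try to convert these relations either into a bound on the number of distinct distances (feeding the weaker but unconditional line bounds the paper develops) or into a local growth statement guaranteeing that each heavily shared point set spawns enough new lines elsewhere to compensate. I should be candid that no such dichotomy is currently known to close the gap to the full linear bound --- this is precisely why only polynomial lower bounds such as $\Omega(\sqrt{n})$ are available --- and so the final reconciliation of the two regimes is where I expect the argument, as stated, to stall.
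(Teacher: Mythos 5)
You have not proved the statement, and you say as much yourself in your final paragraph; to put this in context, the statement is Conjecture~\ref{conj.cc} of the paper (the Chen--Chv\'atal conjecture), which the paper does not prove either --- it remains open, and the paper only establishes partial results such as the $\Omega(\sqrt{n})$ bound of Theorem~\ref{thm.main}. Your setup is the standard double count behind the De Bruijn--Erd\H{o}s theorem, your two identities are correct, and so is your identification of the crux: the inequality $r_p \ge k_L$ for non-incident pairs is exactly what fails in a metric space, since for distinct $q_1, q_2 \in L$ one can have $\ov{pq_1} = \ov{pq_2}$, and two distinct lines can share many points. But naming the obstruction is not the same as overcoming it. The concrete gap is your third paragraph: the proposed dichotomy (either the line family behaves like a linear space, in which case the count goes through, or there is heavy overlap, which you hope to convert into a bound on the number of distinct distances or into a local surplus of new lines) is never made precise, no lemma is supplied that quantifies ``heavy overlap'' or produces the compensating lines, and the reconciliation step you defer is precisely the open problem. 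As written, this is a plan of attack, not an argument.

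For comparison, the paper's actual progress on this question proceeds by a different route that avoids incidence counting altogether: fix a point $a$, order the remaining points by $x \prc y$ iff $[axy]$, and apply Dilworth's theorem to this poset. A long chain is a geodesic, and Lemma~\ref{lem.long_path} shows that a geodesic sequence of length $k$ yields $k$ distinct lines when no line is universal; a large antichain either yields many distinct lines of the form $\ov{ay}$ directly, or else forces a large set $A$ containing no collinear triple, whence all $\binom{|A|}{2}$ lines $\ov{xy}$ with $x, y \in A$ are pairwise distinct. Balancing the chain and antichain cases gives only $\Omega(\sqrt{n})$ for metric spaces (Theorem~\ref{thm.main}), and $\Omega(n^{2/5})$ for pseudometric betweennesses (Theorem~\ref{thm.pmb}); that is the current state of the art. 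Your observation that the full linear bound would follow from any usable substitute for $r_p \ge k_L$ is accurate, but neither your proposal nor the paper supplies one.
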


In this paper, we make progress towards this conjecture by proving the first known polynomial lower bound on the number of lines in a finite metric space. We prove that (see Theorem \ref{thm.main})
any finite metric space on $n$ points ($n \geq 2$) with no universal line has at least $\left(\frac{1}{\sqrt{2}} - o(1)\right)\sqrt{n}$ distinct lines.

We also show that we can improve this bound for some particular types of metric spaces: those with a constant number of distinct distances, those induced by graphs and those induced by graphs of constant diameter.
We now detail these new bounds, placing them in the context of existing results.

Chiniforooshan and Chv\' atal \cite{Chini_Chvatal} proved that an $n$-point metric space where all distances are in the set $\{0, 1, 2\}$ has $\Omega(n^{4/3})$ lines.
This was also the first type of metric space for which Conjecture~\ref{conj.cc} was shown to be true:

\begin{thm} \label{thm.12metric} (Chv\'{a}tal \cite{Chvatal})
Any metric space on $n$-points, for $n \geq 2$, in which all positive distances are either one or two, either has at least $n$ lines or has a universal line.
\end{thm}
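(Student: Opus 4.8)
The plan is to translate the problem into graph theory and then induct on $n$. Since all positive distances lie in $\{1,2\}$, and any assignment of such distances automatically satisfies the triangle inequality, the space is encoded by the graph $G$ on $V$ whose edges are the pairs at distance $1$. The betweenness relation then becomes transparent: $[acb]$ holds exactly when $\rho(a,b)=2$ and $c$ is adjacent to both $a$ and $b$. Writing $N(x)$ for the open neighbourhood of $x$ in $G$, a direct check gives closed formulas for the two kinds of lines: for an edge $ab$ one gets $\ov{ab}=N(a)\,\triangle\,N(b)$ (the symmetric difference, which automatically contains $a$ and $b$), while for a non-edge $ab$ one gets $\ov{ab}=\{a,b\}\cup(N(a)\cap N(b))$. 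In these terms a universal line is an edge with $N(a)\triangle N(b)=V$, or a non-edge whose two endpoints are both adjacent to all remaining vertices. Establishing these formulas is the first, and entirely routine, step.

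Before the induction I would dispose of degenerate configurations that already exhibit the extremal behaviour and that guide the general argument. If $G$ has an isolated vertex $v$, then every line $\ov{vx}$ equals $\{v,x\}$, giving $n-1$ distinct lines through $v$; any single edge of $G$ then produces a further line avoiding $v$, and if $G$ has no edge at all then all $\binom{n}{2}\ge n$ pairs are distinct two-point lines, so the bound holds outright. The complete graph, the empty graph, and the presence of a universal vertex are similar warm-ups. These cases also point to the mechanism behind the main step: a pair of \emph{twins} (vertices $u,v$ with $N(u)=N(v)$, or with $N[u]=N[v]$) lie on exactly the same lines and generate exactly the same lines, so deleting one of them should cost at most a controlled number of lines.

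The heart of the proof is an induction on $n$ obtained by deleting a single point $p$ and using the following restriction principle: for distinct $a,b\in V'=V\sm\{p\}$, the betweenness among $a,b$ and a third point of $V'$ is unaffected by the deletion, so $\ov{ab}^{\,V}\cap V'=\ov{ab}^{\,V'}$. Hence $\ell\mapsto\ell\sm\{p\}$ maps the lines of $V$ onto the lines of $V'$, and each line $\ell'$ of $V'$ has at most two preimages, namely $\ell'$ and $\ell'\cup\{p\}$. Writing $\cl$ and $\cl'$ for the two sets of lines, this gives $|\cl|=|\cl'|+d$, where $d$ counts those lines $\ell'$ of $V'$ for which \emph{both} $\ell'$ and $\ell'\cup\{p\}$ occur as lines of $V$. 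If $V'$ has no universal line, the induction hypothesis gives $|\cl'|\ge n-1$, so it suffices to choose $p$ so that $d\ge 1$, i.e.\ to locate one line of $V'$ that is generated in $V$ both by a pair capturing $p$ and by a pair missing $p$. If instead $V'$ already carries a universal line, the induction hypothesis does not apply, and one argues directly: a universal line forces a very rigid neighbourhood pattern on $V'$, which together with the standing assumption that $V$ itself has \emph{no} universal line can be turned into $n$ explicit lines.

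The main obstacle is precisely this gain of a single line in the inductive step — guaranteeing $d\ge 1$ for a suitably chosen deleted vertex $p$ — together with the parallel direct count in the case where the reduced space is spanned by a universal line. This is the same tightness that governs the equality analysis of the De~Bruijn--Erd\H{o}s theorem, where projective-plane-like configurations meet the bound exactly; here the twin reduction described above is the natural tool for forcing the extra line, by exhibiting two distinct generating pairs that leave the same trace on $V'$ but differ in their incidence with $p$.
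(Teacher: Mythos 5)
First, a point of orientation: the paper you were given does not prove this theorem at all --- it is quoted as Chv\'atal's result \cite{Chvatal} and used as a black box (e.g.\ inside the proof of Theorem~\ref{thm.bounded_dists}) --- so your proposal must be judged on its own terms, and it has genuine gaps. A technical one first: your ``restriction principle'' is misstated. It is true that $\ov{ab}^{\,V} \cap V' = \ov{ab}^{\,V'}$ for $a,b \in V' = V \sm \{p\}$, but it does \emph{not} follow that $\ell \mapsto \ell \sm \{p\}$ maps the lines of $V$ onto (or even into) the lines of $V'$: a line of $V$ all of whose generating pairs contain $p$ need not restrict to a line of $V'$. Concretely, if $\rho(p,z)=2$ and $N(p) \cap N(z) = \emptyset$, then $\ov{pz} = \{p,z\}$ and its trace $\{z\}$ on $V'$ is not a line of $V'$ at all. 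Hence your identity $|\cl| = |\cl'| + d$ is false; only the inequality $|\cl| \ge |\cl'| + d$ survives (the lines of $V$ generated by pairs inside $V'$ do surject onto the lines of $V'$, with fibres of size at most two), which is fortunately the direction you need, so this part is repairable.

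The fatal gap is that the two steps you yourself label ``the main obstacle'' --- (i) producing, for some choice of $p$, either a line of $V'$ with two distinct lifts or a line of $V$ that does not restrict to a line of $V'$, and (ii) constructing $n$ lines directly when $V \sm \{p\}$ has a universal line but $V$ does not --- are not loose ends: they are the entire content of the theorem. Everything you actually establish (the dictionary $\ov{ab} = N(a) \,\triangle\, N(b)$ for edges and $\ov{ab} = \{a,b\} \cup (N(a) \cap N(b))$ for non-edges, the isolated-vertex and edgeless cases) is routine, and the argument then stops exactly where the difficulty begins, with no mechanism offered for (i) or (ii). The one tool you do propose, twin vertices, does not work as claimed: a graph need not contain twins at all, and twins do not ``generate exactly the same lines'' --- if $N(u)=N(v)$ with $u,v$ non-adjacent, then for any $x$ not adjacent to them, $u \in \ov{ux}$ but $u \notin \ov{vx} = \{v,x\} \cup (N(v) \cap N(x))$, so $\ov{ux} \neq \ov{vx}$ (a symmetric failure occurs for closed twins and edge-lines). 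Chv\'atal's actual proof in \cite{Chvatal} is a paper-length analysis built on the same graph-theoretic dictionary you set up; your proposal reproduces that setup and an inductive frame, but leaves the core unproved, so it cannot be counted as a proof.
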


Extending the result of  Chiniforooshan and Chv\' atal, we prove that an $n$-point metric space where all distances are in  $\{0, 1, 2, 3\}$ has $\Omega(n^{4/3})$ lines (see Theorem~\ref{thm.3metric}).
More generally, we prove that any $n$-point metric space with a constant number of distinct distances has $\Omega(n)$ lines (see Theorem~\ref{thm.bounded_dists}) and we conjecture the following:

\begin{conj}\label{const.dist}
Any $n$-point metric space $(n \geq 2)$ with a constant number of distinct distances has $\Omega(n^{4/3})$ lines.
\end{conj}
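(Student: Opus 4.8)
The plan is to extend the extremal counting that already gives the $\Omega(n^{4/3})$ bound for $\{0,1,2\}$ and $\{0,1,2,3\}$ spaces to an arbitrary fixed number $w$ of distinct distances $D=\{d_1<\dots<d_w\}$. Let $\mathcal L$ be the set of lines, $L=|\mathcal L|$, and for $\ell\in\mathcal L$ write $k_\ell=|\ell|$. Two quantities drive everything. On one side, every pair $\{a,b\}$ of distinct points lies on $\ov{ab}$, so $\sum_{\ell\in\mathcal L}\binom{k_\ell}{2}\ge\binom n2$, forcing $\sum_\ell k_\ell^2=\Omega(n^2)$; few lines therefore means some lines are very large. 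On the other side, boundedness of $w$ rigidly constrains what a line looks like: for each $i$ let $G_i$ be the graph on $V$ whose edges are the pairs at distance exactly $d_i$, with neighbourhoods $N_i(\cdot)$. If $\rho(a,b)=d_k$, then $c\in\ov{ab}$ exactly when $(\rho(a,c),\rho(c,b))$ realises one of the three betweenness relations $d_k=d_i+d_j$, $d_i=d_k+d_j$, or $d_j=d_k+d_i$; hence $\ov{ab}$ is a union of at most $3w=O(w)$ ``shell pieces'' of the form $N_i(a)\cap N_j(b)$. For $\{0,1,2\}$ this recovers the familiar description of each line as a common neighbourhood ($N_1(a)\cap N_1(b)$) or a neighbourhood symmetric difference.

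First I would use this description to convert scarcity of lines into a dense repeated substructure in the layer graphs $G_i$. If $L$ is small then, by the covering bound, many pairs $\{a,b\}$ generate the same line, and the aim is to show that---piece by piece---many pairs then share the same shell intersections $N_i(a)\cap N_j(b)$. A large intersection shared by many pairs is exactly a large complete bipartite configuration spanning the layers: the shared endpoints on one side are all joined, in the appropriate $G_i$, to the common intersection on the other. The engine of the proof is then a K\H{o}v\'{a}ri--S\'{o}s--Tur\'{a}n bound applied to the $G_i$: such bicliques cannot occur too often unless the number of distinct shell patterns, and hence the number of lines, is already $\Omega(n^{4/3})$. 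Balancing the $\Omega(n^2)$ covering lower bound against this Tur\'an-type ceiling, exactly as in the $w\le 3$ arguments, is what should yield the desired exponent.

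The hard part will be the rigidity step for $w\ge 4$, and this is where I expect the real obstacle to lie. When there are at most three distances, every admissible relation $d_k=d_i+d_j$ has a unique solution, so a line is built from a \emph{determined} list of shell pieces and two pairs generating the same line must agree layer by layer---this is precisely what lets the biclique extraction go through. As soon as a fourth distance is allowed, a single value can be additively represented in genuinely different ways, the prototype being $4=1+3=2+2$: now $\ov{ab}$ mixes points drawn from $N_1(a)\cap N_3(b)$, $N_2(a)\cap N_2(b)$, and $N_3(a)\cap N_1(b)$, and equality of two lines as point sets no longer pins down any individual shell intersection. The loss of this layer-by-layer rigidity is what breaks the clean transfer of line-scarcity into bicliques, and controlling these overlapping representations---bounding, using only the constancy of $w$ and the triangle inequality, how often distinct lines can share large point sets without coinciding---is the crux that keeps the statement a conjecture rather than a theorem.
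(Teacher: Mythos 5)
You have not proved this statement, and the paper does not prove it either: it is stated there as Conjecture~1.4 and left open. What the paper actually establishes is strictly weaker --- a linear bound of $n/(5|W|)$ lines for any metric space with distance set $W$ (Theorem~4.3), plus the $\Omega(n^{4/3})$ bound in two special cases, namely distances in $\{0,1,2,3\}$ (Theorem~5.3) and graph metrics of bounded diameter (Theorem~7.4). So there is no proof to compare yours against, and the honest verdict is that your proposal is also not a proof: you concede this yourself when you say the rigidity step for $w\ge 4$ ``keeps the statement a conjecture rather than a theorem.'' That concession is exactly right, and it is the whole problem. The parts of your sketch that are solid --- the covering bound $\sum_\ell \binom{k_\ell}{2}\ge\binom{n}{2}$ and the decomposition of $\ov{ab}$ into at most $3w$ shell pieces $N_i(a)\cap N_j(b)$ --- are the routine parts; the step converting ``many pairs generate the same line'' into ``many pairs share the same shell intersections'' is the entire difficulty, and nothing in your plan bounds how two coinciding lines can redistribute their points among shells once a distance has more than one additive representation (your $4=1+3=2+2$ example).

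One secondary inaccuracy: the template you propose to extend (``layer-by-layer rigidity plus K\H{o}v\'ari--S\'os--Tur\'an, exactly as in the $w\le 3$ arguments'') is not how the paper handles $w\le 3$. Its Theorem~5.3 splits into cases according to which distance class carries $\binom{n}{2}/3$ pairs: the distance-$1$-rich case is reduced to the $\{0,1,2\}$ theorem inside the neighbourhood of a high-degree point, and the distance-$2$ and distance-$3$ cases use Lemma~4.3 (a point cannot have two collinear partners at the same distance $\delta>D/2$, so each generator graph $H_\delta(L)$ is a matching) followed by a parity argument on the matching endpoints to produce $\binom{s}{2}$ distinct lines. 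No biclique extraction appears, and equality of lines is never decomposed shell by shell. So if you want to attack the conjecture, the matching-plus-parity mechanism of the paper is at least as natural a starting point as KST; but either way, the obstruction you identified --- non-uniqueness of additive representations for $w\ge 4$ --- is precisely where both approaches currently stop.
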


Metric spaces can be viewed as weighted connected graphs in which the edges are weighted by positive real numbers and the distance between any two vertices is defined to be the smallest total weight of a path between them. A special case is when all the edges are unit weighted, in which case we have \emph{metric spaces induced by connected graphs}, or simply called {\em graph metrics}.

We prove that, unless there is a universal line, the number of lines in an $n$-point graph metric is $\Omega(n^{4/7})$ (Theorem \ref{thm.graphs}), improving the $\Omega(n^{2/7})$ bound proved in \cite{Chini_Chvatal}.
We also prove that metric spaces induced by constant diameter graphs have $\Omega(n^{4/3})$ lines (see Theorem \ref{thm.d_graphs}), verifying Conjecture~\ref{const.dist} in the case of graph metrics.

\medskip

Conjecture~\ref{conj.cc} has also been proved for the following particular classes of metric spaces:
\begin{itemize}
\item metric spaces consisting of $n$ points in general position in the plane with the $L_1$ metric~(\cite{KP}),
\item metric spaces induced by chordal graphs~(\cite{BBCCCCFZ}),
\item  metric spaces induced by distance-hereditary graphs~(\cite{AK}), and
\item  graph metrics where no line is a proper subset of another~(\cite{CHMY}).
\end{itemize}

Conjecture~\ref{conj.cc} may be true in a more general setting than metric spaces, as we now explain.
Let $\mc B$ be a ternary relation on a set $V$ and write $[abc]$ to mean that $abc \in \mc B$.
The relation $\mc B$ is called a \emph{metric betweenness} if there is a metric $\rho$ on $V$ such that $[abc]$ if and only if
\[
a, b, c \text{ are distinct  and } \rho(a,b) + \rho(b,c) = \rho(a,c).
\]
Menger~\cite{menger} seems to have been the first to study  metric betweenness. He proved that, in addition to the obvious properties:
\begin{itemize}
\item[(M0)]  if $[abc]$, then $a$, $b$, $c$ are distinct,
\item[(M1)] if $[abc]$, then $[cba]$,
\item[(M2)] if $[abc]$, then $[bac]$ does not hold,
\end{itemize}
every metric betweenness must satisfy the following property, called \emph{inner transitivity}:
\begin{itemize}
\item[(M3)] if $[abc]$ and $[acd]$, then $[abd]$ and $[bcd]$.
\end{itemize}

A ternary relation $\mc B$ on a set $V$ is a \emph{pseudometric betweenness} if it satisfies (M0), (M1), (M2) and (M3).
We refer to the elements of $V$ as \emph{points of the pseudometric betweenness}.
Since pseudometric betweennesses are not all metric (see~\cite{Chvatal2} for more on this subject), they are a proper generalization of metric betweennesses.

We can define the lines of a pseudometric betweenness in the same way as for metric spaces.
If $\mc B$ is a pseudometric betweenness on a set $V$, a set of three points $\{a, b, c\}$ in $V$ is called \emph{collinear} if one of $[abc], [bca]$ or $[cab]$ holds.
For distinct points $a, b \in V$, the \emph{line of $\mc B$ generated by $a$ and $b$} is defined to be
\[
 \ov{ab} = \{a, b\} \cup \{ c : [abc], [acb], \text{ or } [cab] \}.
\]
In~\cite{BBC1}, it is asked whether or not Conjecture~\ref{conj.cc} holds if we replace `metric space' with `pseudometric betweenness'.
In this paper, we prove that an $n$-point pseudometric betweenness either has a universal line or has at least $(2^{-1/5}-o(1)) n^{2/5}$ lines (see Theorem \ref{thm.pmb}).

In \cite{CC}, lines in an even more general setting were considered --- given a 3-uniform hypergraph $\cH$, and two vertices $a$ and $b$,
the line $\ov{ab}$ is defined to be the union of $\{a, b\}$ and all vertices $c$ such that $\{a, b, c\}$ is a hyperedge in $\cH$.
It is clear that the definitions conform for any metric space $(V, \rho)$ and the $3$-uniform hypergraph on $V$ whose hyperedges are the collinear triples of $(V, \rho)$ (actually, it is easy to observe that studying lines in $3$-uniform hypergraphs is equivalent to studying lines induced by ternary relations satisfying (M0)).
In \cite{CC} it was proved that for any $3$-uniform hypergraph on $n$ vertices without a universal line, there are at least $\log_2 n$ lines, and examples were given with as few as $\exp(O(\sqrt{\ln n}))$ lines (which implies that  Conjecture~\ref{conj.cc} is false if one replaces `metric space' by `$3$-uniform hypergraph' in its statement). The lower bound was recently improved to $(2 - o(1)) \log_2 n$ in \cite{ABCCCM}.
Even for finite metric spaces, which are less general than $3$-uniform hypergraphs, no better lower bound than this on the number of lines has previously been published.
Since $\exp(O(\sqrt{\ln n}))$ is asymptotically smaller than any polynomial in $n$,  our results shows that, in terms of lines as we have defined them, there are intrinsic differences between pseudometric betweennesses and arbitrary 3-uniform hypergraphs.

\medskip

We present our results from the most general setting (pseudometric betweenness) to the least general (graph metrics).
This also happens to go from the simpler proofs to the more complicated ones.
In Section~\ref{sect.pmb} we prove our lower bounds for the number of lines in a pseudometric betweenness.
In Section~\ref{sect.general} we prove our lower bounds for metric spaces.
In Section \ref{sect.few_weights}, we prove a lower bound on the number of lines in a metric space with a constant number of distinct distances.
In Section~\ref{sect.123}, we study metric spaces where all distances are in $\{0,1,2,3\}$.
Finally, in Sections~\ref{sect.pairs} and~\ref{sect.graph}, we study lines in graph metrics.

\section{The lower bound for pseudometric betweennesses}\label{sect.pmb}

Let $\mc B$ be a pseudometric betweenness on a finite set $V$.
Let $p_1, \dots, p_k$ be $k$ distinct points of $V$.
We write $[p_1p_2 \dots p_k]$ to mean that $[p_rp_sp_t]$ holds for any $1 \le r < s < t \le k$, and in this case we say that the sequence $(p_1,p_2, \dots, p_k)$ is a \emph{geodesic}.
Moreover, we call a set of points $\{p_1, p_2, ..., p_k\}$ geodesic if some permutation of it forms a geodesic sequence. We note that, on the graph metrics, our notion conforms to the usual notion of {\em graph geodesic} (\cite{H}).

We will make frequent use of the following facts, whose easy proofs we omit.

\begin{fact}\label{fact.pseudometric} Let $\mc B$ be a pseudometric betweenness on a set $V$. For distinct points $a$, $b$, $c$, $d$, and $p_1, \dots, p_k$ ($k\ge 3$),
\begin{enumerate}[(a)]

\item if $[abc]$ and $[acd]$, then $[abcd]$;

\item if $[abd]$ and $[bcd]$, then $[abcd]$;

\item if $[p_1p_2...p_k]$, then $[p_{i_1} p_{i_2} ... p_{i_t}]$ for any $1 \leq i_1 < i_2 <... < i_t \leq k$.

\end{enumerate}
Moreover, if $\mc B$ is a metric  betweenness and $(V,\rho)$ is an associated  metric space, then
\begin{enumerate}
 \item[(d)] if $[p_1p_2...p_k]$, then $\rho(p_s, p_t) = \sum_{i=s}^{t-1} \rho(p_i, p_{i+1})$ for any $1 \le s < t \le k$.
\end{enumerate}

\end{fact}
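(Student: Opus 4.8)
The plan is to dispatch the four parts separately, each using only the axioms (M0)--(M3) together with the definition of the compact notation $[p_1\cdots p_k]$ (namely that $[p_rp_sp_t]$ holds for all $1\le r<s<t\le k$). For part~(a) the hypotheses are verbatim those of the inner transitivity axiom (M3): applying (M3) to $[abc]$ and $[acd]$ yields $[abd]$ and $[bcd]$ at once. Combined with the two given relations $[abc]$ and $[acd]$, all four ordered triples on $\{a,b,c,d\}$ hold, which is exactly the assertion $[abcd]$.

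For part~(b) the idea is to reverse the given relations with the symmetry axiom (M1) so that (M3) becomes applicable with a common first point. From $[abd]$ and $[bcd]$ I get $[dba]$ and $[dcb]$ by (M1). Reading these as $[dcb]$ (playing the role $[XYZ]$ with $X=d$, $Y=c$, $Z=b$) and $[dba]$ (playing the role $[XZW]$ with $W=a$), the axiom (M3) produces $[dca]$ and $[cba]$. Reversing these once more with (M1) gives $[acd]$ and $[abc]$, and together with the two hypotheses $[abd]$ and $[bcd]$ we again obtain all four triples, i.e. $[abcd]$.

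Part~(c) is immediate from the definition: since $[p_1\cdots p_k]$ asserts that $[p_rp_sp_t]$ holds for every increasing triple of indices, any subsequence $p_{i_1},\dots,p_{i_t}$ with $i_1<\cdots<i_t$ simply inherits those of its triples, so it is itself a geodesic with no further argument needed. Finally, for the metric statement~(d) I would induct on $t-s$, the case $t-s=1$ being trivial. For the inductive step, the triple $[p_sp_{s+1}p_t]$ is one of the relations guaranteed by the hypothesis, so the metric betweenness definition gives $\rho(p_s,p_t)=\rho(p_s,p_{s+1})+\rho(p_{s+1},p_t)$; applying the induction hypothesis to the shorter geodesic $[p_{s+1}\cdots p_t]$ (a geodesic by part~(c)) rewrites the second term as $\sum_{i=s+1}^{t-1}\rho(p_i,p_{i+1})$, and combining yields the claim.

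None of these steps presents a genuine obstacle, which is why the paper can relegate them to \emph{facts}; the only place requiring any care is the index bookkeeping in part~(b), where one must correctly line up the two reversed relations with the roles of (M3) and then reverse the conclusions back with (M1).
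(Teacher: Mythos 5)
Your proposal is correct and matches the paper's intent exactly: the paper omits these proofs as easy, remarking only that (a) and (b) follow from (M1) and (M3), which is precisely the machinery you use (together with the definitional argument for (c) and the routine induction for (d)). Nothing to correct.
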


Note that (a) and (b) follow from (M1) and (M3).
We now prove our lower bound on the number of lines in any pseudometric betweenness. We start with a technical lemma.

\begin{lem}\label{lem.long_path}
Let $\mc B$ be a pseudometric betweenness on a finite set $V$, with no universal line.
If $\mc B$ admits a geodesic sequence of length $k$, then $\mc B$ has at least $k$ distinct lines.
\end{lem}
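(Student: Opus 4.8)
The plan is to exhibit $k$ pairwise distinct lines, one for each point of the geodesic, by fixing a single auxiliary point off the geodesic's span and fanning lines out to it. First I would record that the whole geodesic lies on one line: since $[p_1 p_m p_k]$ holds for every $1 < m < k$ by Fact~\ref{fact.pseudometric}(c), each $p_m$ satisfies $[p_1 p_m p_k]$, so $\{p_1, \dots, p_k\} \subseteq \ov{p_1 p_k}$. Because $\mc B$ has no universal line, $\ov{p_1 p_k} \neq V$, so I may pick a point $q \in V \setminus \ov{p_1 p_k}$; in particular $[p_1 q p_k]$ fails. The lemma then reduces to the claim that the $k$ lines $\ov{q p_1}, \ov{q p_2}, \dots, \ov{q p_k}$ are pairwise distinct, since that immediately produces $k$ distinct lines.

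To prove this injectivity I would argue by contradiction: suppose $\ov{q p_i} = \ov{q p_j}$ with $i < j$. Then $p_j \in \ov{q p_i}$, so by definition one of $[q p_i p_j]$, $[q p_j p_i]$ or $[p_i q p_j]$ holds, i.e.\ the triple $\{q, p_i, p_j\}$ is collinear. The favourable case is $[p_i q p_j]$, where $q$ lies strictly between $p_i$ and $p_j$: combining $[p_i q p_j]$ with the geodesic relations through inner transitivity (M3) forces $[p_1 q p_k]$, which says $q \in \ov{p_1 p_k}$ and contradicts the choice of $q$. When $i=1$ this is a single application of (M3) with $a=p_1,\,b=q,\,c=p_j,\,d=p_k$, using $[p_1 q p_j]$ and $[p_1 p_j p_k]$; for general $i$ I would first derive $[p_i q p_k]$ from $[p_i q p_j]$ and $[p_i p_j p_k]$, and then obtain $[p_1 q p_k]$ by a second application of (M3) to $[p_k q p_i]$ and $[p_k p_i p_1]$ together with (M1).

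The hard part will be the two remaining cases, $[q p_i p_j]$ and $[q p_j p_i]$, in which $q$ sits at an \emph{end} of the collinear triple rather than between $p_i$ and $p_j$. Here inner transitivity does not propagate the betweenness past $q$: indeed a $4$-cycle shows that $[q p_i p_j]$ and $[p_i p_j p_k]$ need not imply $[q p_i p_k]$, so one cannot simply chase the relation along the geodesic to contradict $q \notin \ov{p_1 p_k}$. The mutual-membership information alone is too weak, because in these configurations $\ov{q p_i}$ and $\ov{q p_j}$ genuinely share the triple $\{q, p_i, p_j\}$; what must rescue the argument is that the \emph{full} sets are assumed equal, and small cases such as the $5$-cycle suggest that the two lines are in fact separated by a point off the geodesic. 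The crux, then, is to locate such a witness --- a point $x$ lying in exactly one of $\ov{q p_i}$ and $\ov{q p_j}$ --- by feeding the assumed betweenness back into (M3) and (M2). I expect this to be the one place where the no-universal-line hypothesis is used in an essential, global way.
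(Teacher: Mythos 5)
Your reduction has a genuine gap, and you have put your finger on exactly where it is: the cases $[qp_ip_j]$ and $[qp_jp_i]$ are never handled, and nothing in your proposal supplies the missing ``witness'' point. Your treatment of the middle case $[p_iqp_j]$ is correct, but the outer cases are not a technicality that more chasing of (M3) will fix. Consider the graph metric on $\{p_1,p_2,p_3,p_4,q\}$ where $p_1p_2p_3p_4$ is a path and $q$ is adjacent to both $p_2$ and $p_3$. Then $(p_1,p_2,p_3,p_4)$ is a geodesic, $q \notin \ov{p_1p_4}$, and yet $\ov{qp_1} = \ov{qp_2} = \{q,p_1,p_2\}$, the shared collinear triple being $[qp_2p_1]$ --- precisely your case $[qp_jp_i]$. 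This space does have a universal line (namely $\ov{p_1p_2}$), so it does not contradict the lemma; but it shows that your intermediate claim --- that for \emph{any} $q \notin \ov{p_1p_k}$ the fan $\ov{qp_1},\dots,\ov{qp_k}$ consists of $k$ distinct lines --- cannot be deduced from the betweenness data you are using, since all of it is satisfied in this example. Any repair must invoke the no-universal-line hypothesis globally in a way you only conjecture exists, and it is not even clear that your claim is true as stated.

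The paper sidesteps the difficulty by not using a single auxiliary point. For each $i = 1,\dots,k-1$ it picks $q_i \notin \ov{p_ip_{i+1}}$ --- non-collinearity with the \emph{consecutive} pair, a much stronger local condition than $q \notin \ov{p_1p_k}$ --- and proves that the $k$ lines $\ov{p_1q_1},\dots,\ov{p_{k-1}q_{k-1}},\ov{p_1p_k}$ are pairwise distinct. With this choice your outer cases evaporate: if $[p_jp_iq_i]$ or $[p_ip_jq_i]$ held, then inserting $p_{i+1}$ via Fact~\ref{fact.pseudometric}(a) would give $q_i \in \ov{p_ip_{i+1}}$, contradicting the choice of $q_i$; and in the remaining case $[p_iq_ip_j]$, appending $p_{j+1}$ gives $[p_iq_ip_jp_{j+1}]$, so $p_{j+1} \in \ov{p_iq_i}$ while $p_{j+1} \notin \ov{p_jq_j}$, which separates the two lines. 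Note that this last step needs the index $j+1 \le k$ to exist, which is exactly why the paper takes only $k-1$ fan lines and uses $\ov{p_1p_k}$ itself (distinguished from each $\ov{p_iq_i}$ because $p_{i+1} \notin \ov{p_iq_i}$) as the $k$-th line. If you wish to salvage the single-point fan, you would first have to settle whether the claim in your final paragraph is a theorem at all; the paper's argument never needs it.
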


\begin{proof}
Let $p_1, \dots, p_k$ be $k$ points of $V$ such that $[p_1\dots p_k]$.
For $i=1, \dots, k-1$, let $q_i$ be a point not in $\ov{p_{i}p_{i+1}}$ (such a point exists since $\mc B$ has no universal line).
Note that this means that  $\{q_i, p_i, p_{i+1}\}$ is not collinear.
We are going to prove that the $k$ lines $\ov{p_1q_1},\,  \ov{p_2q_2},  \dots  , \, \ov{p_{k-1}q_{k-1}}$ and $\ov{p_1p_k}$ are pairwise distinct.

Note that $\{p_1, \dots, p_k\} \subseteq \ov{p_1p_k}$. For each $i = 1, \dots, k-1$, we have $p_{i+1} \notin \ov{p_iq_i}$, so $\ov{p_iq_i} \neq \ov{p_1p_k}$.
So it suffices to prove that $\ov{p_iq_i} \neq \ov{p_jq_j}$ where $1 \le i <j \le k-1$.

If $ p_j \notin  \ov{p_iq_i}$, then $\ov{p_iq_i} \neq \ov{p_jq_j}$ and we are done.
In particular, if $j = i+1$, then $p_j \notin \ov{p_iq_i}$.
So we may assume that $j > i+1$ and $p_j \in \ov{p_iq_i}$ which means that  $\{p_j, p_i, q_i\}$ is collinear.
If $[p_jp_iq_i]$, then since $[p_j p_{i+1} p_i]$, we have $[p_j p_{i+1} p_i q_i]$ and thus $q_{i} \in \ov{p_i p_{i+1}}$, a contradiction.
Similarly, if $[p_ip_jq_i]$, then since $[p_i p_{i+1} p_j]$, we have $[p_i p_{i+1} p_j q_i]$ and so $q_i \in \ov{p_i p_{i+1}}$, a contradiction.
Hence we may assume $[p_iq_ip_j]$.
But then since $[p_i p_j p_{j+1}]$, we have $[p_iq_ip_jp_{j+1}]$  and thus $p_{j+1} \in \ov{p_iq_i}$. Therefore, the fact that $p_{j+1} \notin \ov{p_jq_j}$ implies that $\ov{p_iq_i} \neq \ov{p_jq_j}$.
\end{proof}

\begin{thm}\label{thm.pmb}
If $\mc B$ is a pseudometric betweenness on an $n$-point set $V$ ($n \geq 2$), then either $\mc B$ has a universal line or $\mc B$ has at least $(2^{-1/5}-o(1))n^{2/5}$ lines.
\end{thm}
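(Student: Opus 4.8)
The plan is to prove the contrapositive estimate: writing $\ell$ for the number of lines of $\mc B$, I will show that $n \le \sqrt{2}\,\ell^{5/2}(1+o(1))$, which rearranges to $\ell \ge (2^{-1/5}-o(1))n^{2/5}$. The starting point is Lemma~\ref{lem.long_path}: since $\mc B$ has no universal line, every geodesic has at most $\ell$ points, so in particular the longest geodesic has some length $k \le \ell$. The heart of the argument is then to bound $n$ in terms of $k$, $\ell$ and one further quantity, after which I substitute $k \le \ell$.

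First I would fix an arbitrary point $v \in V$ and define a relation $\preceq_v$ on $V$ by setting $c \preceq_v d$ whenever $c \in \{v, d\}$ or $[vcd]$. Using (M1), (M2) and inner transitivity (M3) --- concretely Fact~\ref{fact.pseudometric}(a) to chain betweennesses and (M2) to rule out reversals --- I would check that $\preceq_v$ is a partial order with minimum $v$. The key observation is that a chain $v \prec c_1 \prec \cdots \prec c_t$ is exactly a geodesic $(v, c_1, \ldots, c_t)$: given $[vc_ic_j]$ and $[vc_jc_l]$, Fact~\ref{fact.pseudometric}(a) yields $[vc_ic_jc_l]$ and hence $[c_ic_jc_l]$. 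Consequently every chain has at most $k$ elements, so by Mirsky's theorem $V$ decomposes into at most $k$ antichains of $\preceq_v$.

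Next I would bound the size of a single antichain $A$. Assigning each $c \in A$ to the line $\ov{vc}$, any two points $c, d \in A$ lying on a common line through $v$ must be collinear with $v$; since $A$ is an antichain, neither $[vcd]$ nor $[vdc]$ holds, forcing $[cvd]$, i.e. $v$ lies strictly between them. Thus $A$ splits, according to the (at most $\ell$) lines through $v$, into groups that are \emph{pairwise $v$-separated}, meaning $[xvy]$ for every pair in the group. The crucial lemma --- and the step I expect to be the main obstacle --- is that on any pairwise $v$-separated set $T$ the $\binom{|T|}{2}$ lines $\ov{xy}$ with $x, y \in T$ are pairwise distinct. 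I would prove this by showing that no three points $x, y, z \in T$ are collinear: if, say, $[xzy]$ held, then $[xvz]$ together with $[xzy]$ gives (Fact~\ref{fact.pseudometric}(a)) the geodesic $[xvzy]$, whence $[vzy]$, contradicting $[yvz]$ by (M2); the cases $[xyz]$ and $[zxy]$ are symmetric. Non-collinearity of triples immediately makes $\{x,y\} \mapsto \ov{xy}$ injective on $T$, so $\binom{|T|}{2} \le \ell$ and hence $|T| \le \sqrt{2\ell}+1$.

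Combining the three steps gives $|A| \le \ell(\sqrt{2\ell}+1)$ for every antichain and therefore $n \le k\,\ell\,(\sqrt{2\ell}+1)$. Finally, substituting $k \le \ell$ yields $n \le \sqrt{2}\,\ell^{5/2}(1+o(1))$, which is the desired bound. The only genuinely delicate point is the pairwise $v$-separated lemma, where one must extract enough rigidity from the betweenness axioms alone, since no distances are available; everything else is bookkeeping built on Lemma~\ref{lem.long_path}, Mirsky's theorem and Fact~\ref{fact.pseudometric}.
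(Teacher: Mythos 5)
Your proposal is correct and is essentially the paper's own proof: you fix the same apex point, define the identical poset ($x \preceq y$ iff $[vxy]$), use the same identification of chains with geodesics to invoke Lemma~\ref{lem.long_path}, and use the same key observation that antichain points lying on a common line through $v$ are pairwise $v$-separated and hence contain no collinear triple, so their pairs generate distinct lines. The only difference is bookkeeping: you argue in contrapositive via Mirsky's antichain-decomposition theorem and multiply three bounds (at most $\ell$ antichains, at most $\ell$ line-groups per antichain, each group of size at most $\sqrt{2\ell}+1$), whereas the paper uses the Dilworth-type chain/antichain dichotomy plus a single pigeonhole step to exhibit the lines directly --- dual assemblies of the same ingredients, yielding the same constant $2^{-1/5}$.
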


\begin{proof}
Let $a$ be a point of $V$.
Define a binary relation $\prc$  on $V\sm \{a\}$ as follows: for any $x, y \in V\sm \{a\}$, $x \prc y$ if $[axy]$ or $x = y$. Then $\mc P = (V \sm \{a\}, \prc)$ is a poset; this follows easily from inner transitivity.
The well-known Dilworth's theorem \cite{Dilworth} implies that $\mc P$ either has a chain of size at least $(2^{-1/5} - o(1))n^{2/5}$ or an antichain of size at least $2^{1/5}n^{3/5}$.

If $\mc P$ has a chain $x_1 \prc x_2 \prc \dots \prc x_t$ where $t\ge (2^{-1/5} - o(1))n^{2/5}$, then we have $[x_1x_2 \dots x_t]$ by inner transitivity, so we are done by Lemma~\ref{lem.long_path}.
So we may assume that $\mc P$ admits an antichain $Y=\{y_1, \dots, y_k\}$ where $k \geq 2^{1/5} n^{3/5}$.
Consider the set of lines $\{ \ov{ay} : y \in Y \}$. If it has more than $2^{-1/5}n^{2/5}$ distinct elements, then we are done.
Otherwise there is a set $A \subseteq Y$ such that $|A| \geq 2^{2/5} n^{1/5}$ and such that $\ov{ax}$ is the same line for every $x \in A$.

We claim that $A$ contains no collinear triples. Let $u,v,w \in A$ and assume that $\{u, v, w\}$ is collinear. Without loss of generality, we may assume that $[uvw]$.
Since $\ov{au} = \ov{av}$, we have $v \in \ov{au}$ so $\{a, u, v\}$ is collinear. As $\{u, v\}$ is contained in the antichain $Y$, we have $[uav]$. Thus we have $[avw]$ by inner transitivity, a contradiciton to the fact that $\{v, w\}$ is contained in the antichain $Y$.
It follows that no three distinct points in $A$ are collinear, so for every $x, y \in A$ we have $\ov{xy} \cap A = \{x, y\}$.
As any two points of $A$ define a distinct line, this gives $\binom{\lceil 2^{2/5} n^{1/5} \rceil}{2} \geq (2^{-1/5}-o(1))n^{2/5}$ lines.
\end{proof}

\section{The lower bound for metric spaces}\label{sect.general}

We now prove our lower bound on the number of lines in any finite metric space.
The proof is very similar to the proof of Theorem~\ref{thm.pmb}; but by exploiting the properties of metric spaces we get a stronger bound than we got for pseudometric betweennesses.
The \emph{diameter} of a finite metric space is the maximum distance between any two of its points.

\begin{thm}\label{thm.main}
Any finite metric space on $n$ points ($n \geq 2$) with no universal line has at least $\left(\frac{1}{\sqrt{2}} - o(1)\right)\sqrt{n}$ distinct lines.
\end{thm}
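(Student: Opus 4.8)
The plan is to reuse the skeleton of the proof of Theorem~\ref{thm.pmb}: fix a point $a \in V$, form the poset $\mc P = (V\sm\{a\}, \prc)$ in which $x \prc y$ means $[axy]$ or $x=y$, and split according to Dilworth's theorem. If $\mc P$ has a long chain $x_1 \prc \dots \prc x_t$, then inner transitivity turns it into a geodesic $[x_1\dots x_t]$ and Lemma~\ref{lem.long_path} already supplies $t$ distinct lines, exactly as in the pseudometric case. Consequently the whole improvement must be squeezed out of the antichain case: where a pseudometric betweenness only lets us extract $\Omega(|Y|^{2/3})$ lines from an antichain $Y$, a genuine metric — with real-valued distances and the triangle inequality at our disposal — should allow $\Omega(|Y|)$ lines. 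The two-way balance then closes the bound: thresholding the chain length at $(\tfrac{1}{\sqrt2}-o(1))\sqrt n$ forces, by Dilworth, an antichain of size $(\sqrt2-o(1))\sqrt n$, and a linear yield of $\tfrac12|Y|$ lines from it gives precisely $(\tfrac1{\sqrt2}-o(1))\sqrt n$.

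For the antichain $Y$ I would partition it according to the line $\ov{ay}$. If $y,y'$ fall in the same part then $\ov{ay}=\ov{ay'}$, so $\{a,y,y'\}$ is collinear, and since $Y$ is an antichain this forces $[yay']$, i.e. $\rho(y,y')=\rho(a,y)+\rho(a,y')$; thus each part is a ``star'' centred at $a$. As in Theorem~\ref{thm.pmb}, a star contains no collinear triple, so a part of size $s$ yields $\binom{s}{2}$ distinct lines, \emph{all passing through $a$}. If instead $y,y'$ lie in different parts then $\{a,y,y'\}$ is not collinear, so $a\notin\ov{yy'}$; hence the cross-part pairs produce lines \emph{avoiding} $a$, a family automatically disjoint from the previous one. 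Together with the $L$ lines $\ov{ay}$ themselves (one per part), this neatly separates the count into a through-$a$ contribution and an avoiding-$a$ contribution.

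The step I expect to be the main obstacle is showing that these families together number $\Omega(|Y|)$. Counting only the through-$a$ lines reproduces the pseudometric situation: balancing the number of parts $L$ against the size of the largest part gives just $\Omega(|Y|^{2/3})$, which would propagate back to the weaker $n^{2/5}$ bound. The linear gain therefore has to come from the avoiding-$a$ lines, and here the genuine difficulty is that an antichain in a metric space may well contain collinear triples (for instance the points of a Euclidean line seen from an external $a$), so a single line can swallow many cross-part pairs and the distinct-line count can collapse. This is exactly where I expect the metric, rather than mere betweenness, to be indispensable: I would try to show, via Fact~\ref{fact.pseudometric} and the additivity of distances, that a line carrying many points of $Y$ either forces a long geodesic among them — handing us many lines through Lemma~\ref{lem.long_path} — or else meets $Y$ in only boundedly many points, in which case the $\binom{|Y|}{2}$ cross-part pairs are forced to spread over $\Omega(|Y|)$ distinct lines. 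Making this dichotomy precise (very plausibly through a secondary application of Dilworth's theorem on the points of a heavy line) is the crux; once an antichain of size $m$ is shown to determine at least about $m/2$ lines, the Dilworth balance above finishes the proof.
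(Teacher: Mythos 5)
Your proposal has a genuine gap, and it sits exactly where you say it does: the claim that an antichain $Y$ (taken with respect to an \emph{arbitrary} base point $a$) determines $\Omega(|Y|)$ distinct lines is never proved, only hoped for. The dichotomy you sketch --- a line containing many points of $Y$ either forces a long geodesic or meets $Y$ in boundedly many points --- is not something Fact~\ref{fact.pseudometric} or a second application of Dilworth readily delivers: points of $Y$ lying on a common line need not be pairwise collinear, let alone assemble into a geodesic sequence, and the machinery the paper develops for comparing pairs that generate the same line (Lemma~\ref{lem.betagamma}) is only exploited effectively for graph metrics, not for general metric spaces. Without this step your balance collapses back to exactly the pseudometric situation, i.e.\ to the weaker $\Omega(n^{2/5})$ bound of Theorem~\ref{thm.pmb}.

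The paper's proof removes the obstacle not by strengthening the antichain analysis for an arbitrary $a$, but by anchoring the argument at a \emph{diametral pair}: choose $a,b$ with $\rho(a,b)=D$, the diameter, and split $V$ into $X_a=\{x:\rho(a,x)>D/2\}$, $X_b=\{x:\rho(b,x)>D/2\}$ and $Y=\{x:\rho(a,x)=\rho(b,x)=D/2\}$. This buys two facts your arbitrary-$a$ setup lacks. First, inside $X_a$ no point can lie ``behind'' $a$: $[y_iay_j]$ is impossible for $y_i,y_j\in X_a$, since it would force $\rho(y_i,y_j)>D$. Hence an antichain of size $t$ in the poset on $X_a$ gives $t$ pairwise distinct lines $\ov{ay_i}$ --- one line \emph{per element}, not per pair --- which is precisely the linear yield you were trying to manufacture. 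Second, the leftover set $Y$ is rigid: if $\ov{ax}=\ov{ay}$ for distinct $x,y\in Y$, then collinearity of $\{a,x,y\}$ together with $\rho(a,x)=\rho(a,y)=D/2$ forces $[xay]$ and $\rho(x,y)=D$; so a pigeonhole class $A\subseteq Y$ of points generating a common line with $a$ has all pairwise distances equal to $D$, hence contains no collinear triple, hence yields $\binom{|A|}{2}$ distinct lines. Balancing the sizes of $X_a$, $X_b$ and $Y$ (the paper uses the thresholds $(n-n^{0.9})/2$ and $n^{0.9}$) then gives the stated $\left(\frac{1}{\sqrt{2}}-o(1)\right)\sqrt{n}$ bound. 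In short, the missing idea is the choice of $a$ as an endpoint of a diameter, which converts your problematic antichain case into two cases, each with an easy linear or quadratic line count.
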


\begin{proof}
Let $M = (V, \rho)$ be a finite metric space on $n$ points with no universal line.
We may assume that $n \ge 4$.
Let $D$ be the diameter of $M$ and let $a$, $b$ be two points such that $\rho(a,b) = D$.
Set:
\begin{align*}
&X_a = \{ x \in V : \rho(a, x) > D / 2 \},\\
& X_b = \{x \in V : \rho(b, x) > D / 2\}, \\
& Y = \{x \in V : \rho(a, x) = \rho(b, x) = D / 2 \}.
\end{align*}

Note that if $x \notin X_a \cup X_b$, then $\rho(a, x) = \rho(b, x) = D/2$, for otherwise we would have $\rho(a,b) \leq \rho(a, x)+\rho(b,x) < D$. So $Y = V \setminus (X_a \cup X_b)$.

We first assume that $|X_a| > (n - n^{0.9}) / 2$ and let $t = \lceil |X_a|^{1/2} \rceil$; so $t \geq (1/\sqrt{2} - o(1))\sqrt{n}$.
We define a binary relation $\preccurlyeq$ on points in $X_a$ where for any $x, y \in X_a$, $x \preccurlyeq y$ if $x = y$ or $[axy]$. It is easy to check that $\mathcal P = (X_a, \preccurlyeq)$ is a poset. By Dilworth's theorem, $\mc P$ either has a chain of size $t$, say $(x_1, x_2, ..., x_t)$, or an antichain of size $t$, say $A = \{y_1, ..., y_t\}$.
In the former case, we clearly have $[a x_1 x_2 ... x_t ]$
and thus by Lemma~\ref{lem.long_path}, $M$ has at least $t$ lines.
In the latter case, for any distinct $y_i, y_j \in A$, both $\rho(a,y_i)$ and $\rho(a,y_j)$ are greater than $D/2$ so $[y_iay_j]$ does not hold. Moreover, since $y_i$ and $y_j$ are not comparable in $\mathcal P$, neither $[ay_iy_j]$ nor $[ay_jy_i]$ holds. So $y_i \notin \ov{ay_j}$ and thus, for $i=1, \dots, t$, the lines $\ov{ay_i}$ are pairwise distinct so $M$ has at least $t$ lines.

By the same argument, we are done when $|X_b| > (n - n^{0.9}) / 2$. So now we may assume $|Y| \geq n^{0.9}$.
Consider the set of lines $\{\ov{ay} : y \in Y\}$. If it has size more than $\sqrt{n}$, then we are done. Otherwise, there is a set $A \subseteq Y$ such that $\ov{ax}$ is the same line for every $x \in A$ and such that $|A| \geq |Y| / \sqrt{n}$. For any distinct points $x$ and $y$ in $A$, the set $\{a, x, y\}$ is collinear. Then since $\rho(a, x) = \rho(a, y) = D/2$, it follows that $\rho(x, y) = D$ and $[xay]$. It follows that no three distinct points in $A$ are collinear, so for every $x, y \in A$ we have $\ov{xy} \cap A = \{x, y\}$. Therefore, the metric space has at least $\binom{|A|}{2}$ distinct lines, which is $\Omega(n^{0.8})$.
\end{proof}

\section{Metric spaces with a bounded number of distances}\label{sect.few_weights}

In this  section  we prove a linear lower bound  on the number of lines in metric spaces with a bounded number of distinct distance values (Theorem~\ref{thm.bounded_dists}).

We introduce certain graph-theoretic concepts that we will use in this proof; these definitions will also be needed when we study metric spaces induced by graphs later in this paper.
We define a graph $G$ to be a pair $(V, E)$ where $V$ is a finite {\em vertex set} and $E \subseteq \binom{V}{2}$ is the {\em edge set}.
We often denote by $uv$ the edge $\{u, v\}$. The vertex and edge sets of a graph $G$ are denoted by $V(G)$ and $E(G)$, respectively.
For a vertex $v$ in a graph, we write $N(v)$ for the set of its {\em neighbours} $\{u : uv \in E\}$ and $\deg(v)$ for its
{\em degree} $|N(v)|$.
A {\em walk} in the graph is a sequence $W = (v_0, v_1, ..., v_k)$ of vertices such that $v_iv_{i+1}$ is an edge in $G$ for each $i = 0, \ldots, k-1$.
For $i<j$, we write $v_iWv_j$ for the {\em segment} $(v_i, \ldots, v_j)$ of the walk $W$. We will also write $u_0W_1u_1W_2...W_tu_t$ for the walk obtained by pasting together the segements $u_0W_1u_1, \ldots, u_{t-1}W_tu_t$ of the walks $W_1, \ldots, W_t$ at their terminal vertices.
We define a {\em path} to be a walk without repeated vertices, and if a path has $k+1$ vertices we say that its \emph{length} is $k$.
If a graph is connected, there is a path between any two vertices and the {\em distance} between two vertices is the length of any shortest
path between them. For terms and notation that are not defined here, we refer to the standard text \cite{BM}.

\begin{defi}
Let  $(V, \rho)$ be a metric space and let $L$ be a line of $(V, \rho)$. The {\em generator graph of $L$}, denoted $H(L) = (V, E(L))$,
is the graph on vertex set $V$ with edge set
\[ E(L) = \left\{ \{a, b\} \in \binom{V}{2} : \ov{ab} = L \right\}; \]
and for any real number $\delta$, the graph $H_\delta(L) = (V, E_\delta(L))$ is the subgraph of $H(L)$ with edge set
\[ E_\delta(L) = \left\{ \{a, b\} \in \binom{V}{2} : \rho(a, b) = \delta, \ov{ab} = L \right\}.\]
\end{defi}

So the edges of $H(L)$ are those pairs that generate $L$ and the edges of $H_\delta(L)$ are those pairs that generate $L$ and are at distance $\delta$.
It is clear that $(E(L) : L \text{ is a line of } (V, \rho))$ is a partition of $\binom{V}{2}$, so we have $\sum |E(L)| = \binom{|V|}{2}$ where the sum is taken over all lines $L$ of $(V, \rho)$.
For any $\delta$, the sum of the number of edges in $H_\delta(L)$ over all lines $L$ equals the number of pairs of points in the metric space at distance $\delta$.

The following lemma is easy and we omit the proof.

\begin{lem}\label{lem.no_dd} Suppose $(V, \rho)$ is a metric space with diameter $D$ and $\delta > D / 2$. If there are three distinct points $a, b, c \in V$ such that $\rho(a, b) = \rho(a, c) = \delta$,
then $\{a,b,c\}$ is not collinear. In particular, for any line $L$, the generator graph $H_\delta(L)$ has
no vertex with degree greater than 1.
\end{lem}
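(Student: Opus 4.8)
The plan is to prove the first assertion by a direct case analysis on which of the three points could be the midpoint of a collinear triple, and then to derive the statement about $H_\delta(L)$ as an immediate corollary.

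First I would observe that if $\{a,b,c\}$ were collinear, then exactly one of $a$, $b$, $c$ lies between the other two, corresponding to one of the three distance equalities in the definition of a line. I would dispose of each case in turn. If $a$ is the middle point, then $[bac]$ holds, so $\rho(b,c) = \rho(b,a) + \rho(a,c) = 2\delta$; since $\delta > D/2$, this gives $\rho(b,c) = 2\delta > D$, contradicting the fact that $D$ is the diameter. If instead $b$ is the middle point, then $[abc]$ gives $\rho(a,c) = \rho(a,b) + \rho(b,c)$, that is, $\delta = \delta + \rho(b,c)$, forcing $\rho(b,c) = 0$ and hence $b = c$, contrary to the assumption that the three points are distinct. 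The case in which $c$ is the middle point is symmetric and forces $b = c$ in the same way. Since every possibility leads to a contradiction, $\{a,b,c\}$ is not collinear. It is worth noting that the hypothesis $\delta > D/2$ is used only in the first case, where it is exactly what is needed to make $2\delta$ exceed the diameter, while distinctness of the points alone handles the other two cases.

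For the \emph{in particular} clause, I would argue by contradiction. Suppose some vertex $a$ had two distinct neighbours $b$ and $c$ in $H_\delta(L)$. By definition of $H_\delta(L)$, this means $\rho(a,b) = \rho(a,c) = \delta$ and $\ov{ab} = \ov{ac} = L$. Then $c \in \ov{ac} = \ov{ab}$, and since $c \neq a$ and $c \neq b$, the point $c$ is collinear with $a$ and $b$; that is, $\{a,b,c\}$ is collinear. This contradicts the first part of the lemma, so no vertex of $H_\delta(L)$ can have degree exceeding one.

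Since betweenness is defined by a single distance equality in each case, there is no genuine obstacle here: the whole argument reduces to substituting $\rho(a,b) = \rho(a,c) = \delta$ into the three equalities and invoking either the diameter bound or the nondegeneracy of the metric. The only point requiring care is to enumerate the three collinearity cases correctly and to keep track of which betweenness statement corresponds to each choice of midpoint, using the equivalences listed in the definition of a line.
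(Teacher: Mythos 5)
Your proof is correct: the three-way case analysis (middle point $a$ forces $\rho(b,c)=2\delta>D$; middle point $b$ or $c$ forces $\rho(b,c)=0$) and the reduction of the degree claim to the collinearity claim are exactly the intended argument. The paper omits the proof of this lemma as ``easy,'' and your write-up is precisely the routine verification it had in mind, with the roles of the hypotheses ($\delta > D/2$ versus distinctness) correctly identified.
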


Before stating the main theorem of this section, we give a short sketch of its proof; recall that we are giving a lower bound on the number of lines in a metric space in which the distance function takes a bounded number of distinct values.
Let $M = (V, \rho)$ be such a metric space on $n$ points. We first prove that either we have our desired number of lines or $M$ contains a lot of (that is, on the order of $n^2$) pairs of points at distance $D/2$, where $D$ is the diameter of $M$.
This implies that there is a line $L$ such that the number of edges in the generator graph $H_{D/2}(L)$ is large (that is, the number of edges is of order $n$).
Then we prove some strong structural properties of $H_{D/2}(L)$ that permit us to find enough distinct lines in the metric subspace of $M$ induced by the vertices of degree at least one in $H_{D/2}(L)$.

\begin{thm}\label{thm.bounded_dists} If $M = (V, \rho)$ is an $n$-point metric space ($n \geq 2$) and $W = \{\rho(a, b) : a, b \in V\}$, then $M$ has at least $\frac{|V|}{5|W|}$ distinct lines.
\end{thm}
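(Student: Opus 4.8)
The plan is to fix a diametral pair $a,b$ with $\rho(a,b)=D$ and split on how the points sit around the bisecting set, exactly as in the proof of Theorem~\ref{thm.main}, but to extract a \emph{linear} rather than a square-root gain by exploiting that, when $|W|$ is small, every geodesic is short. Set
$X_a=\{x:\rho(a,x)>D/2\}$, $X_b=\{x:\rho(b,x)>D/2\}$ and $Y=V\sm(X_a\cup X_b)$, so every point of $Y$ is at distance exactly $D/2$ from both $a$ and $b$. First suppose $|X_a|\ge n/5$ (the case $|X_b|\ge n/5$ being symmetric). On $X_a$ I would put the poset $x\prc y\iff x=y$ or $[axy]$. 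Using Fact~\ref{fact.pseudometric} and the metric, a chain $x_1\prc\cdots\prc x_t$ is a geodesic $[ax_1\cdots x_t]$, so $\rho(a,x_1)<\cdots<\rho(a,x_t)$ are distinct positive values of $W$; hence \emph{every chain has at most $|W|-1$ elements}. The crucial difference from Theorem~\ref{thm.main} is that I would not apply Dilworth to harvest a $\sqrt{\cdot}$-sized antichain: since the longest chain has at most $|W|-1$ elements, the dual of Dilworth's theorem~\cite{Dilworth} (Mirsky's theorem) covers $X_a$ by at most $|W|-1$ antichains, so some antichain $A$ satisfies $|A|\ge |X_a|/(|W|-1)\ge n/(5|W|)$. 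For distinct $x,y\in A$ the triple $\{a,x,y\}$ is not collinear, because incomparability rules out $[axy]$ and $[ayx]$, while $[xay]$ would force $\rho(x,y)=\rho(a,x)+\rho(a,y)>D$; hence the lines $\ov{ax}$ $(x\in A)$ are pairwise distinct and we already have at least $n/(5|W|)$ lines.

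It remains to treat the case $|X_a|,|X_b|<n/5$, which forces $|Y|>3n/5$; this is where the real work lies. For $y,y'\in Y$ one checks that $\{a,y,y'\}$ is collinear if and only if $[yay']$, if and only if $\rho(y,y')=D$, and moreover $\ov{ay}=\ov{ay'}$ implies $\rho(y,y')=D$, so each fibre of $y\mapsto\ov{ay}$ is a set of points pairwise at distance $D$. If this map takes at least $n/(5|W|)$ values we are done. Otherwise, following the sketch, I would argue that when both the number of lines and the number of such fibres are small, the number of pairs at distance $D/2$ is forced to be $\Omega(n^2)$: the matching structure of Lemma~\ref{lem.no_dd} bounds the pairs at distance greater than $D/2$ (each line contributes a matching, hence at most $n/2$ such pairs per distance), and the remaining pairs must pile up at distance $D/2$. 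Since $\sum_L|E_{D/2}(L)|$ counts all $D/2$-pairs, some single line $L$ then has $|E_{D/2}(L)|=\Omega(n)$, and the goal becomes reading off $\Omega(n)$ lines from the generator graph $H:=H_{D/2}(L)$.

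The structure of $H$ is controlled by three observations. Every non-isolated vertex lies on $L$, since $\ov{uv}=L$ gives $u,v\in L$. The graph $H$ is triangle-free, because three points pairwise at distance $D/2$ cannot be collinear (any betweenness among them would create a distance $D$) and so cannot all generate $L$. Finally, in the spirit of Lemma~\ref{lem.no_dd}, the neighbourhood $N(u)$ of any vertex is a $D$-clique: if $v,w\in N(u)$ then $w\in L=\ov{uv}$ makes $\{u,v,w\}$ collinear with $\rho(u,v)=\rho(u,w)=D/2$, forcing $[vuw]$ and $\rho(v,w)=D$. From these I expect each component of $H$ to be complete bipartite with both sides $D$-cliques. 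A $D$-clique of size $k$ contains no collinear triple (a betweenness would give distance $2D$), so exactly as in Theorem~\ref{thm.main} it spans $\binom{k}{2}$ distinct lines; and a component $K_{p,q}$ contributes at least $\binom{\max(p,q)}{2}\ge\tfrac14 pq$ lines from its larger side, converting the $\Omega(n)$ edges into $\Omega(n)\ge n/(5|W|)$ lines.

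I expect the main obstacle to be precisely this last step, in two parts. First, establishing the complete-bipartite structural lemma for $H_{D/2}(L)$ rather than merely triangle-freeness and the $D$-clique property of neighbourhoods. Second, and more seriously, proving that the lines harvested from \emph{different} $D$-cliques are pairwise distinct: a priori two such lines $\ov{vw}$ and $\ov{v'w'}$ both pass through the point realising the betweenness $[v\,\cdot\,w]$, so they can coincide across components. The cleanest remedy I foresee is to carry out the entire count inside the metric subspace induced by the non-isolated vertices of $H$ (all of which lie on $L$), where $D$ is still the diameter and the $D/2$-pairs are dense, so that distinct lines of the subspace yield distinct lines of $M$; controlling this overlap is the crux of the argument.
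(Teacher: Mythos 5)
Your overall skeleton does match the paper's (large fibres of $y\mapsto\ov{ay}$ inside $Y$, a line $L$ whose generator graph $H_{D/2}(L)$ has many edges, bipartite components whose sides are pairwise at distance $D$, lines harvested from the sides), and your first case is fine: chains in the poset on $X_a$ have at most $|W|-1$ elements, so Mirsky's theorem yields an antichain of size at least $n/(5|W|)$ whose points give pairwise distinct lines through $a$ --- this is essentially how the paper handles $|X_a|\ge n/5$ inside Claim~\ref{lem.pair_do2}, where the antichains are the level sets $S(a,\delta)$, $\delta>D/2$, and one pigeonholes on them. The first genuine gap is your derivation of $\Omega(n^2)$ pairs at distance $D/2$. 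Lemma~\ref{lem.no_dd} gives a matching bound per line only for distances \emph{greater} than $D/2$; nothing in your argument prevents almost all $\binom{n}{2}$ pairs from lying at distances strictly between $0$ and $D/2$, so ``the remaining pairs must pile up at distance $D/2$'' does not follow. The paper's mechanism is different and is genuinely needed: points in a common fibre of $y\mapsto\ov{ay}$ are pairwise at distance $D$, hence at least $3n/5-n/(5w)$ points have a diametral partner, and then Claim~\ref{lem.pair_do2} is applied \emph{a second time, to each such diametral pair}, showing each of those points has at least $3n/5$ points at distance exactly $D/2$ from it. Your proposal has no substitute for this second application.

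The second gap is the one you yourself call the crux, and deferring it defers most of the proof; moreover your proposed remedy does not work. First, without pruning $H_{D/2}(L)$ to minimum degree at least $3$, its components can be isolated edges: such components generate only $L$ itself, and your inequality $\binom{\max(p,q)}{2}\ge\frac{1}{4}pq$ is false at $p=q=1$, so a near-perfect-matching $H_{D/2}(L)$ defeats the count entirely. The paper prunes at a cost of up to $2n$ edges, affordable only because some line was first shown to have at least $12n/5$ edges; making these constants work forces $w\ge 3$, and the theorem for $w=2$ is then proved separately via Theorem~\ref{thm.12metric} --- so your unspecified $\Omega$-constants cannot deliver the stated bound $n/(5|W|)$ for small $|W|$, and the $w\in\{1,2\}$ cases you skip require real (in the $w=2$ case, nontrivial) arguments. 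Second, passing to the metric subspace induced by the non-isolated vertices of $H$ does nothing to prevent a pair in one component and a pair in another from generating the same line; the paper's actual resolution is Claim~\ref{lem.only_d_do2}: in a subgraph of minimum degree at least $2$, \emph{all} pairwise distances, including those between different components, lie in $\{D/2,D\}$. Combined with the parity claim, this shows that a line generated by two points $c,d$ of another component is either disjoint from $A_i$ or contains all of $A_i$, whereas $\ov{ab}\cap A_i=\{a,b\}$ with $|A_i|\ge 3$, so the two lines differ. That metric statement is the heart of the proof and is absent from your proposal.
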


\begin{proof}
Let $n = |V|$, $w = |W|$, let $D$ be the diameter of $M$, and let $m$ be the number of lines in $M$. We may assume that $m < \frac{n}{5w}$.

For any $z \in V$ and $\delta \in W$, set $S(z, \delta) = \{ x \in V : \rho(x, z) = \delta \}$; that is, the set of points with distance $\delta$ to $z$.

\begin{claim}
$w \geq 3$.
\end{claim}

\begin{proofclaim}
If $w = 1$, all pairs of points are at distance $D$ so no triple is collinear, and we have $m \geq \binom{n}{2} > n/5$.

Suppose next that $w = 2$; we shall show that $m \geq n/10$. We can write $W = \{\alpha, \beta\}$ for some two real numbers $\alpha$ and $\beta$. It is straightforward to check that unless $\alpha = 2\beta$ or $\beta = 2\alpha$, no triples are collinear and we again have $m \geq \binom{n}{2} > n/10$. So we may assume $W = \{1, 2\}$.

Consider any $V' \subseteq V$ where $|V'| \geq n/5 + 2$ and let $M' = (V', \rho)$ be the subspace of $M$ on ground set $V'$. For any $x, y \in V'$, the line generated by $x$ and $y$ in $M'$ is exactly $V' \cap \ov{xy}$ (that is, the intersection of $V'$ with the line they generate in $M$). Hence $M'$ has fewer than $n / 10$ lines, since $M$ does. But then it follows from Theorem~\ref{thm.12metric} applied to $M'$ that $M'$ has a universal line, so there exist $x, y \in V'$ such that $V' \subseteq \ov{xy}$.
Suppose that such $x, y$ exist with $\rho(x, y) = 1$. Then for each point $z \in V' \setminus \{x, y\}$, we must have $\rho(x, z) = 2$ or $\rho(y, z) = 2$. Thus, we may assume, by symmetry, that $|S(x, 2)| \geq n/10$. It follows that $\{ \ov{xz} : z \in S(x, 2) \}$ is a set of at least $n/10$ distinct lines.
So we may assume that any set $V'$ with $|V'| \geq n/5 + 2$ contains a pair $x, y$ such that $V' \subseteq \ov{xy}$ and $\rho(x, y) = 2$. Note that for any $z \in V' \setminus \{x, y\}$, we have $\rho(x, z) = \rho(y, z) = 1$.

Now, start from $V_0 = V$, and as long as $|V_i| \geq n/5 + 2$ (this is true whenever $i < t = \lfloor 2n/5 \rfloor - 1$), we find $x_i, y_i \in V_i$ such that both are at distance 1 to all points in $V_{i+1} = V_{i} \setminus \{x_i, y_i\}$. Then the pairwise distances among $\{x_0, x_1, ..., x_t\}$ are all 1, so the $\binom{t+1}{2}$ pairs of points in this set generate distinct lines. It is easy to check $\binom{t+1}{2} \geq n/10$ when $n > 10$. When $n \leq 10$, our claim is trivial because $m \geq 1$.
\end{proofclaim}

The next claim is immediate and will be used often.

\begin{claim}\label{lem.d_do2_flip}
 If three points $\{a, b, c\}$ are collinear, $\rho(a, b) = D/2$ and $\rho(b, c) \in \{D/2, D\}$, then $\rho(a, c) = 3D/2 - \rho(b, c)$.
\end{claim}

The next two claims show that we may assume that there are a lot of pairs of points at distance $D/2$.

\begin{claim}~\label{lem.pair_do2}
If $a, b \in V$ are points with $\rho(a, b) = D$, then the set $Y = \{x \in V : \rho(a, x) = \rho(b, x) = D/2\}$ has size at least $3n/5$.
\end{claim}

\begin{proofclaim}
Let $X_a = \{ x \in V : \rho(a, x) > D / 2\}$, $X_b = \{ x \in V : \rho(b, x) > D / 2\}$.
So $Y = V \setminus (X_a \cup X_b)$.
Hence, in order to prove that $|Y| \ge 3n/5$, it is enough to prove that both $|X_a|$ and $|X_b|$ are less than $n / 5$.

Assume that $|X_a| \geq n / 5$. Then there is a number $\delta \in W$ such that $\delta > D/2$ and $|S(a, \delta)| \geq n / (5 w)$.
By Lemma~\ref{lem.no_dd}, for any distinct elements $z_1$, $z_2$ of $S(a, \delta)$, the three points $\{a, z_1, z_2\}$ are not collinear. So the lines of the form $\ov{az}$ with $z \in S(a, \delta)$ are all pairwise distinct, which gives $n/(5w)$ distinct lines, contradicting the assumption that $m < n/(5w)$.
The symmetric argument shows that $|X_b| < n/5$.
\end{proofclaim}

\begin{claim}\label{lem.many_do2}
$M$ has at least $4n^2/25$ pairs of points at distance $D/2$.
\end{claim}

\begin{proofclaim} Pick any two points $a$ and $b$ such that $\rho(a, b) = D$, and consider the set $Y = \{x \in V : \rho(a, x) = \rho(b, x) = D/2\}$. By Claim \ref{lem.pair_do2} we have $|Y| \geq 3n / 5$.

Partition $Y$ into $s$ parts $(Y_1, \dots, Y_s)$ where $z_1,z_2$ are in the same part if and only if $\ov{az_1} = \ov{az_2}$.
So $s \leq m < n/(5w)$ and thus at least $3n / 5 - n / (5w)$ points in $Y$ are in some part $Y_i$ of size at least two.
If $z_1$ and $z_2$ are two points in the same part $Y_i$, then since $\ov{az_1} = \ov{az_2}$, $\{a, z_1, z_2\}$ is collinear; then since $\rho(a, z_1) = \rho(a, z_2) = D / 2$, we have $\rho(z_1, z_2) = D$.
So, the set
\[
A = \{ x \in V : \exists y \in V, \rho(x, y) = D\}
\]
has size $|A| \geq 3n / 5 - n / (5w)$, which is at least $8n / 15$ because $w \geq 3$.
By Claim~\ref{lem.pair_do2}, for any point $a \in A$, we have $|S(a, D/2) | \ge 3n / 5$.
Counting pairs with distance $D/2$ of which at least one element is in $A$,
we have at least $ \frac{1}{2} \frac{8n}{15} \frac{3n}{5} = \frac{4n^2}{25}$ pairs.
\end{proofclaim}

\begin{claim}\label{lem.parity}
Let $L$ be a line of $M$ and $P = (x_1, ..., x_t)$ a path in $H_{D/2}(L)$. Let $x$ be a point in $L$ but not in $P$.
If $\rho(x, x_1) \in \{D , D/2\}$, then
$\rho(x, x_t) = \rho(x, x_1)$ when $t$ is odd, and $\rho(x, x_t) = 3D / 2 - \rho(x, x_1)$ when $t$ is even.
\end{claim}

\begin{proofclaim}
We proceed by induction on $t$.
The base case $t = 1$ is trivial. Now suppose the claim holds for paths of length less than $t - 1$ and that $t \geq 2$. By the induction  hypothesis $\rho(x, x_{t-1}) = 3D / 2 - \rho(x, x_1)$ if $t$ is odd and $\rho(x, x_{t-1})  =\rho(x, x_1)$ if $t$ is even.
Since $x \in L = \ov{x_{t-1}x_t}$ and $\rho(x_{t-1}, x_t) = D/2$, by Claim~\ref{lem.d_do2_flip} we have
$\rho(x, x_t) = \rho(x, x_1)$ if $t$ is odd and $\rho(x, x_t) = 3D / 2 - \rho(x, x_1)$ if $t$ is even.
\end{proofclaim}

\begin{claim}\label{lem.bipartite}
Let $L$ be a line in $M$. If $C$ is a connected component of $H_{D/2}(L)$ with at least two vertices, then $V(C)$
can be partitioned into sets $A$ and $B$ such that

(a) all the edges in the component $C$ are between $A$ and $B$ (i.e. $H_{D/2}(L)$ is a bipartite graph);

(b) for any distinct vertices $a_1$ and $a_2$ in $A$, $\rho(a_1, a_2) = D$;  for any distinct vertices $b_1$ and $b_2$ in $B$, $\rho(b_1, b_2) = D$;
for any $a \in A$ and $b \in B$, $\rho(a, b) = D / 2$.
\end{claim}

\begin{proofclaim}
If there is an odd cycle in $C$, then $C$ has two vertices $x$ and $z$ and paths $(x, x_1, \ldots, x_t)$ and $(x, y_1, \ldots, y_s)$ such that $z = x_t = y_s$, $t$ is odd, and $s$ is even. Then by Claim~\ref{lem.parity}, we have both $\rho(x, z) = D/2$ and $\rho(x,z) = 3D/2$, a contradiction. This proves (a).
Pick any path $(x, x_1, x_2, ..., x_t)$ in $C$.
Since $\{x, x_1\}$ is an edge of $H_{D/2}(L)$, we have $\rho(x, x_1) = D / 2$ and thus, by Claim~\ref{lem.parity}, we have $\rho(x, x_t) = D / 2$ if $t$ is odd and $\rho(x, x_t) = D$ if $t$ is even; (b) follows easily.
\end{proofclaim}

\begin{claim}\label{lem.only_d_do2}
Let $L$ be a line and let $H' = (V', E')$ be a subgraph of $H_{D/2}(L)$ such that every vertex in $V'$ has degree at least two in $H'$.
If $x$ and $y$ are distinct vertices of $H'$, then $\rho(x, y) \in \{D/2, D\}$.
\end{claim}

\begin{proofclaim}
Note that $V' \subseteq L$.
We first prove that for any distinct points $x_0$ and $y_0$ in $V'$, $\rho(x_0, y_0) \leq D/2$ or $\rho(x_0, y_0) = D$.
If $x_0$ and $y_0$ are in the same component of $H'$, then by part (b) of Claim~\ref{lem.bipartite} we are done. Otherwise,
suppose that $\rho(x_0, y_0) = \delta$ for some $\delta$ satisfying $D/2 < \delta < D$.
Since every vertex in $H'$ has degree at least $2$, there are points $z_1$ and $z_2$ in $V'$ such that $\{x_0,z_1\}$ and $\{x_0,z_2\}$ are edges of $H'$. So in particular $\ov{x_0z_1} = \ov{x_0z_2} = L$ and $\rho(x_0,z_1)=\rho(x_0,z_2)=D/2$.
Also note that $\rho(z_1, z_2) = D$ by Claim~\ref{lem.bipartite}.
Since $y_0 \in L$, $\{x_0, z_1, y_0\}$ is collinear and thus we must have $\rho(y_0, z_1) = \delta - D / 2$ and similarly $\rho(y_0, z_2) = \delta - D / 2$.
So, we have  $D= \rho(z_1,z_2) \le \rho(z_1,y_0) + \rho(y_0,z_2) = 2\delta - D < D$, a contradiction.
So the distance between any two points in $V'$ is either at most $D/2$, or equal to $D$.

It remains to show that for any two points $x$ and $y$ in $V'$, we do not have $\rho(x, y) < D / 2$.
If $x$ and $y$ are in the same connected component of $H_{D/2}(L)$, then we are done by part (b) of Claim~\ref{lem.bipartite}.
So we may assume $x$ and $y$ are not in the same connected component of $H_{D/2}(L)$.
Let $\delta' = \rho(x, y)$, so $0 < \delta' < D/2$.
We let $z_1$ and $z_2$ be points such that $\{x, z_1\}$ and $\{x, z_2\}$ are edges of $H'$.
Since $\{x, y, z_1\}$ is collinear,
$\rho(z_1, y) = D / 2 - \delta'$ or $\rho(z_1, y) = D / 2 + \delta'$.
But we have shown that no distance between two points in $V'$ is in the interval $(D/2, D)$, so $\rho(z_1, y)$ cannot equal $D/2 + \delta'$.
Hence we have $\rho(z_1, y) = D / 2 - \delta'$.
Similarly, $\rho(z_2, y) = D / 2 - \delta'$. So $\rho(z_1, z_2) \leq D - 2\delta' < D$, a contradiction.
\end{proofclaim}

We are now armed to finish the proof.
By Claim~\ref{lem.many_do2}, there are at least $4n^2/25$ pairs of points at distance $D/2$.
It follows that there is some line $L$ such that $H_{D/2}(L)$ has $12n/5$ edges, for otherwise $M$ would have at least $n/15 \geq n/(5w)$ lines. We construct a subgraph $H'$ of $H_{D/2}(L)$ by repeatedly deleting vertices of degree less than three until we obtain a subgraph of minimum degree at least three. We can delete at most $n$ vertices, which means we delete at most $2n$ edges, so $H'$ has at least $2n/5$ edges.

Let $C_1$, $C_2$, ..., $C_k$ be the connected components of $H'$. For each $i = 1, \ldots, k$, it follows from Claim~\ref{lem.bipartite} that $C_i$ is a bipartite graph with vertex partition $(A_i, B_i)$ such that no edge of $C_i$ is contained in $A_i$ or in $B_i$.
Let $a_i = |A_i|$ and $b_i = |B_i|$. We may assume that $a_i \geq b_i$ and that $a_1 \geq a_2 \geq ... \geq a_k$. Note that $a_k \geq 3$ by the definition of $H'$.
In the rest of the proof we are going to show that lines generated by pairs of vertices that are in the same set $A_i$ are all different.

\begin{claim}
 Let $i$ and $j$ be distinct elements of $\{1, \ldots, k\}$ and let $a, b$ be distinct elements of $A_i$.
 If $c, d$ are distinct points of $A_i$ such that $\{a, b\} \neq \{c, d\}$, then $\ov{ab} \neq \ov{cd}$.
 If $c, d$ are distinct points in $A_j$, then $\ov{ab} \neq \ov{cd}$.
\end{claim}

\begin{proofclaim}
It follows from part (b) of Claim~\ref{lem.bipartite} that, for any vertex $u \in A_i \setminus \{a, b\}$, we have $\rho(a, u) = \rho(b, u) = \rho(a, b) = D$, so $\ov{ab} \cap A_i = \{a, b\}$ and this implies that for any points $c, d \in A_i$, $\ov{cd} \neq \ov{ab}$ unless $\{c, d\} = \{a, b\}$.

Now consider any $v, w \in A_i$.
By Claim~\ref{lem.only_d_do2}, we have $\rho(v, c) \in \{D/2, D\}$, and by repetead applications of Claim~\ref{lem.parity}, we have $\rho(v, c) = \rho(c, w) = \rho(w, d) = \rho(d, v)$.
This means that there is some $\alpha \in \{D/2, D\}$ such that $\rho(z, c) = \rho(z, d) = \alpha$ for all $z \in A_i$.
Therefore, the line $\ov{cd}$ is either disjoint from $A_i$ or contains all of $A_i$, because the distance from $c$ to every vertex in $A_i$ is the same and the distance from $d$ to every vertex in $A_i$ is the same.
On the other hand, $\ov{ab} \cap A_i = \{a, b\}$ as we observed in the last paragraph. Because $a_i \geq 3$, it follows that $\ov{ab} \neq \ov{cd}$.
\end{proofclaim}

This means that the number of lines in $M$ is at least
\[
\sum_{i=1}^k \binom{a_i}{2} \geq \sum_{i=1}^k \frac{a_i^2}{3} \geq \sum_{i=1}^k \frac{a_i b_i}{3} \geq \frac{2n}{15},
\]
where the first inequality follows from the fact that $a_i - 1 \geq \frac{2}{3}a_i$ since $a_i \geq 3$, and the last inequality follows from the fact that $2n/5 \leq |E(H')| \leq \sum_{i=1}^k a_i b_i$.
Since $w \geq 3$, we have $2n/15 \geq 2n/5w$, which finishes the proof.
\end{proof}

\section{When each nonzero distance equals 1, 2 or 3}\label{sect.123}

By a {\em $k$-metric space}, we mean a metric space in which the distance between any two points is an integer less than or equal to $k$.
This includes all metric spaces induced by graphs with diameter $k$.
Theorem \ref{thm.bounded_dists} immediately implies

\begin{cor} For any positive integer $k$, every $k$-metric space on $n$ points has at least $n/(5k)$ lines.
\end{cor}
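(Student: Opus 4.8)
The plan is to read off the corollary directly from Theorem~\ref{thm.bounded_dists}, so that essentially all of the work reduces to bounding the number of distinct distance values that can occur in a $k$-metric space. Recall that Theorem~\ref{thm.bounded_dists} asserts that any $n$-point metric space ($n \ge 2$) has at least $\frac{|V|}{5|W|}$ distinct lines, where $W = \{\rho(a,b) : a, b \in V\}$ is the set of distances realized in the space. The only ingredient needed, therefore, is a good upper bound on $|W|$.

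To obtain this bound I would simply invoke the definition of a $k$-metric space: the distance between any two points is an integer at most $k$. Consequently every distance between two distinct points lies in $\{1, 2, \ldots, k\}$, a set of size $k$, so $|W| \le k$. Substituting into Theorem~\ref{thm.bounded_dists} yields at least $\frac{n}{5|W|} \ge \frac{n}{5k}$ lines, which is exactly the claimed bound.

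I do not anticipate any real obstacle here, since the corollary is designed so that all of the combinatorial difficulty has already been absorbed into Theorem~\ref{thm.bounded_dists}. The single point that deserves a moment's attention is the convention governing $W$: one should confirm that $|W|$ counts the distinct distance \emph{values} between points, so that integrality together with $\rho \le k$ forces $|W| \le k$ rather than a larger quantity. With that settled, the substitution is immediate and the proof is complete.
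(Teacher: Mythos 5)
Your proof is correct and is exactly the paper's derivation: the paper presents this corollary as an immediate consequence of Theorem~\ref{thm.bounded_dists}, obtained by noting that in a $k$-metric space all positive distances are integers in $\{1,\dots,k\}$, so $|W|\le k$. Your closing remark about the convention for $W$ is resolved the same way in the paper, whose proof of Theorem~\ref{thm.bounded_dists} effectively treats $W$ as the set of distances between distinct points, so the bound $n/(5k)$ (rather than $n/(5(k+1))$) is indeed what follows.
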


Conjecture~\ref{conj.cc} itself is known to hold for $2$-metric spaces; as we stated in Theorem~\ref{thm.12metric}.
Chiniforooshan and Chv\' atal \cite{Chini_Chvatal} proved that, asymptotically, there are even more lines than this:

\begin{thm}\label{thm.2metric} (\cite{Chini_Chvatal})
The smallest number $h(n)$ of lines in a 2-metric space on $n$ points satisfies the inequalities
\[(1 + o(1))\alpha n^{4/3} \leq h(n) \leq (1 + o(1))\beta n^{4/3}\]
with $\alpha=2^{-7/3}$ and $\beta=3 \cdot 2^{-5/3}$.
\end{thm}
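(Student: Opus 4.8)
The plan is to prove the two bounds separately, after first translating the problem into graph language. A $2$-metric space on $V$ is the same thing as a graph $G$ of diameter at most two, where $\rho(u,v)=1$ exactly when $uv$ is an edge and $\rho(u,v)=2$ otherwise. A direct check of the betweenness relation then gives explicit formulas for the lines: for an edge $uv$ one has $\ov{uv}=N(u)\triangle N(v)$, while for a non-edge $uv$ one has $\ov{uv}=\{u,v\}\cup(N(u)\cap N(v))$. Thus a line is generated by a distance-$1$ pair precisely when the two neighbourhoods have a prescribed symmetric difference, and by a distance-$2$ pair precisely when the two endpoints are jointly adjacent to a prescribed common set. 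I would record these two formulas first, since every later estimate is phrased in terms of them.

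For the lower bound I would count via generators. Writing $e(L)$ for the number of pairs generating a line $L$, the partition of $\binom{V}{2}$ into generator classes gives $\sum_L e(L)=\binom{n}{2}$, so the number of lines is $\binom{n}{2}$ divided by the average richness, and to force many lines it suffices to rule out large richness sustained in a balanced way. The natural first move, Cauchy--Schwarz in the form (number of lines) $\ge \binom{n}{2}^2 / \sum_L e(L)^2$, does not work verbatim: a single line can have $\Theta(n^2)$ generators --- for instance in a complete bipartite graph every edge generates the universal line --- so $\sum_L e(L)^2$ can be as large as $\Theta(n^4)$. The point, and the main technical content, is that such a super-rich line is never an asset: as the complete bipartite example shows, a line absorbing $\Theta(n^2)$ pairs coexists with $\Theta(n^2)$ further pairs that are forced to generate pairwise distinct lines. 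I would therefore split the lines into rich and poor, argue that the rich lines can be discarded at the cost of only lower-order terms, and then bound the coincidences among the surviving generators.

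The exponent $4/3$ should come from the structure of the generator classes. The distance-$1$ generators of a fixed line form the graph $H_1(L)=H_{D/2}(L)$ (here $D=2$, so $D/2=1$), whose components are complete-bipartite-like with the two sides at mutual distance $D=2$; this is exactly the structure isolated in Claim~\ref{lem.bipartite} and Claim~\ref{lem.only_d_do2} in the proof of Theorem~\ref{thm.bounded_dists}. The distance-$2$ generators of $L$ are non-adjacent pairs jointly adjacent to the core $L\setminus\{u,v\}$, and counting the incidences between vertices and such cores is a Zarankiewicz / K\H{o}v\'ari--S\'os--Tur\'an problem; it is this bipartite incidence bound that produces the $n^{4/3}$ scaling. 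Optimising the rich/poor split together with these incidence counts should then identify the balanced extremum, in which every surviving line has richness $(1+o(1))2^{4/3}n^{2/3}$, yielding $(1-o(1))\binom{n}{2}/(2^{4/3}n^{2/3})=(1-o(1))2^{-7/3}n^{4/3}$ lines and hence the constant $\alpha$.

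For the matching upper bound I would exhibit a diameter-$2$ graph in which the generators collapse onto $\Theta(n^{4/3})$ lines of uniform richness $\Theta(n^{2/3})$. A strongly regular or Cayley-type graph is the natural candidate, because there both line formulas become uniform: in a strongly regular graph every non-edge line has core size $\mu$ and every edge line has size $2k-2\lambda$, so the delicate issue is arranging the \emph{coincidences} (which distinct pairs yield the same neighbourhood symmetric difference, or the same common set) rather than controlling the sizes. I would choose the parameters of an explicit algebraic family so that the two line types can each be enumerated exactly and their totals optimised to meet $\beta=3\cdot 2^{-5/3}$. The main obstacle throughout is the constant: the lower bound requires controlling the super-rich lines precisely enough that nothing is lost beyond $o(n^{4/3})$, and the upper bound requires a construction whose two families of lines are counted exactly rather than only up to constants. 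By contrast, the power $n^{4/3}$ itself drops out of the incidence bound and is comparatively routine.
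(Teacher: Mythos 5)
A point of order first: the paper you were given does not actually prove this statement --- Theorem~\ref{thm.2metric} is quoted from Chiniforooshan and Chv\'atal~\cite{Chini_Chvatal} --- so your sketch can only be measured against that external proof and against the closely related machinery this paper does develop (Lemma~\ref{lem.no_dd}, Case~2 of Theorem~\ref{thm.3metric}, Case~1 of Lemma~\ref{lem.graph_gamma_quad}, and the claims inside the proof of Theorem~\ref{thm.bounded_dists}). Your preliminaries are fine: the formulas $\ov{uv}=N(u)\triangle N(v)$ for edges and $\ov{uv}=\{u,v\}\cup(N(u)\cap N(v))$ for non-edges are correct (although a $2$-metric space corresponds to an \emph{arbitrary} graph, not only one of diameter at most two, since any $\{1,2\}$-valued distance automatically satisfies the triangle inequality). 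The genuine gap is in the lower bound: ``the rich lines can be discarded at the cost of only lower-order terms'' is not a step that can be deferred --- it \emph{is} the theorem --- and your sketch supplies no mechanism for it. The actual mechanism is concrete: two distinct distance-$2$ pairs generating the same line $L$ must be disjoint (Lemma~\ref{lem.no_dd} with $D=2$), so the distance-$2$ generators of $L$ form a matching, and each endpoint of one generating pair lies in $N(u)\cap N(v)$ for every other generating pair $\{u,v\}$; choosing one endpoint from each of $s$ generating pairs therefore yields $s$ pairwise-adjacent points, no three of which are collinear (because $1\neq 1+1$), hence $\binom{s}{2}$ pairwise distinct lines --- exactly the argument of Case~1 of Lemma~\ref{lem.graph_gamma_quad} and Case~2 of Theorem~\ref{thm.3metric}. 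The exponent $4/3$ then comes from balancing the pigeonhole count (number of pairs divided by $s$) against $\binom{s}{2}$, i.e.\ from $s^3\sim n^2$; it does \emph{not} come from a K\H{o}v\'ari--S\'os--Tur\'an bound, which for $K_{2,2}$-free incidence graphs gives exponent $3/2$ and is a misdiagnosis here. You also say nothing about the genuinely hard case, namely lines rich in \emph{distance-$1$} generators: $H_1(L)$ can be a perfect matching (take two disjoint cliques joined by a perfect matching; every matching edge then generates the universal line), and in that case the bipartite-structure claims you cite from the proof of Theorem~\ref{thm.bounded_dists} yield nothing, since every component is a $K_{1,1}$. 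Finally, your calibration ``every surviving line has richness $(1+o(1))2^{4/3}n^{2/3}$'' is inconsistent with the extremal configuration, where most lines have exactly one generator and the remaining ones about $2^{2/3}n^{2/3}$ each; indeed $\beta/\alpha=3\cdot 2^{2/3}$, which already rules out any balanced-richness picture accounting for both constants.

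The upper bound is likewise not established: ``a strongly regular or Cayley-type graph'' with parameters to be chosen later is a hope, not a construction. The extremal example is explicit and elementary, and the paper records a version of it in Section~\ref{sect.graph}: the complete $k$-partite graph with balanced parts and $k\approx 2^{-1/3}n^{2/3}$. There, all edges between two fixed parts $P$ and $Q$ generate the single line $P\cup Q$, contributing $\binom{k}{2}\approx 2^{-5/3}n^{4/3}$ lines, while the non-edges inside a part $P$ generate pairwise distinct lines $\{u,v\}\cup(V\setminus P)$, contributing about $n^2/(2k)\approx 2^{-2/3}n^{4/3}$ lines; the total $(2^{-5/3}+2^{-2/3}+o(1))n^{4/3}=(3\cdot 2^{-5/3}+o(1))n^{4/3}$ is exactly $\beta n^{4/3}$. (Balanced complete multipartite graphs happen to be strongly regular, so your proposed family does contain the answer, but without identifying it and carrying out this count the upper bound is not proved.)
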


Here we use this theorem (and some ideas from the proof) to prove an $\Omega(n^{4/3})$ lower bound on the number of lines in $3$-metric spaces on $n$ points.

\begin{thm}\label{thm.3metric} The smallest number $t(n)$ of lines in a 3-metric space on $n$ points is $\Theta(n^{4/3})$.
\end{thm}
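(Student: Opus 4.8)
The upper bound is immediate: every $2$-metric space is a $3$-metric space, so the extremal examples of Theorem~\ref{thm.2metric} give $t(n)\le h(n)=O(n^{4/3})$. The entire difficulty lies in the lower bound $t(n)=\Omega(n^{4/3})$, which (as in Theorem~\ref{thm.2metric}) I would prove unconditionally, with no hypothesis on universal lines. Let $M=(V,\rho)$ be a $3$-metric space on $n$ points with diameter $D$. If $D\le 2$ then all positive distances lie in $\{1,2\}$, so $M$ is a $2$-metric space and Theorem~\ref{thm.2metric} already yields $\Omega(n^{4/3})$ lines. Thus I would assume $D=3$ throughout. Note that the midpoint trick used in Theorems~\ref{thm.main} and~\ref{thm.bounded_dists} degenerates here, since $D/2=3/2$ is not an attained distance; this is precisely why the case $D=3$ needs a separate treatment.

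The key structural observation I would isolate first is that \emph{every collinear triple of $M$ contains exactly one pair at distance $2$}: the only distance-multisets with values in $\{1,2,3\}$ that satisfy a betweenness equation are $\{1,1,2\}$ (a midpoint configuration $[xzy]$) and $\{1,2,3\}$ (a geodesic $[zxy]$), and each contains precisely one pair at distance $2$. Consequently, if $\Gamma$ denotes the graph of distance-$2$ pairs, the line generated by an edge $\{x,y\}$ of $\Gamma$ is exactly $\{x,y\}$ together with its \emph{midpoints} (points at distance $1$ from both $x$ and $y$) and its \emph{extensions} (points $z$ with $\{\rho(z,x),\rho(z,y)\}=\{1,3\}$); moreover no three points whose mutual distances all avoid $2$ can be collinear. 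I would also record, via Lemma~\ref{lem.no_dd} applied with $\delta=D=3>D/2$, that for every line $L$ the generator graph $H_3(L)$ of distance-$3$ pairs has maximum degree at most $1$, i.e. is a matching; this shows that distance-$3$ pairs by themselves can only be responsible for $O(n)$ lines, so the $\Omega(n^{4/3})$ bound must come from the distance-$1$/distance-$2$ structure, with distance-$3$ data playing only an auxiliary role.

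The plan is then to run a dichotomy that routes us back to the $2$-metric machinery. Let $B$ be the set of points incident to at least one distance-$3$ pair. Every point of $V\setminus B$ is at distance at most $2$ from every other point, so the subspace induced on $V\setminus B$ is a $2$-metric space; and since lines of a subspace are restrictions of lines of $M$ (as in the $w=2$ analysis in the proof of Theorem~\ref{thm.bounded_dists}), distinct subspace lines force distinct lines of $M$. Hence if $|V\setminus B|\ge n/2$, Theorem~\ref{thm.2metric} applied to this subspace gives $\Omega\bigl((n/2)^{4/3}\bigr)=\Omega(n^{4/3})$ lines and we are done. The remaining case $|B|>n/2$, where most points have a distance-$3$ partner, is where the real work lies. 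Within it I would further separate the degenerate regimes where collinear triples are scarce — in particular when $\Gamma$ has $O(n)$ edges the total number of collinear triples is $O(n^2)$, so all but $O(n^2)$ pairs generate their own private line and we get $\Omega(n^2)\gg n^{4/3}$ lines outright — from the bulk regime in which $\Gamma$ carries a positive density of edges.

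The hard part, and the main obstacle, is the bulk regime: here I would re-run the Kővári–Sós–Turán / forbidden-configuration argument underlying Theorem~\ref{thm.2metric}, treating the edges of $\Gamma$ as the primary generators. Assuming $M$ had $o(n^{4/3})$ lines, a convexity/pigeonhole step would force very many distance-$2$ pairs to share a single line $L$; one then wants to extract a forbidden bipartite configuration (two points each serving as a common midpoint of two distinct shared pairs) whose existence contradicts inner transitivity together with the fact that the endpoints of all these pairs must be exactly the non-midpoint part of the same set $L$. The genuine complication, absent in the $2$-metric case, is that the extension points (the distance-$(1,3)$ attachments) mean a line is no longer simply a common neighbourhood, so the counting must track midpoints and extensions separately and verify that the extensions cannot be used to evade the forbidden configuration; one must also check that the matching-type lines coming from distance-$3$ pairs do not interfere with the estimate. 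I expect this bookkeeping — adapting the Chiniforooshan–Chv\'atal incidence estimate so that the extra $\{1,2,3\}$ collinearities are absorbed rather than exploited by the adversary — to be the technically delicate core of the proof, after which the bound $\Omega(n^{4/3})$, matching the upper bound, follows.
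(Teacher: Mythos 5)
Your upper bound and your reduction of the case $D\le 2$ to Theorem~\ref{thm.2metric} are fine, but the lower bound — which you yourself identify as ``the entire difficulty'' — is not actually proved. The proposal ends by announcing that you would ``re-run the K\H{o}v\'ari--S\'os--Tur\'an / forbidden-configuration argument'' and that you ``expect this bookkeeping \ldots to be the technically delicate core of the proof''; that core is precisely what is missing, so the bulk regime is a plan, not an argument. There are also two concrete defects in the surrounding dichotomy. First, it is not exhaustive: splitting into ``$\Gamma$ has $O(n)$ edges'' versus ``$\Gamma$ carries a positive density of edges'' leaves out, e.g., the case $|E(\Gamma)|\approx n^{3/2}$. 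Second, the sparse case as stated is constant-dependent: with $|E(\Gamma)|\le cn$ you get at most $cn^{2}$ collinear triples, hence at most $3cn^{2}$ pairs lying in some collinear triple, and this only leaves $\Omega(n^{2})$ private two-point lines when $3c<1/2$; for an unspecified constant $c$ the claim ``all but $O(n^{2})$ pairs generate their own private line'' gives nothing, since there are only $\binom{n}{2}$ pairs in total.

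It is worth knowing that the missing core can be avoided entirely; the paper's proof never adapts the Chiniforooshan--Chv\'atal incidence machinery. It pigeonholes the $\binom{n}{2}$ pairs into three cases by distance. If at least a third of the pairs are at distance $1$, some point $a$ has $|X_a|\ge (n-1)/3$ points at distance $1$, and $(X_a,\rho)$ is a $2$-metric space, so Theorem~\ref{thm.2metric} applies as a black box. If at least a third of the pairs are at distance $\delta\in\{2,3\}$, then either no line is generated by more than $n^{2/3}$ such pairs (and we are done), or some line $L$ has $s>n^{2/3}$ generating pairs at distance $\delta$; since $\delta>D/2$, Lemma~\ref{lem.no_dd} forces $H_\delta(L)$ to be a matching $\{a_1,b_1\},\dots,\{a_s,b_s\}$. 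For $\delta=2$ the endpoint distances $\rho(a_i,a_j)$ must lie in $\{1,3\}$, and a parity check shows no sum of two such values lies in $\{1,3\}$, so no triple $\{a_i,a_j,a_k\}$ is collinear and the $\binom{s}{2}$ lines $\ov{a_ia_j}$ are distinct. For $\delta=3$ one picks, for each $i<j$, a point $x_{ij}\in\{a_j,b_j\}$ with $\rho(a_i,x_{ij})=2$ and checks (again via Lemma~\ref{lem.no_dd}) that the lines $\ov{a_ix_{ij}}$ are pairwise distinct. Either way one gets $\binom{s}{2}=\Omega(n^{4/3})$ lines directly. So the structural facts you did isolate (the $\{1,1,2\}$/$\{1,2,3\}$ classification and the matching property of $H_3(L)$) are correct and are in the right spirit, but the route from them to the bound goes through this short matching-plus-parity argument, not through a reworked incidence estimate.
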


\begin{proof} Because any 2-metric space is itself a 3-metric space, the upper bound (that is, the fact that $t(n) \in O(n^{4/3})$) follows from Theorem \ref{thm.2metric}. We are now going to show $t(n) \in \Omega(n^{4/3})$.
Let $(V, \rho)$ be a 3-metric space with $|V| = n$, and consider the $\binom{n}{2}$ pairs of different points in $V$. We have three cases.

{\em Case 1.} At least $\binom{n}{2} / 3$ pairs are at distance 1. Then there is at least one point $a$ such that the set
$X_a = \{v \in V : \rho(a, v) = 1\}$
has size $|X_a| \geq (n-1)/3$. For any two points $x_1, x_2 \in X_a$, $\rho(x_1, x_2) \leq \rho(a, x_1) + \rho(a, x_2) = 2$.
So the metric space $(X_a, \rho)$ is a 2-metric space.
By Theorem \ref{thm.2metric}, there are at least $\Omega(n^{4/3})$ lines
in $(X_a, \rho)$. Note that for any $x_1, x_2 \in X_a$, the intersection of $X_a$ with  the line generated by $x_1$ and $x_2$ in $(V, \rho)$
is exactly the line they generate in $(X_a, \rho)$. So we have $\Omega(n^{4/3})$ lines in $(V, \rho)$.

{\em Case 2.} At least $\binom{n}{2} / 3$ pairs are at distance 2. Consider the lines generated by these pairs. If no line is generated by more than
$n^{2/3}$ such pairs then we have at least $\Omega(n^{4/3})$ lines. Otherwise, there is a line $L$ such that the generator graph $H_2(L)$ has $s > \lfloor n^{2/3} \rfloor$ edges.
By Lemma \ref{lem.no_dd}, $H_2(L)$ has no vertex of degree greater than one. We can enumerate the edges of $H_2(L)$ as $\{a_1, b_1\}, \ldots, \{a_s, b_s\}$. Since none of the distances $\rho(a_i, a_j)$ can be equal to 2 (otherwise $\{a_i,a_j,b_i\}$ is not collinear), we have $\rho(a_i, a_j) \in \{1,3\}$ for any $i \neq j$. By parity,
for any distinct $i$, $j$, $k$, $[a_ia_ja_k]$ does not hold. Thus
\[\ov{a_ia_j} \cap \{a_1, \dots, a_s\} = \{a_i,a_j\}.\]
We have $\binom{s}{2} \in \Omega(n^{4/3})$ distinct lines $\ov{a_ia_j}$ ($1 \le i < j\le s$).

{\em Case 3.} At least $\binom{n}{2} / 3$ pairs are at distance 3. Consider the lines generated by these pairs. If no line is generated by more than
$n^{2/3}$ such pairs then we have at least $\Omega(n^{4/3})$ lines. Otherwise, there is a line $L$ such that the generator graph $H_3(L)$ has
$s > \lfloor n^{2/3} \rfloor$ edges.
By Lemma \ref{lem.no_dd}, $H_3(L)$ has no vertex of degree greater than one and we call them $\{a_1,b_1\}, \dots, \{a_s,b_s\}$.
Let $X = \bigcup_{i=1}^{s}\{a_i, b_i\}$.

Observe that, for any $j \le s$, since $\rho(a_j,b_j)=3$ and $X \subseteq \ov{a_jb_j}$, we have that:
\begin{equation}\label{eq.3matching_gen}
\forall x \in X \setminus \{a_j, b_j\}, \text{ either } \rho(a_j, x) = 2 \text{ or } \rho(b_j, x) = 2.
\end{equation}

For any $1 \le i < j \le s$, set $x_{ij} = a_j$ if $\rho(a_i,a_j)=2$, otherwise set $x_{ij}=b_j$.
So, by (\ref{eq.3matching_gen}), $\rho(a_i,x_{ij})=2$.
Now, for each $1 \le i<j\le s$ set $L_{ij}=\ov{a_ix_{ij}}$.
Since every three points in $\{a_i, a_j, b_i, b_j\}$ are collinear, we clearly have $\{a_i,a_j,b_i,b_j\} \subseteq L_{ij}$.
On the other hand, consider any $r \in \{1, \ldots, k\} \setminus \{i, j\}$.
By (\ref{eq.3matching_gen}), one of $\rho(a_i,a_r)$ or $\rho(a_i,b_r)$ equals two and thus, by Lemma~\ref{lem.no_dd}, at least one of $a_r$ and $b_r$ is not in $L_{ij}$.
 Therefore, we conclude that all the lines $L_{ij}$ for $1 \le i<j\le s$ are pairwise distinct. This gives $\binom{s}{2} \in \Omega(n^{4/3})$ distinct lines.
\end{proof}

\section{Pairs generating the same line}\label{sect.pairs}

In this section, we prove some facts about the relationships between pairs of points in a pseudometric betweenness that generate the same line.
These technical results will be used in the next section to prove bounds on the number of lines in graph metrics; they also seem likely to be of further use in proving theorems about lines in pseudometric betweennesses and in metric spaces.

We start with an easy lemma used as a tool in several proofs.

\begin{lem}\label{fact.center} Let $\mc B$ be a pseudometric betweenness on a set $V$ and let $a$, $b$, $c$ and $x$ be four distinct points in $V$. If $[axb]$, $[bxc]$, and $[cxa]$, then $\{a, b, c\}$ is not collinear.
\end{lem}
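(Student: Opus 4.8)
The plan is to argue by contradiction: I assume that $\{a,b,c\}$ is collinear and derive a violation of the asymmetry axiom (M2). The first observation is that the three hypotheses $[axb]$, $[bxc]$, $[cxa]$ are invariant under the cyclic relabelling $a\mapsto b\mapsto c\mapsto a$ (with $x$ fixed): this permutation sends $[axb]$ to $[bxc]$, $[bxc]$ to $[cxa]$, and $[cxa]$ to $[axb]$. Since the same permutation cyclically permutes the three collinearity patterns $[abc]$, $[bca]$, $[cab]$ that define collinearity of $\{a,b,c\}$, I may assume without loss of generality that it is $[abc]$ that holds.

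Next I combine $[abc]$ with the hypothesis $[axb]$. All four points $a,b,c,x$ are distinct by hypothesis, so Fact~\ref{fact.pseudometric} applies. Reading $[axb]$ as an instance of the pattern $[a\,b\,c]$ and the assumed $[abc]$ as an instance of $[a\,c\,d]$ (the two relations share the initial point $a$ and meet at $b$), Fact~\ref{fact.pseudometric}(a) yields the geodesic $[axbc]$. By the definition of a geodesic (equivalently Fact~\ref{fact.pseudometric}(c) applied to the subsequence), the subtriple $(x,b,c)$ satisfies $[xbc]$.

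Finally I reach the contradiction: axiom (M2) applied to $[xbc]$ says that $[bxc]$ cannot hold. But $[bxc]$ is one of the hypotheses, which is the desired contradiction and completes the proof.

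The argument is short, and the only real decision point is the second step: recognising that $[axb]$ together with the assumed $[abc]$ forces $x$ to lie between $b$ and $c$ in the \emph{opposite} sense to the hypothesis $[bxc]$. Geometrically this is the statement that if $b$ separates $a$ from $c$ and $x$ lies between $a$ and $b$, then $x$ sits on the $a$-side of $b$, whereas $[bxc]$ would place it on the $c$-side. The symmetry reduction at the start is what keeps the casework manageable; without it one would repeat essentially the same computation for each of the three collinearity patterns.
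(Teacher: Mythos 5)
Your proposal is correct and follows essentially the same route as the paper: assume collinearity, reduce by symmetry to the case $[abc]$, splice $[axb]$ with $[abc]$ via Fact~\ref{fact.pseudometric}(a) to get the geodesic $[axbc]$, and contradict the hypothesis $[bxc]$ through (M2). You merely spell out the symmetry reduction and the invocation of (M2) that the paper leaves implicit.
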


\begin{proof}
We may assume, without loss of generality, that $[abc]$.
Since $[axb]$, we have $[axbc]$, a contradiction to $[bxc]$.
\end{proof}

\begin{defi}Let $\mc B$ be a pseudometric betweenness on a set $V$.  For two distinct points $a$ and $b$ in $V$,
define
\[ I(a, b) = \{x \in V : [axb]\} \text{ and } O(a, b) = \{x \in V : [xab] \mbox{ or } [abx]\}. \]
\end{defi}

Thus the line $\ov{ab}$ is
\[ \ov{ab} = \{a, b\} \cup I(a, b) \cup O(a, b).\]

\begin{defi} For four distinct points $a$, $b$, $c$, and $d$ in a pseudometric betweenness, we call the $4$-tuple $(a, b, c, d)$ a \emph{parallelogram} if
\[[abc],\,  [bcd],\,  [cda],\, \mbox{and } [dab].\]
\end{defi}

It is clear from the definition that, if a sequence is a parallelogram, it remains a parallelogram if we reverse it, or if we move the first element of the sequence to the end.

\begin{defi} Let $a$, $b$, $c$, and $d$ be four distinct points in a pseudometric betweenness. We call the two pairs $\{a, b\}$ and $\{c, d\}$ {\em parallel} if $(a, b, c, d)$ or $(a, b, d, c)$ is a parallelogram; we call $\{a, b\}$ and $\{c, d\}$ {\em antipodal} if $(a, c, b, d)$ is a parallelogram.
\end{defi}

\begin{defi}
Let $\mc B$ be a pseudometric betweenness on a set $V$,  and let $a$, $b$, $x$ and $y$ be  points in $V$ such that $a \neq b$, $x \neq y$, $\{a, b\} \neq \{x, y\}$ and  $\ov{ab} = \ov{xy}$. We say that the pairs $\{a,b\}$ and $\{x,y\}$ are
\begin{itemize}
\item $\alpha$-related if $\{a,b,x,y\}$ (a set of size $3$ or $4$) is geodesic;
\item $\beta$-related if $\{a, b\}$ and $\{x, y\}$ are parallel and $I(a, b) = I(x, y)=\emptyset$;
\item  $\gamma$-related if  $\{a, b\}$ and $\{x, y\}$ are antipodal and $O(a, b) = O(x, y)=\emptyset$.
\end{itemize}
Moreover, for a pair of points $\{q,r\}$, we call $\{q, r\}$ a $\beta$-pair if there is  another pair $\{s, t\}$ that is $\beta$-related to it, and we call $\{q, r\}$ a $\gamma$-pair if there is  another pair $\{s, t\}$ that is $\gamma$-related to it.
\end{defi}

The following lemma gives a very strong property satisfied by any two pairs of points defining the same line.
Even though it is valid for any pseudometric betweenness, we have only been able to use it efficiently in graph metrics.
Anyway, we give the proof in its whole generality, hoping it will be useful in future work.

\begin{lem} \label{lem.betagamma}
Let $\mc B$ be a pseudometric betweenness on a set $V$,  and let $a$, $b$, $x$ and $y$ be  points in $V$ such that $a \neq b$, $x \neq y$ and $\{a, b\} \neq \{x, y\}$. If $\ov{ab} = \ov{xy}$, then $\{a,b\}$ and $\{x,y\}$ are $\alpha$-related, $\beta$-related or $\gamma$-related.
\end{lem}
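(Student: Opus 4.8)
The plan is to run a case analysis on the betweenness relations among the four points, using only the axioms (M0)--(M3) and the chaining rules of Fact~\ref{fact.pseudometric}, since in a pseudometric betweenness no distances are available. First I would dispose of the degenerate case in which two of $a,b,x,y$ coincide (say $a=x$): then $\{a,b,y\}$ is a three-point set with $y\in\ov{ab}$, hence collinear, hence geodesic, so $\{a,b\}$ and $\{x,y\}$ are $\alpha$-related. Thus I may assume $a,b,x,y$ are four distinct points, and I would assume that $\{a,b,x,y\}$ is \emph{not} geodesic (otherwise the pairs are already $\alpha$-related), aiming to deduce that they are $\beta$- or $\gamma$-related.

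The organizing principle is to record, for each of $x$ and $y$, whether it lies in $I(a,b)$ (``inner'') or $O(a,b)$ (``outer''); this is exhaustive because $x,y\in\ov{ab}=\{a,b\}\cup I(a,b)\cup O(a,b)$ and the four points are distinct. Symmetrically I classify $a$ and $b$ inside $\ov{xy}=\ov{ab}$. The heart of this step is to show that every ``impure'' configuration collapses to a geodesic on all four points, and is therefore already excluded. For instance, if $x,y\in I(a,b)$ but $[axy]$ and $[ayb]$ both hold, then Fact~\ref{fact.pseudometric}(a) gives $[axyb]$; and if $x,y$ are both outer beyond the same end, say $[abx]$ and $[aby]$, then the forced relation $a\in\ov{xy}$ (which we have only because $\ov{ab}=\ov{xy}$, not merely because each pair lies on the other's line) together with a short Fact~\ref{fact.pseudometric} chain again orders all four points into a geodesic. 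Ruling out the orderings forbidden by (M1) and (M2) in this way leaves exactly two surviving configurations: $x,y\in I(a,b)$ with $a,b\in I(x,y)$, and $x,y\in O(a,b)$ with $a,b\in O(x,y)$.

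In the first surviving configuration I would read off the four relations $[axb]$, $[ayb]$, $[xay]$, $[xby]$, which say precisely that $(a,x,b,y)$ is a parallelogram, so $\{a,b\}$ and $\{x,y\}$ are antipodal. It then remains to verify the emptiness condition $O(a,b)=O(x,y)=\emptyset$ demanded by $\gamma$-relatedness. Here the full hypothesis $\ov{ab}=\ov{xy}$ is essential: a putative point $z$ with $[abz]$ lies in $\ov{ab}=\ov{xy}$, and chaining $[axb]$ with $[abz]$ via Fact~\ref{fact.pseudometric}(a) gives $[xbz]$; examining the three possible positions $[zxy]$, $[xzy]$, $[xyz]$ of $z$ on $\ov{xy}$, each is closed off by a further Fact~\ref{fact.pseudometric} chain that forces two incompatible ``middles'' of a single triple, contradicting (M2). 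The case $[zab]$ is symmetric, and the identical argument with the roles of $\{a,b\}$ and $\{x,y\}$ exchanged gives $O(x,y)=\emptyset$, so we are $\gamma$-related. The second surviving configuration is handled by the mirror-image argument: the relations $[abx]$, $[bxy]$, $[xya]$, $[yab]$ exhibit $(a,b,x,y)$ as a parallelogram, so the pairs are parallel, and the same style of chaining shows $I(a,b)=I(x,y)=\emptyset$, giving $\beta$-relatedness.

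The main obstacle I anticipate is twofold. The casework, although elementary, is large and must be navigated carefully, because only betweenness is available: each incompatibility has to be closed by an explicit application of (M0)--(M3) or of Fact~\ref{fact.pseudometric}, rather than by a numerical inequality, so the one-dimensional intuition that a line is totally ordered is unavailable (indeed $\beta$ and $\gamma$ are genuinely non-linear phenomena). Systematic use of symmetry---reversing a pair and swapping the roles of $\{a,b\}$ and $\{x,y\}$---will be needed to keep the number of cases manageable. The genuinely delicate point, however, is establishing the emptiness conditions: showing that the parallelogram structure together with $\ov{ab}=\ov{xy}$ leaves no room for an extra inner (respectively outer) point. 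This is exactly where the equality of the two \emph{lines}, and not merely the weaker fact that each generating pair lies on the other's line, is doing the work, and where the argument is least automatic.
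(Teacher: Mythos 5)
Your proposal is correct and takes essentially the same route as the paper's proof: your two ``surviving pure configurations'' are exactly the paper's Case 1 (some relation among $[axb]$, $[ayb]$, $[xay]$, $[xby]$ holds, forcing the antipodal parallelogram and then $O(a,b)=O(x,y)=\emptyset$) and Case 2 (none holds, forcing the parallel parallelogram and then $I(a,b)=I(x,y)=\emptyset$), with the same mechanism throughout --- chaining via Fact~\ref{fact.pseudometric}(a)--(b) on triples made collinear by $\ov{ab}=\ov{xy}$, closed off by (M2). The only cosmetic differences are that you organize the casework by an inner/outer classification of the four points (rather than the paper's dichotomy on the four antipodal relations) and that you inline the three-subcase contradiction that the paper packages as Lemma~\ref{fact.center}.
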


\begin{proof}
If two of the points in $\{a, b, x, y\}$ are the same, say $a = y$, then trivially $\{a,b\}$ and $\{a,x\}$ are $\alpha$-related.
So we may assume $\{a, b, x, y\}$ consists of four points and is not geodesic (otherwise $\{a,b\}$ and $\{x,y\}$ are $\alpha$-related and we are done).
Note that because $\ov{ab} = \ov{xy}$, any three among the four points are collinear.
\medskip

{\em Case 1:} One of $[axb]$, $[ayb]$, $[xay]$ or $[xby]$ holds.
In this case, we show that $\{a,b\}$ and $\{x,y\}$ are $\gamma$-related.

We first show that any one of these would imply the other three. Without loss of generality, we assume $[axb]$.
Observe that $\{a,b,y\}$ is collinear.
Since $[axb]$, if $[aby]$, then $[axby]$ and if $[bay]$, then $[bxay]$, a contradiction in both cases.
So we may assume $[ayb]$.
Next, we use the fact that $\{a, x, y\}$ is collinear.
If $[ayx]$, we have $[ayxb]$, and if $[axy]$, since $[ayb]$, we have $[axyb]$, a contradiction in both cases.
So we may assume that $[xay]$.
Similarly, we have $[xby]$.

It remains to prove that $O(a,b)=O(x,y)=\emptyset$.
Suppose not; without loss of generality we may assume that there exists a point $u \in V$ such that $[abu]$.
Observe that $u \in \ov{ab}=\ov{xy}$ and thus, in particular, $\{x,y,u\}$ is collinear.
Since $[axb]$ and $[ayb]$, we have $[axbu]$ and $[aybu]$.
So, we have $[xbu]$, $[ybx]$, $[uby]$   and thus, by Lemma~\ref{fact.center}, $\{x,y,u\}$ is not collinear, a contradiction.
\medskip

{\em Case 2:} None of $[axb]$, $[ayb]$, $[xay]$ or $[xby]$ holds. In this case we prove that $\{a,b\}$ and $\{x,y\}$ are $\beta$-related.

We have that $\{a,b\} \subseteq O(x,y)$ and $\{x,y\} \subseteq O(a,b)$.
Assume without loss of generality that $[xya]$.
We prove first that $[yab]$, $[abx]$ and $[bxy]$.

We have that $\{x, a, b\}$ is collinear.
Since $[ayx]$, if $[bax]$, then $[bayx]$, a contradiction.
Hence, since $x \in O(a, b)$, we have $[abx]$.
We have that $\{x,y,b\}$ is collinear.
If $[byx]$, then $[abyx]$, a contradiction.
Hence, since $b \in O(x, y)$, we have $[bxy]$.
Finally, we have that $\{a,b,y\}$ is collinear.
Since $[bxy]$, if $[aby]$, then $[abxy]$, a contradiction.
Hence, since $y \in O(a, b)$, we have $[yab]$.

So, $(x, y, a, b)$ is a parallelogram and it remains to show that $I(a,b)=I(x,y)=\emptyset$.
Suppose not; without loss of generality we may assume that there exists a point $u \in V$ such that $[aub]$.
Hence, since $[abx]$ and $[bay]$, we have $[aubx]$ and $[buay]$.
Since $u \in \ov{ab}=\ov{xy}$, the points $\{x, y, u\}$ are collinear. So either $[xyu]$, $[yxu]$, or $[xuy]$ holds.
If $[xyu]$, since $[yau]$, we have $[xyau]$, a contradiction to $[aux]$.
If $[yxu]$, since $[yub]$, we have $[yxub]$, a contradiction to $[ubx]$.
If $[xuy]$, since $[uay]$, we have $[xuay]$, a contradiction to $[xya]$.
\end{proof}

\begin{lem}\label{lem.gamma_clique} In any pseudometric betweenness,

(a) no pair of points is both a $\beta$-pair and a $\gamma$-pair;

(b) any two $\gamma$-pairs that generate the same line are disjoint;

(c) if $\{a, b\}$ and $\{c, d\}$ are distinct $\gamma$-pairs with $\ov{ab} = \ov{cd}$, then they are $\gamma$-related.
\end{lem}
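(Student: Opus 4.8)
The plan is to handle the three parts in order, leaning on the trichotomy of Lemma~\ref{lem.betagamma} and on the single observation that the defining conditions of $\beta$- and $\gamma$-pairs are really statements about the sets $I(\cdot,\cdot)$ and $O(\cdot,\cdot)$. The fact I would keep in front throughout is that if $\{a,b\}$ is a $\gamma$-pair then $O(a,b)=\emptyset$, so $\ov{ab}=\{a,b\}\cup I(a,b)$ and every other point of the line lies strictly between $a$ and $b$; dually, a $\beta$-pair satisfies $I(a,b)=\emptyset$.

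For part (a) I would show that the two properties place incompatible demands on $O(a,b)$. If $\{a,b\}$ is a $\beta$-pair it is parallel to some $\{x,y\}$, so $(a,b,x,y)$ or $(a,b,y,x)$ is a parallelogram; either way one of the parallelogram relations is $[abx]$ or $[aby]$, which already exhibits a point of $O(a,b)$, so $O(a,b)\neq\emptyset$. A $\gamma$-pair has $O(a,b)=\emptyset$, and these two conclusions contradict each other.

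For part (b), suppose two distinct $\gamma$-pairs with a common line $L$ share a point; after relabelling, write them as $\{a,b\}$ and $\{a,d\}$ with $b\neq d$, so $O(a,b)=O(a,d)=\emptyset$. Since $d\in L=\ov{ab}=\{a,b\}\cup I(a,b)$ and $d\notin\{a,b\}$, we get $[adb]$; symmetrically $b\in\ov{ad}=\{a,d\}\cup I(a,d)$ gives $[abd]$. I would then invoke the partial order based at $a$ introduced in the proof of Theorem~\ref{thm.pmb} (namely $x\prc y$ iff $[axy]$ or $x=y$): here $[adb]$ says $d\prc b$ and $[abd]$ says $b\prc d$, so antisymmetry forces $b=d$, a contradiction. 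Hence two $\gamma$-pairs on the same line cannot meet.

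For part (c), let $\{a,b\}$ and $\{c,d\}$ be distinct $\gamma$-pairs with $\ov{ab}=\ov{cd}$. By part (b) they are disjoint, so $\{a,b,c,d\}$ has four elements and the hypotheses of Lemma~\ref{lem.betagamma} are met: the two pairs are $\alpha$-, $\beta$-, or $\gamma$-related. The $\beta$-case is excluded at once by part (a), since $\{a,b\}$ is already a $\gamma$-pair. To rule out the $\alpha$-case I would use that a four-point geodesic $[p_1p_2p_3p_4]$ admits no splitting into two pairs both having empty $O$-set: the only pair avoiding the other two points in its $O$-set is the pair of extremes $\{p_1,p_4\}$, but then the inner pair $\{p_2,p_3\}$ has both $p_1$ and $p_4$ in its $O$-set (via $[p_1p_2p_3]$ and $[p_2p_3p_4]$), and every other splitting already places some $p_i$ in the $O$-set of one of the two pairs. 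As both our pairs have empty $O$-set, $\alpha$ is impossible, leaving exactly the $\gamma$-case, which is the assertion. The only genuine case analysis I expect is this exclusion of the $\alpha$-case; everything else is immediate once the relevant betweenness relation is read off as membership in an $O$-set. The delicate point is therefore the short enumeration of the essentially distinct ways to partition four collinear points, checking in each that some point is forced into the $O$-set of one pair — but this is brief, and the rest is bookkeeping.
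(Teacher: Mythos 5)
Your proof is correct and takes essentially the same approach as the paper: part (a) exploits the incompatibility of the $I$/$O$ conditions (you show a $\beta$-pair has $O(a,b)\neq\emptyset$, while the paper dually notes a $\gamma$-pair has $I(a,b)\neq\emptyset$), part (b) derives contradictory betweenness relations from a shared point, and part (c) applies the trichotomy of Lemma~\ref{lem.betagamma}, kills the $\beta$-case via (a), and kills the $\alpha$-case by the same observation as the paper, namely that a pair with empty $O$-set must be the extremes of any four-point geodesic, which forces the other pair's $O$-set to be non-empty. There are no gaps.
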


\begin{proof}
(a) If a pair $\{a, b\}$ is a $\gamma$-pair, then $I(a, b) \neq \emptyset$ and therefore it cannot be a $\beta$-pair.

(b) Suppose that $\{a, b\}$ and $\{a, c\}$ are $\gamma$-pairs such that $\ov{ab} = \ov{ac}$. Then $\{a, b, c\}$ is collinear and one of $[abc]$, $[acb]$ and $[bac]$ holds. But then $O(a, b)$ and $O(a, c)$ cannot both be empty, contradicting the definition of a $\gamma$-pair.

(c) Suppose $\{a, b\}$ and $\{c, d\}$ are distinct $\gamma$-pairs with $\ov{ab} = \ov{cd}$. These pairs are not $\beta$-related by (a). It follows from (b) that $a$, $b$, $c$ and $d$ are four distinct points. If $\{a, b, c, d\}$ is geodesic, then, since $O(a, b) = \emptyset$, we have $[acdb]$ or $[adcb]$. In either case, $O(c,d) \neq \emptyset$, a contradiction. Thus, by Lemma~\ref{lem.betagamma}, $\{a, b\}$ and $\{c, d\}$ are $\gamma$-related.
\end{proof}

\begin{lem}\label{lem.gamma_no_mid} Let $\mc B$ be a pseudometric betweenness in which $\{a, b\}$, $\{u, v\}$ and $\{x, y\}$ are three pairs of points that are pairwise $\gamma$-related.
If  $[axu]$, then $\{a,y,u\}$ is not collinear.
\end{lem}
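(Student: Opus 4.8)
The plan is to argue by contradiction, first converting the hypotheses into a fixed list of betweenness relations and then showing that collinearity of $\{a,y,u\}$ is incompatible with them.

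First I would record what pairwise $\gamma$-relatedness provides. Since the three pairs are distinct $\gamma$-pairs generating one common line, Lemma~\ref{lem.gamma_clique}(b) makes them pairwise disjoint, so $a,b,u,v,x,y$ are six distinct points. Antipodality means that $(a,x,b,y)$, $(a,u,b,v)$ and $(u,x,v,y)$ are parallelograms. A short check shows that the set of betweenness relations encoded by a parallelogram is unchanged if we swap the two points of either pair (the swapped tuple is a dihedral image of the original $4$-tuple), so there is no labelling ambiguity and we obtain a definite list of twelve relations. The ones I expect to need are $[xby]$ and $[yax]$ from the first parallelogram, $[aub]$ from the second, and $[yux]$ from the third, together with the hypothesis $[axu]$.

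The tools are Fact~\ref{fact.pseudometric}(a), which splices $[pqr]$ and $[prs]$ into the geodesic $[pqrs]$ whenever the two triples share their first point and the last point of the first is the middle point of the second, and the fact that (M1) and (M2) force a triple to have at most one ``middle'' element; in particular $[axu]$ rules out $[aux]$ and $[axy]$ rules out $[xay]$. A pitfall to avoid is concatenating two geodesics that merely overlap in two points (for instance $[yub]$ and $[ubv]$): this is invalid in a pseudometric betweenness, so each splice must have exactly the shape Fact~\ref{fact.pseudometric}(a) permits. Assuming $\{a,y,u\}$ is collinear, (M1) leaves the three cases $[uay]$, $[yua]$ and $[ayu]$.

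The two ``flat'' cases fall quickly. For $[uay]$, splicing $[yau]$ with $[yux]$ yields $[yaux]$, hence $[aux]$, contradicting the uniqueness of the middle of $\{a,x,u\}$. For $[yua]$, splicing the hypothesis $[axu]$ with $[auy]$ yields $[axuy]$, hence $[axy]$, contradicting $[xay]=[yax]$. I expect the case $[ayu]$ to be the real obstacle, because there the single splices do not close the argument by themselves. My plan there is to splice $[ayu]$ with $[aub]$ to obtain $[yub]$ and to splice $[axu]$ with $[aub]$ to obtain $[xub]$; together with $[yux]$ these say that $u$ lies between each of the three pairs drawn from $\{x,y,b\}$, so Lemma~\ref{fact.center} (with $u$ as the common centre) forces $\{x,y,b\}$ to be non-collinear, contradicting $[xby]$. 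Recognising that $u$ is the point that should play the centre, and that $b$ rather than $v$ is the correct third point to import, is the key step.
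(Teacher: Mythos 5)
Your proof is correct and takes essentially the same approach as the paper's: argue by contradiction, split into three cases according to which of $a$, $u$, $y$ is the middle point of the collinear triple, and dispose of each case by splicing geodesics (Fact~\ref{fact.pseudometric}) against the parallelogram relations supplied by $\gamma$-relatedness. The only difference is cosmetic: in the case $[ayu]$ the paper derives $[uby]$ up front (from $[axu]$, $[aub]$, $[xby]$) and contradicts it directly with $[yub]$, whereas you reach the same contradiction by routing $[yub]$, $[xub]$ and $[yux]$ through Lemma~\ref{fact.center}; both are valid.
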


\begin{proof}
Assume that $[axu]$ holds. Since $[aub]$, we have $[xub]$ and thus, since $[xby]$, we have $[uby]$.

By way of contradiction, assume that $\{a, y, u\}$ is collinear. Then either $[auy]$, $[ayu]$ or $[uay]$ holds.
If $[auy]$, then since $[ayb]$, we have $[uyb]$, contradicting $[uby]$.
If $[ayu]$, then since $[aub]$, we have $[yub]$, also contradicting $[uby]$.
Finally, if $[yau]$, then since $[axu]$, we have $[yxu]$, contradicting the fact that $\{u, v\}$ and $\{x, y\}$ are $\gamma$-related.
\end{proof}

The final lemma of this section is particular to metric spaces.

\begin{lem}
Let $a, b, c, d$ be distinct points in a metric space. The sequence $(a, b, c, d)$ is a parallelogram if and only if
\begin{itemize}
 \item $\rho(a, b) = \rho(c, d)$,
 \item $\rho(a, d) = \rho(b, c)$, and
 \item $\rho(a, c) = \rho(b, d)= \rho(a, b) + \rho(b, c)$.
\end{itemize}
\end{lem}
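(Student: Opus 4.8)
The plan is to prove both implications by directly translating each betweenness relation into the corresponding additive distance equation, using that in a metric space $[pqr]$ means precisely $\rho(p,q)+\rho(q,r)=\rho(p,r)$. The distinctness requirement (M0) is automatic here, since $a,b,c,d$ are assumed distinct and hence every triple among them consists of three distinct points; so in each case it suffices to verify the relevant equation.

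For the forward direction I would start from the four relations $[abc]$, $[bcd]$, $[cda]$ and $[dab]$ that define the parallelogram and write them out, using the symmetry of $\rho$, as
\begin{gather*}
\rho(a,b)+\rho(b,c)=\rho(a,c),\qquad
\rho(b,c)+\rho(c,d)=\rho(b,d),\\
\rho(c,d)+\rho(a,d)=\rho(a,c),\qquad
\rho(a,d)+\rho(a,b)=\rho(b,d).
\end{gather*}
The key observation is that the first and third equations share the right-hand side $\rho(a,c)$, while the second and fourth share $\rho(b,d)$; equating each pair yields two linear relations among $\rho(a,b),\rho(b,c),\rho(c,d),\rho(a,d)$. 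Adding these two relations makes the $\rho(a,b)$ and $\rho(c,d)$ terms cancel and gives $\rho(b,c)=\rho(a,d)$, the second conclusion. Substituting this back into either relation then forces $\rho(a,b)=\rho(c,d)$, the first conclusion. Finally, plugging both equalities into the first and second equations gives $\rho(a,c)=\rho(b,d)=\rho(a,b)+\rho(b,c)$, which is the third conclusion.

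For the reverse direction I would abbreviate $s=\rho(a,b)=\rho(c,d)$ and $t=\rho(a,d)=\rho(b,c)$, so that the hypotheses assert $\rho(a,c)=\rho(b,d)=s+t$. Each of the four required betweenness relations is then an immediate check: $\rho(a,b)+\rho(b,c)=s+t=\rho(a,c)$ gives $[abc]$, $\rho(b,c)+\rho(c,d)=t+s=\rho(b,d)$ gives $[bcd]$, and $[cda]$, $[dab]$ follow by the same computation using the symmetry of $\rho$. Hence $(a,b,c,d)$ is a parallelogram.

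There is essentially no hard step in this lemma; the whole argument is bookkeeping with the definition of metric betweenness. The only point requiring a moment's care is organizing the four defining equations so that the two pairs sharing a common right-hand side are lined up, so that a single addition cleanly yields $\rho(b,c)=\rho(a,d)$; after that, everything is substitution.
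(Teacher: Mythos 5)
Your proof is correct and takes essentially the same route as the paper: both translate the four betweenness relations into additive distance equations, pair the two equations sharing the right-hand side $\rho(a,c)$ and the two sharing $\rho(b,d)$, and extract $\rho(a,b)=\rho(c,d)$ and $\rho(a,d)=\rho(b,c)$ by the same elementary algebra before reading off the third conclusion. The paper merely compresses this (calling the ``if'' direction obvious and omitting the addition/substitution step), but the argument is identical.
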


\begin{proof} The {\em if} direction is obvious. For the {\em only if} direction,
$[abc]$ and $[cda]$ imply that
\begin{equation}\label{eq.para001}
\rho(a, b) + \rho(b, c) = \rho(c, d) + \rho(d, a);
\end{equation}
$[bcd]$ and $[dab]$ imply that
\begin{equation}\label{eq.para002}
\rho(b, c) + \rho(c, d) = \rho(d, a) + \rho(a, b);
\end{equation}
(\ref{eq.para001}) and (\ref{eq.para002}) imply that $\rho(a, b) = \rho(c, d)$ and $\rho(a, d) = \rho(b, c)$; and that in turn implies
\[ \rho(a, c) = \rho(a, b) + \rho(b, c)  = \rho(b, c) + \rho(c, d) = \rho(b, d). \]
\end{proof}

%
%

\section{Bounds on graph metrics}\label{sect.graph}

The goal of this section is to prove Theorem~\ref{thm.d_graph}, which asserts that there is a constant $c$ such that any $n$-point metric space induced by a graph of diameter $D$ has at least $c(n/D)^{4/3}$ lines.
The proof is based on the fact that any two pairs of vertices are either $\alpha$-related, $\beta$-related or $\gamma$-related.

Here is a short sketch of the proof.
Let $(V,\rho)$ be a metric space induced by a graph of diameter $D$ with no universal line. We want to prove that it has $\Omega((n/D)^{4/3})$ lines.
So we may assume that there is a line $L$ generated by at least $\Omega(n^{2/3} D^{4/3})$ different pairs of points.
Moreover, since the graph is of diameter $D$, there exists a distance $d$ such that at least $\Omega(n^{2/3} D^{1/3})$ of the pairs generating $L$ are pairs at distance $d$, that is, $|E_d(L)| = \Omega(n^{2/3} D^{1/3})$.

At this point, we can prove that there is a subset of at least $\Omega(n^{2/3} D^{1/3})$ of these pairs that are either pairwise $\alpha$-related or are pairwise $\gamma$-related.
The former case is handle by Lemma~\ref{lem.graph_alpha_quad}, the second on by Lemma~\ref{lem.graph_gamma_quad}.

Before proving the two main lemmas, we prove the following lemma that resembles the intermediate value theorem.

\begin{lem}\label{lem.med} Let $(V, \rho)$ be a metric space induced by a graph $G$, let $a, b \in V$ such that $\rho(a, b) \ge 2$, and let $W$ be a walk in $G$ from $a$ to $b$.
There exists a vertex $u \in W \sm \{a,b\}$ such that $u \not\in O(a, b)$.
\end{lem}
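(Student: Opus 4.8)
The plan is to mimic the discrete intermediate value theorem using a single integer-valued potential along the walk. Write the walk as $W = (a = w_0, w_1, \dots, w_k = b)$ and set $d = \rho(a,b) \ge 2$. Define $g(i) = \rho(a, w_i) - \rho(b, w_i)$ for $0 \le i \le k$; the endpoints immediately give $g(0) = -d$ and $g(k) = d$.

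The first key step is to characterize membership in $O(a,b)$ in terms of $g$. Unwinding the definitions of $[w_i a b]$ and $[a b w_i]$ shows that $[w_i a b]$ holds exactly when $g(i) = -d$ and $[a b w_i]$ holds exactly when $g(i) = d$; moreover the triangle inequality gives $|g(i)| \le d$ for every $i$. Hence $w_i \in O(a,b)$ if and only if $g(i) \in \{-d, d\}$. A vertex $w_i$ with $g(i) \notin \{-d, d\}$ therefore automatically differs from both $a$ and $b$ (since $g(a) = -d$ and $g(b) = d$), so it lies in $W \setminus \{a, b\}$ and is not in $O(a, b)$ --- which is exactly what we want.

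The second key step is a Lipschitz bound: since $w_i w_{i+1}$ is an edge we have $\rho(w_i, w_{i+1}) = 1$, so by the triangle inequality both $\rho(a, \cdot)$ and $\rho(b, \cdot)$ change by at most $1$ across the step, giving $|g(i+1) - g(i)| \le 2$. Now suppose, toward a contradiction, that every interior vertex lies in $O(a,b)$. Then every $g(i)$ lies in $\{-d, d\}$; the sequence starts at $-d$ and ends at $d$, so there is a first index $j$ with $g(j) = d$, and necessarily $j \ge 1$ with $g(j-1) = -d$. At that step $|g(j) - g(j-1)| = 2d \ge 4$, contradicting the Lipschitz bound. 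Hence some interior $w_i$ satisfies $g(i) \notin \{-d, d\}$, and this $w_i$ is the required $u$.

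The crux --- and the reason the hypothesis $\rho(a,b) \ge 2$ is essential --- is precisely the gap between the two forbidden values $-d$ and $+d$: they are $2d$ apart, while $g$ can change by at most $2$ per step, so as soon as $d \ge 2$ the potential cannot leap directly from one forbidden value to the other. No delicate estimate is involved; the only care needed is in verifying the equivalence between the two betweenness conditions and the extreme values of $g$, and in observing that a good vertex is automatically distinct from $a$ and $b$.
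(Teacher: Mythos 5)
Your proof is correct, and it is essentially the paper's own argument: the paper also runs a discrete intermediate-value argument, extracting a path $(a,u_1,\dots,u_k,b)$ from the walk, assuming every internal vertex satisfies $[u_iab]$ or $[abu_i]$ (your $g(i)=-d$ or $g(i)=d$), locating the switch point where $[u_jab]$ and $[abu_{j+1}]$ hold for consecutive vertices, and deriving $\rho(a,b)\le 1$ from two triangle inequalities --- which is precisely your observation that a jump from $-d$ to $+d$ across a single edge violates the Lipschitz bound $|g(i+1)-g(i)|\le 2$. The only differences are cosmetic: you work on the walk directly via the potential $g$ and anchor at the endpoints $a,b$ themselves (which also handles repeated vertices automatically), whereas the paper first passes to an induced path and anchors at the neighbour $u_1$ of $a$ via $\rho(a,u_1)=1$.
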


\begin{proof}
Since $W$ is a walk from $a$ to $b$, there exists an induced path $P = (a,u_1, \dots, u_k, b)$ from $a$ to $b$ where $\{u_1, \dots, u_k\}$ are all in $W \sm \{a,b\}$.
For contradiction, we may assume that, for any $1 \le i \le k$, $[u_iab]$ or $[abu_i]$.

Since $\rho(a, u_1) = 1$, we cannot have $[abu_1]$, so $[u_1ab]$ holds. Similarly, we have $[abu_k]$.
So there exists a maximum integer $j$ such that $[u_jab]$, and $j < k$. We have $[abu_{j+1}]$.
Hence, we have:
\[
\begin{split}
&\rho(u_j, a)  + \rho(a,b) = \rho(u_j,b)\le \rho(u_j,u_{j+1})+\rho(u_{j+1},b)=1+\rho(u_{j+1},b)\\
&\rho(a,b)  + \rho(b,u_{j+1}) = \rho(a,u_{j+1})\le \rho(a, u_j)+\rho(u_j,u_{j+1})=\rho(a, u_{j})+1
\end{split}
\]
Which implies that $\rho(a,b)-1\le 1-\rho(a,b)$ and thus $\rho(a,b)\le 1$, a contradiction.
\end{proof}

\begin{lem}\label{lem.graph_alpha_quad} Let $G$ be a graph with diameter $D$ and $(V, \rho)$ the corresponding graph metric.
Let $Q$ and $l \geq 2$ be positive integers.
Let $\{a_1, b_1\}, \ldots, \{a_Q, b_Q\}$ be $Q$ pairs of vertices such that, for any $1 \le i \neq j \le Q$, the following hold:
\begin{itemize}
\item $\ov{a_ib_i}=\ov{a_jb_j}$,
\item $\{a_i,b_i\}$ and $\{a_j,b_j\}$ are $\alpha$-related and
\item $\rho(a_i,b_i)=\rho(a_j,b_j) = \ell$.
\end{itemize}
Then the number of lines in $(V,\rho)$ is at least $(1/2 - o(1)) (Q / D)^2$ as $Q/D \rightarrow \infty$.
\end{lem}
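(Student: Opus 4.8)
The plan is to linearize the configuration: I would encode each of the $Q$ equal-length pairs by an integer \emph{coordinate}, show that all these coordinates live in a window of width at most $D$, and then read off a quadratic family of lines from the pairs that share a single coordinate.

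First I would fix the pair $\{a_1,b_1\}$ as an origin. For any endpoint $u$ of any pair, the $\alpha$-relation between its pair and the first one makes $\{a_1,b_1,u\}$ geodesic, so I can set $p(u)=\rho(a_1,u)$ with a $+$ sign if $[a_1ub_1]$, $[a_1b_1u]$ or $u=b_1$, and a $-$ sign if $[ua_1b_1]$. Because $a_1,b_1,a_i,b_i$ all lie on one geodesic, the two coordinates of a pair differ by exactly $\ell=\rho(a_i,b_i)$; writing $l_i$ for the endpoint of smaller coordinate and $r_i$ for the larger gives $p(r_i)=p(l_i)+\ell$. A short case check with Fact~\ref{fact.pseudometric} shows that $|p(u)-p(v)|\le\rho(u,v)\le D$ for all endpoints $u,v$ (the two same-side cases are the triangle inequality, and the opposite-side case gives $[ua_1v]$, hence $|p(u)-p(v)|=\rho(u,v)$). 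Thus every coordinate lies in one integer interval of length at most $D$. Since the largest coordinate is attained at a right endpoint, every left endpoint satisfies $p(l_i)\le p_{\max}-\ell$, so the values $p(l_1),\dots,p(l_Q)$ occupy at most $D-\ell+1\le D-1$ distinct integers.

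By pigeonhole there is then a coordinate $c$ and a set $S$ of at least $Q/(D-1)\ge Q/D$ pairs with $p(l_i)=c$. Their left endpoints are distinct points, all at distance $|c|$ from $a_1$ on the same side and, symmetrically, all at a common distance from $b_1$. I would finish by proving that no three of the points $\{l_i:i\in S\}$ are collinear. Granting this, each line $\ov{l_il_j}$ meets $\{l_m:m\in S\}$ in exactly $\{l_i,l_j\}$, so the $\binom{|S|}{2}$ lines $\ov{l_il_j}$ are pairwise distinct, and $\binom{\lceil Q/D\rceil}{2}=(1/2-o(1))(Q/D)^2$ as $Q/D\to\infty$, which is the claimed bound.

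The non-collinearity statement is the heart of the argument and the step I expect to be hardest, since it does \emph{not} follow from equidistance alone. The two-point version is already instructive: if $[l_ia_1l_j]$ held, then combining it with the $\alpha$-relation order $[l_il_jr_i]$ through Fact~\ref{fact.pseudometric}(a) yields $[l_ia_1r_i]$, forcing $\rho(a_1,r_i)=\ell-c$, whereas $p(r_i)=c+\ell$ gives $\rho(a_1,r_i)=c+\ell$, a contradiction once $c\neq0$; the orderings $[a_1l_il_j]$ and $[a_1l_jl_i]$ are ruled out directly by $\rho(a_1,l_i)=\rho(a_1,l_j)$, so $\{a_1,l_i,l_j\}$ is never collinear. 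For three points I would assume $[l_il_jl_k]$ and push the same idea: the pairwise $\alpha$-relations pin down all the offsets $\rho(l_i,l_j)$ and place the parallel right endpoints $r_i,r_j,r_k$ at coordinate $c+\ell$, and I would then chase betweenness relations among $\{l_i,l_j,l_k,r_i,r_j,r_k,a_1\}$ using inner transitivity and Lemma~\ref{fact.center} until a coordinate contradiction or a forbidden ``center'' configuration appears. Controlling this case analysis, together with the degenerate coordinates $c=0$ and $c=\ell$ where $S$ collapses to a single pair, is the main obstacle; everything else is bookkeeping with the coordinate $p$.
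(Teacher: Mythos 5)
Your outline has the same skeleton as the paper's proof: normalize against a base pair $\{a_1,b_1\}$, pigeonhole on a distance-type parameter to extract $\sim Q/D$ pairs whose designated endpoints are ``aligned'' the same way, and then get $\binom{\lceil Q/D\rceil}{2}$ distinct lines by showing that no three of those endpoints are collinear. But the step you explicitly defer --- distinctness of the selected points and the non-collinearity of every triple --- is not a routine verification; it is the entire content of the paper's proof, and your proposed toolkit (inner transitivity plus Lemma~\ref{fact.center}) is almost certainly not enough to carry it out. The paper proves exactly these two statements (its claims that $b_i\neq b_j$ and that $[b_ia_ib_j]$, hence no collinear triple among the $b_k$) by building explicit walks through $a_1$ and applying Lemma~\ref{lem.med}, which is a genuinely graph-metric fact: every walk joining two vertices at distance at least $2$ contains an interior vertex outside $O(a,b)$. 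This is also where the hypothesis $\ov{a_ib_i}=\ov{a_jb_j}$ enters (to know, e.g., that $b_j$ and the interior vertices of the walks lie on the common line); note that your outline never uses the common-line hypothesis at all before the deferred step. Pure betweenness chasing cannot substitute for this, because the local geodesic orders on $4$-point subsets given by the $\alpha$-relations need not patch into a global linear order in a graph metric --- cycles are the standard obstruction --- which is precisely why the authors state this lemma for graph metrics rather than for pseudometric betweennesses.

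There is also a concrete false step earlier: in your ``window'' argument, the opposite-side case claims that $[ua_1b_1]$ and $[a_1b_1v]$ force $[ua_1v]$. Betweenness in graph metrics is not outer transitive. Take $C_{20}$ with vertices $v_0,\dots,v_{19}$, $a_1=v_0$, $b_1=v_4$, $u=v_{15}$, $v=v_9$: then $\rho(u,b_1)=9=\rho(u,a_1)+\rho(a_1,b_1)$ and $\rho(a_1,v)=9=\rho(a_1,b_1)+\rho(b_1,v)$, so $[ua_1b_1]$ and $[a_1b_1v]$ both hold, yet $\rho(u,v)=6$ while $|p(u)-p(v)|=14$. (In this configuration the two pairs involved are not $\alpha$-related to each other, so it is not a counterexample to the lemma, but it does invalidate your justification, which uses only each pair's relation to $\{a_1,b_1\}$.) This particular error is repairable at the cost of a constant --- coordinates lie in $[-D,D]$, so pigeonholing over $2D+1$ values still gives $\Omega((Q/D)^2)$, though not the stated constant $1/2$ --- but combined with the missing distinctness/non-collinearity argument, the proposal as written is an unproven reduction rather than a proof.
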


\begin{proof}
Define $A = \{ k : a_1 \notin I(a_k, b_k) \}$.

\begin{claim}\label{eq.claim001}
$|A| > Q - D$.
\end{claim}

\begin{proofclaim}
This is equivalent to proving that $|B| < D$ where $B = \{k: [a_ka_1b_k] \}$.
For any $k \in B$, since $[a_ka_1b_k]$ and  $\rho(a_1, b_1) = \rho(a_k, b_k)$, the point $b_1$ is distinct from $a_k$ and $b_k$. Moreover, since the pairs $\{a_1, b_1\}$ and $\{a_k, b_k\}$ are $\alpha$-related, we have either $[a_ka_1b_kb_1]$ or $[b_1a_ka_1b_k]$.
By symmetry between $a_k$ and $b_k$, we may swap the labels $a_k$ and $b_k$ if necessary so that $[a_ka_1b_kb_1]$ for every $k \in B$.
\medskip

Let $i, j$ be distinct elements of $B$ and let us prove that $\rho(b_1, b_i) \neq \rho(b_1, b_j)$.
For a contradiction, we assume that $\rho(b_1,b_i) = \rho(b_1,b_j)$.
Since  $[a_ia_1b_1]$ and $[a_1b_jb_1]$, we have $[a_ib_jb_1]$.
So
\begin{align*}
\rho(a_i,b_j)& = \rho(a_i,b_1)-\rho(b_1,b_j)\\
& = \rho(a_i,b_1)-\rho(b_1,b_i)\\
& = \rho(a_i,b_i)=\ell \text{ (because }[a_ib_ib_1]).
\end{align*}
If $b_i \neq b_j$, then $\{a_i,b_i,b_j\}$ is collinear and  we must have $\rho(b_i,b_j)=\rho(b_i,a_i)+\rho(a_i,b_j)=2\ell$ .
But observe that $[a_1b_ib_1]$ and $[a_1b_jb_1]$, which imply that $\rho(b_i,a_1) < \rho(a_1, b_1) = \ell$ and  $\rho(a_1,b_j) < \rho(a_1, b_1) = \ell$. Therefore, $\rho(b_i,b_j)\le \rho(b_i,a_1)+\rho(a_1,b_j)<2\ell$, a contradiction.

Hence we may assume that $b_i = b_j$, and thus $a_i \neq a_j$.
Since $\{a_i, a_j, b_j\}$ is collinear, we must have $\rho(a_i, a_j) = \rho(a_i, b_j) + \rho(a_j, b_j) = 2\ell$.
But observe that $\rho(a_i, a_1) < \ell$ and $\rho(a_j, a_1) < \ell$ because $[a_ia_1b_i]$ and $[a_j a_1 b_j]$ and $\rho(a_i, b_i) = \rho(a_j, b_j) = \ell$. Therefore, $\rho(a_i, a_j) \le \rho(a_i, a_1) + \rho(a_1, a_j) < 2\ell$, a contradiction.

Hence, for any $i, j \in B$ with $i \neq j$, we have $\rho(b_1, b_i) \neq \rho(b_1, b_j)$.
Moreover, for each $k \in B$, since $[a_1b_kb_1]$ and $\rho(a_1, b_1) = \ell$, we have $\rho(b_1, b_k) \in \{1, \dots, \ell-1 \}$ (all distances are integral because $(V, \rho)$ is a graph metric). Hence $|B| \leq \ell - 1 < D$.
\end{proofclaim}

For any $k \in A$, may we  swap $a_k$ and $b_k$ if necessary so that either $[a_1a_kb_k]$ or $a_1 = a_k$ holds.
The distance $\rho(a_1, b_k)$ is a positive integer no more than $D$, so there is an integer $d \leq D$ such that the set
$A_d= \{ k \in A : \rho(a_1, b_k) = d \}$ has size $|A_d| \geq (Q-D) / D$.
The rest of the proof consists of showing that each $b_k$ for $k \in A_d$ is distinct and that each pair $\{b_i,b_j\}$ for $i, j \in A_d$ defines a distinct line.

The next claim is used as a tool.

\begin{claim}\label{tool}
Let $i, j$ be distinct elements of $A_d$.
For any vertex $x$, $[a_1xa_i]$ and $[a_jxb_j]$ cannot both hold.
\end{claim}

\begin{proofclaim}
Assume for contradiction that there is a vertex $x$ such that $[a_1xa_i]$ and $[a_jxb_j]$.
Since $[a_1a_ib_i]$, we have $[a_1xa_ib_i]$ and thus $\rho(a_1,x)<\rho(a_1,b_i)-\rho(a_i,b_i)=d-\ell$.
Moreover, since $[a_1a_jb_j]$, we have $[a_1a_jxb_j]$, and thus $\rho(a_1,x)>\rho(a_1,b_j)-\rho(a_j,b_j)=d-\ell$, a contradiction.
\end{proofclaim}

\begin{claim}\label{eq.claim002}
If $i, j$ are distinct elements of $A_d$, then $b_i \neq b_j$.
\end{claim}

\begin{proofclaim}
Let $i,j$ be distinct elements of $A_d$ and suppose for a contradiction that $b_i = b_j$.
Then $a_i\neq a_j$.
Since $\{a_i,b_i,a_j\}$ is collinear and $\rho(a_i,b_i)=\rho(a_j,b_i)=\ell$, we must have $[a_ib_ia_j]$.

Let $P_1$ be a shortest path from $b_i$ to $a_j$, let $P_2$ be a shortest path from $a_j$ to $a_1$ and finally let $P_3$ be a shortest path from $a_1$ to $a_i$.
So $W = b_i P_1 a_j P_2 a_1 P_3 a_i$ is a walk from $b_i$ to $a_i$.
Since $\rho(a_i,b_i) =\ell \ge 2$, by Lemma~\ref{lem.med}, there exists a vertex $u \in W \setminus \{a_i,b_i\}$ such that $u \notin O(a_i,b_i)$. We shall prove that such a $u$ does not exist, giving a contradiction.

It follows from $[a_ib_ia_j]$ and $[a_1a_ib_i]$ that $u \notin \{a_1,a_j\}$.
For any $x \in P_1 \sm \{b_i,a_j\}$, we have $[b_ixa_j]$ and since $[a_ib_ia_j]$, we have $[a_ib_ix]$, so $u \notin P_1 \sm \{b_i,a_j\}$.
For any $y \in P_2 \sm \{a_1,a_j\}$, we have $[a_1ya_j]$ and thus,  by Claim~\ref{tool}, $[a_iyb_i]$ does not hold, so  $u \notin P_2 \sm \{a_1,a_j\}$.
Finally, for any $z \in P_3 \sm \{a_1,a_i\}$, we have $[a_1za_i]$ and since $[a_1a_ib_i]$, we have $[za_ib_i]$ and thus $u \notin P_3 \sm \{a_1,a_i\}$.
\end{proofclaim}

\begin{claim}\label{eq.claim003}
If $i, j$ are distinct elements of $A_d$, then $I(a_i, b_i) \cap I(a_j, b_j) = \emptyset$, and $[a_ia_jb_i]$ does not hold.
\end{claim}

\begin{proofclaim}
Let $i \in A_d$ and let $u$ be a vertex such that $[a_iub_i]$.
Let $j  \in A_d \sm \{i\}$ and assume for contradiction that $u=a_j$ or $[a_jub_j]$.
We are going to show that, under these  conditions, $b_j \notin \ov{a_ib_i}$, a contradiction.

Since $[a_1a_ib_i]$,  we have $[a_1a_iub_i]$.
If $u=a_j$ we have $[a_1a_ia_jb_i]$ and since $[a_1a_jb_j]$, we have $[a_1a_ia_jb_j]$.
If $[a_jub_j]$, since  $[a_1a_jb_j]$, we have $[a_1a_jub_j]$ and thus $[a_1a_iub_j]$.
So, in both cases, we have $\rho(a_i,b_j) = \rho(a_1,b_j)-\rho(a_1,a_i)=d-(d-\ell)=\ell$ (indeed, since $[a_1a_ib_i]$, we have $\rho(a_1,a_i)=\rho(a_1,b_i)-\rho(a_i,b_i)=d-\ell$).

By Claim~\ref{eq.claim002}, $b_i \neq b_j$ and thus  $\{a_i,b_i,b_j\}$ is collinear.
Since $\rho(b_i,a_i)=\rho(a_i,b_j)=\ell$, we must have $\rho(b_i,b_j)=2\ell$.
But,  since $[a_iub_i]$, we have $\rho(u,b_i)<\rho(a_i,b_i)=\ell$ and similarly $\rho(u,b_j)<\ell$.
Hence, $\rho(b_i,b_j) \le \rho(b_i,u)+\rho(u,b_j) < 2 \ell$, a contradiction.
\end{proofclaim}

\begin{claim}\label{eq.claim004}
If $i, j$ are distinct elements of $A_d$, then $[b_ia_ib_j]$
\end{claim}

\begin{proofclaim}
Let $i, j$ be distinct elements of $A_d$.
Since $b_j \in \ov{a_ib_i}$, the triple $\{a_i,b_i,b_j\}$ is collinear.
If $[a_ib_jb_i]$, then since $[a_1a_ib_i]$, we have $[a_1a_ib_jb_i]$, which contradicts the fact that $\rho(a_1, b_i) = \rho(a_1, b_j)$.
So we may assume that $[a_ib_ib_j]$.

Let $P_1$ be a shortest path from $b_i$ to $b_j$, let $P_2$ be a shortest path from $b_j$ to $a_j$, let $P_3$ be a shortest path from $a_j$ to $a_1$, and finally let $P_4$ be a shortest path from $a_1$ to $a_i$.
So $W = b_i P_1 b_j P_2 a_j P_3 a_1 P_4 a_i$ is a walk from $b_i$ to $a_i$.
Since $\rho(a_i,b_i) =\ell \ge 2$, by Lemma~\ref{lem.med}, there exists a vertex $u \in W \setminus \{a_i,b_i\}$ such that $u \notin O(a_i,b_i)$. We shall prove that such a $u$ does not exist, giving a contradiction.

For any $x \in P_1 \sm \{b_i\}$, we have $[b_ixb_j]$, and since $[a_ib_ib_j]$, we have $[a_ib_ix]$, so $u \notin P_1 \sm \{b_i\}$.
For any $y \in P_2 \sm \{b_i\}$, $[b_jxa_j]$ or $x=a_j$. Hence $x \in \ov{a_jb_j}=\ov{a_ib_i}$ and by Claim~\ref{eq.claim003}, $x \notin I(a_i,b_i)$. So $x \in O(a_i,b_i)$ and $u \notin  P_2 \sm \{b_i\}$.

For any $z \in P_3 \sm \{a_1,a_j\}$, we have $[a_1za_j]$ and since $[a_1a_jb_j]$, we have $[za_jb_j]$ and thus $z \in \ov{a_jb_j}=\ov{a_ib_i}$. By Claim~\ref{tool}, $[a_jzb_j]$ does not hold and thus $z \in O(a_j,b_j)$.
Hence $u \notin  P_3 \sm \{a_1,a_j\}$.
Finally, for any $w \in P_4 \sm \{a_i\}$, we have $[a_1wa_i]$ and since $[a_1a_ib_i]$, we have $[wa_ib_i]$, so $u \notin P_4 \sm \{a_i\}$.
\end{proofclaim}

\begin{claim}\label{last}
For any distinct $i,j,k \in A_d$, $\{b_i,b_j,b_k\}$ is not collinear.
\end{claim}

\begin{proofclaim}
Let $i,j,k$ be three distinct elements of $A_d$.
By symmetry, it suffices to prove that $[b_ib_jb_k]$ does not hold.
By Claim~\ref{eq.claim004}, we have $[b_ja_ib_i]$ and $[b_ka_ib_i]$.
Hence
\[\rho(b_j,b_k) \leq \rho(b_j,a_i)+\rho(a_i,b_k) < \rho(b_j,b_i)+\rho(b_i,b_k).\]
Thus $[b_ib_jb_k]$ does not hold.
\end{proofclaim}

Set $C = \{b_k : k \in A_d \}$.
By Claim~\ref{last}, for any distinct elements $i, j$ of $A_d$, we have $\ov{b_ib_j} \cap C = \{b_i, b_j\}$.
Hence, the $\binom{|A_d|}{2}$ lines $\ov{b_ib_j}$ for $i, j \in A_d$ are all pairwise distinct.
\end{proof}

\begin{lem}\label{lem.graph_gamma_quad} Let $G$ be a graph and $(V, \rho)$ the corresponding graph metric.
Let $Q$ be a positive integer. Let $\{a_1, b_1\}, \ldots, \{a_Q, b_Q\}$ be $Q$ pairs of vertices such that, for any $1 \le i \neq j \le Q$, the following hold:
\begin{itemize}
\item $\ov{a_ib_i}=\ov{a_jb_j}$ and
\item $\{a_i,b_i\}$ and $\{a_j,b_j\}$ are $\gamma$-related.
\end{itemize}
Then $(V, \rho)$ has at least  $(1/2 - o(1))Q^2$ lines.
\end{lem}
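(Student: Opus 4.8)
We are given $Q$ pairs of points that are pairwise $\gamma$-related and all generate the same line $L$. By Lemma~\ref{lem.gamma_clique}(b), the pairs are pairwise disjoint, so the $2Q$ points $a_1, b_1, \dots, a_Q, b_Q$ are all distinct. Recall that $\gamma$-related means the pairs are antipodal (so $(a_i, a_j, b_i, b_j)$ is a parallelogram for each $i \ne j$, after possibly swapping labels within a pair) and that $O(a_i, b_i) = \emptyset$ for every $i$. The goal is to extract $(1/2 - o(1))Q^2$ distinct lines, which is roughly $\binom{2Q}{2}$ up to the constant --- this strongly suggests we should aim to show that the lines generated by most \emph{pairs of the $2Q$ points} are distinct.

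**The approach.**

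The plan is to mimic the final step of the $\alpha$-related argument (Lemma~\ref{lem.graph_alpha_quad}): we want to find a large set $C$ of the $2Q$ points, of size $\Omega(Q)$, no three of which are collinear, so that the $\binom{|C|}{2}$ lines they define are pairwise distinct. First I would fix a reference point, say $a_1$, and classify each of the remaining points by its position relative to $a_1$ inside $L$. Since each point lies on $L = \ov{a_1 b_1}$ and $O(a_1, b_1) = \emptyset$, every point $c \ne a_1, b_1$ satisfies $[a_1 c b_1]$, i.e.\ lies in the interior $I(a_1, b_1)$. Thus I would try to linearly order the points by their distance (or betweenness position) from $a_1$; Lemma~\ref{lem.gamma_no_mid}, which is exactly tailored to three pairwise $\gamma$-related pairs, should prevent ``middle'' coincidences and force the betweenness structure among triples to be highly constrained.

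**The key structural step.**

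Concretely, for three distinct pairs I would invoke Lemma~\ref{lem.gamma_no_mid}: given three pairwise $\gamma$-related pairs $\{a,b\}$, $\{u,v\}$, $\{x,y\}$, if $[axu]$ holds then $\{a,y,u\}$ is not collinear. Combined with the parallelogram relations among the pairs, this should let me prove a ``no three collinear'' statement for a suitably chosen transversal --- for instance, the set $C = \{a_1, a_2, \dots, a_Q\}$ of one representative from each pair (after consistently orienting the labels using a fixed reference, analogously to how labels $a_k, b_k$ are swapped in the $\alpha$-proof). I expect the argument to parallel Claim~\ref{last}: using the antipodal (parallelogram) structure one derives a strict triangle-inequality contradiction of the form $\rho(b_j, b_k) \le \rho(b_j, a_i) + \rho(a_i, b_k) < \rho(b_j, b_i) + \rho(b_i, b_k)$, ruling out $[b_i b_j b_k]$. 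Once $C$ has no collinear triple, every pair in $C$ generates a line meeting $C$ in exactly two points, so these $\binom{|C|}{2}$ lines are distinct.

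**The main obstacle.**

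The hard part will be the bookkeeping of orientations: the definition of $\gamma$-related only guarantees that \emph{some} labeling makes $(a_i, a_j, b_i, b_j)$ a parallelogram, and different pairs $j$ might demand inconsistent choices of which element of $\{a_i, b_i\}$ plays the ``$a$'' role. I would resolve this exactly as in Lemma~\ref{lem.graph_alpha_quad}, by fixing $a_1$ as an anchor and relabeling each pair $\{a_k, b_k\}$ so that, say, $a_k$ is the endpoint ``closer'' to $a_1$ in the betweenness order; I then need to check this choice is globally consistent, which is where Lemma~\ref{lem.gamma_no_mid} and part~(c) of Lemma~\ref{lem.gamma_clique} (that the pairs really are $\gamma$-related, hence antipodal) do the work. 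A secondary subtlety is that this lemma, unlike Lemma~\ref{lem.graph_alpha_quad}, carries no diameter loss and no length hypothesis, so I must make sure the entire argument goes through using only betweenness and the triangle inequality, without any appeal to integrality or bounded distances --- which is consistent with the clean $(1/2 - o(1))Q^2$ bound claimed.
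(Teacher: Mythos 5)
Your proposal hinges on one key structural claim: that, after a suitable relabeling, a transversal $C$ of the $Q$ pairs (one representative per pair, e.g.\ $\{a_1,\dots,a_Q\}$) contains no collinear triple, so that the $\binom{|C|}{2}$ lines it generates are automatically distinct. This claim is false, and it cannot be repaired by any choice of representatives. Take $G = C_{2m}$, the cycle on vertices $0,1,\dots,2m-1$, and the $Q=m$ antipodal pairs $\{i,\,i+m\}$ for $i=0,\dots,m-1$. Each pair has $O(i,\,i+m)=\emptyset$ (a point $c$ with $[c\,i\,(i+m)]$ would be at distance $\rho(c,i)+m>m$ from $i+m$, impossible since the diameter is $m$), each generates the universal line $V$, and a direct check shows $(i,j,i+m,j+m)$ is a parallelogram for $i\neq j$; so all hypotheses of the lemma hold and these pairs are pairwise $\gamma$-related. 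Yet the natural transversal $\{0,1,\dots,m-1\}$ is a geodesic, as collinear as possible, and no relabeling helps: among \emph{any} five vertices of a cycle, some three lie on a common arc of length at most $m$ (a counting argument over clockwise arcs of length $m$), and three such vertices are collinear. Hence among the $2Q$ points there is no collinear-free subset of size larger than four, so the quantity you aim for can never reach $\Omega(Q^2)$ by this route; the failure is not in your reduction (no three collinear does imply distinct lines) but in the existence of $C$. Relatedly, the strict-inequality step you hope to copy from the end of the proof of Lemma~\ref{lem.graph_alpha_quad} needs the anchor configuration $[b_j a_i b_i]$, i.e.\ points of other pairs lying \emph{outside} beyond $a_i$; in the $\gamma$-setting the parallelogram relations give $[a_i a_j b_i]$ and $[a_i b_j b_i]$, so all other points lie strictly \emph{inside} $I(a_i,b_i)$ and that configuration never occurs.

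The paper's proof takes a genuinely different route: it never looks for a collinear-free set of points, but instead exhibits $\binom{Q}{2}$ \emph{pairs} that generate pairwise distinct lines. Writing $t=\rho(a_i,b_i)$ (the same for all $i$), the case $t=2$ is exactly where your idea works: then $\rho(a_i,a_j)=1$ for all $i\neq j$, and distance-$1$ pairs admit no betweenness, so $\{a_1,\dots,a_Q\}$ has no collinear triple. When $t>2$, for each $i<j$ one selects a \emph{special pair} $\{a_i,x_{ij}\}$ with $x_{ij}\in\{a_j,b_j\}$ chosen so that $\rho(a_i,x_{ij})\geq t/2$ (possible since $[a_j a_i b_j]$). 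Such a pair has $I(a_i,x_{ij})\neq\emptyset$ (graph metric, distance $>1$) and $O(a_i,x_{ij})\neq\emptyset$ (it contains $b_i$, since $[a_i x_{ij} b_i]$), so by Lemma~\ref{lem.betagamma} two special pairs generating the same line would have to be $\alpha$-related, and this is ruled out using Lemma~\ref{lem.gamma_no_mid}. Note also that, contrary to your closing remark, integrality of the graph metric is genuinely used twice (the case split at $t=2$, and the nonemptiness of $I(a_i,x_{ij})$); the lemma is stated for graph metrics, not arbitrary pseudometric betweennesses. Your cycle example illustrates the paper's mechanism well: the special pairs are long arcs whose lines are pairwise distinct, even though every large set of points contains collinear triples.
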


\begin{proof}
The $Q$ pairs $\{a_1, b_1\}, \ldots, \{a_Q, b_Q\}$ are pairwise $\gamma$-related, so the distances $\rho(a_i, b_i)$ are all the same value, say $t$. For any $1 \le i \neq j \le Q$ we have $[a_ia_jb_i]$, $[a_jb_ib_j]$, $[b_ib_ja_i]$ and $[b_ja_ia_j]$.
Moreover, by Lemma~\ref{lem.gamma_clique}, all the $2Q$ points $a_1, \ldots, a_Q, b_1, \ldots, b_Q$ are distinct.
Set $X=\{a_i,b_i:1 \le i \le Q\}$.
\medskip

\noindent{\em Case 1:} $t = 2$.
\\
For any $1 \le i \neq j \le Q$, we have $[a_ja_ib_j]$, so $\rho(a_i,a_j)=1$.
 This implies that no three vertices among $\{a_1, \ldots, a_Q\}$ are collinear
and thus the $\binom{Q}{2}$ lines $\ov{a_ia_j}$ for $1 \leq i < j \leq Q$ are all pairwise distinct.
\medskip

\noindent{\em Case 2:} $t > 2$.
\\
For any $1 \le i < j<$, we set $x_{ij} = a_j$ if $\rho(a_j,a_i) \geq t/2$, otherwise $\rho(b_j,a_i) \geq t/2$ (because $[a_ja_ib_j]$) and we set $x_{ij} = b_j$.
So, for any $1 \le i <j \le Q$, we have $\rho(a_i,x_{ij}) \ge t/2$.

We call $\{a_i, x_{ij}\}$ a \emph{special pair}  and claim that the lines $\ov{a_ix_{ij}}$ are all distinct.
Observe that, since $\rho(a_i,x_{ij}) \ge t/2$, $I(a_i,x_{ij})\neq \emptyset$ and since $[a_ix_{ij}b_i]$, $O(a_ix_{ij})\neq \emptyset$.
Hence, if two special pairs define the same line, they must be $\alpha$-related.
Now,  let $\{a,b\}$ and $\{c,d\}$ be two special pairs and let us prove that  $\ov{ab}\neq \ov{cd}$.
For any $1 \le i \le Q$, we say that $a_i$ and $b_i$ are \emph{friends} and we let  $a'$, $b'$, $c'$ and $d'$ be respectively the friends of $a$, $b$, $c$ and $d$.
We now distinguish between the two following cases.
\medskip

\noindent{\em Case 2.1:} $\{a, b\} \cap \{c, d\} \neq \emptyset$.
\\
Assume without loss of generality that $a = d$ (and thus $a$, $b$ and $c$ are pairwise distinct).
Since $\{a,b\}$ and $\{a,c\}$ are special pairs,  $\rho(a, b)$ and $\rho(a, c)$ are both at least $t/2$, and since  $[bcb']$, $\rho(b, c) < t$.
Hence $[bac]$ does not hold.
So we may assume with out loss of generality that $[abc]$.
Then, by Lemma \ref{lem.gamma_no_mid}, $\{a,b',c\}$ is not collinear and thus $b' \notin \ov{ac}$.
But since $[bab']$, $b' \in \ov{ab}$ and thus $\ov{ab} \neq \ov{ac}$.
\medskip

\noindent{\em Case 2.2:} $a$, $b$, $c$ and $d$ are all pairwise distinct.
\\
Assume first that $\{a, b\} \cap I(c, d) = \{c, d\} \cap I(a, b) = \emptyset$.
Then, since $a,b,c,d$ are geodesic (because $\{a,b\}$ and $\{c,d\}$ are $\alpha$-related, we may assume with out loss of generality that $[abcd]$.
Hence $\rho(a,d) >\rho(a,b) + \rho(c,d)= t$, a contradiction (indeed, the maximum distance between two points of $X$ is $t$).

So we may assume with out loss of generality that $[cad]$.
Then $a \not\in \{c', d'\}$ because $[cdc']$ and $[dcd']$; then by Lemma \ref{lem.gamma_no_mid}, $a' \not\in \ov{cd} = L$.
\end{proof}

\begin{thm}\label{thm.d_graph} Any metric space induced by a graph on $n$ vertices with diameter no more than $D$ has at least
$(2^{-7/3} - o(1))(n / D)^{4/3}$ lines when $n / D \rightarrow \infty$.
\end{thm}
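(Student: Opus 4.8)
The plan is to combine the two main lemmas of this section (Lemma~\ref{lem.graph_alpha_quad} and Lemma~\ref{lem.graph_gamma_quad}) with a double-counting argument that produces a line generated by many pairs at a common distance, and then show that among those pairs a large subset is pairwise $\alpha$-related or pairwise $\gamma$-related. Following the sketch given just after the statement, let $(V,\rho)$ be the graph metric on $n$ vertices with diameter at most $D$; I may assume it has no universal line, since otherwise the conclusion is vacuous. Suppose for contradiction that the number of lines is less than $(2^{-7/3}-o(1))(n/D)^{4/3}$. Since the edge sets $E(L)$ partition $\binom{V}{2}$, there must be a line $L$ whose generator graph $H(L)$ has at least $\binom{n}{2}/(\text{number of lines})$ edges, which under the contradiction hypothesis is $\Omega(n^{2/3}D^{4/3})$ pairs. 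Because the graph has diameter $D$, the distances take only values in $\{1,\dots,D\}$, so by pigeonhole there is a single distance $d$ with $|E_d(L)| = \Omega(n^{2/3}D^{1/3})$; call these pairs $\{a_1,b_1\},\dots,\{a_m,b_m\}$ with $m=\Omega(n^{2/3}D^{1/3})$.

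Next I would **split these pairs according to the trichotomy of Lemma~\ref{lem.betagamma}**: any two distinct pairs generating $L$ are $\alpha$-related, $\beta$-related or $\gamma$-related. The $\beta$-related case should be ruled out or absorbed, since all our pairs have the same distance $d=\rho(a_i,b_i)$; in fact I expect that for equidistant pairs the relevant dichotomy reduces to $\alpha$ versus $\gamma$, because a $\beta$-relation forces the four points to form a parallelogram with empty inner intervals, which is incompatible with the geodesic structure unless it also yields one of the other relations. Concretely, define an auxiliary graph on the index set $\{1,\dots,m\}$ whose edges are the $\gamma$-related pairs. By Lemma~\ref{lem.gamma_clique}(c), any two $\gamma$-pairs generating $L$ are in fact $\gamma$-related, so the $\gamma$-related pairs among our collection form a clique; dually, I expect the $\alpha$-related pairs to cluster as well. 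The cleanest route is to observe that the $\gamma$-pairs sharing the line $L$ are automatically pairwise $\gamma$-related, and the remaining (non-$\gamma$) pairs are pairwise $\alpha$-related, so the collection splits into one pairwise-$\alpha$-related part and one pairwise-$\gamma$-related part. One of the two parts has size at least $m/2 = \Omega(n^{2/3}D^{1/3})$.

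Now I would **apply the appropriate quadratic lemma to the larger part**, with $Q=\Omega(n^{2/3}D^{1/3})$ and $\ell=d\le D$. If the large part is pairwise $\alpha$-related, Lemma~\ref{lem.graph_alpha_quad} gives at least $(1/2-o(1))(Q/D)^2$ lines, and substituting $Q=\Omega(n^{2/3}D^{1/3})$ yields
\[
\left(\tfrac12-o(1)\right)\left(\frac{\Omega(n^{2/3}D^{1/3})}{D}\right)^2 = \Omega\!\left(\frac{n^{4/3}}{D^{4/3}}\right),
\]
which contradicts our assumption once the constants are tracked to $2^{-7/3}$. If instead the large part is pairwise $\gamma$-related, Lemma~\ref{lem.graph_gamma_quad} gives $(1/2-o(1))Q^2=\Omega(n^{4/3}D^{2/3})\ge \Omega((n/D)^{4/3})$ lines, an even stronger bound. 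In either case we exceed the assumed number of lines, giving the desired contradiction and hence the stated lower bound.

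**The main obstacle** I anticipate is bookkeeping the constants so that they collapse exactly to $2^{-7/3}$ rather than some weaker constant, and, more substantively, justifying cleanly that the equidistant pairs generating $L$ partition into a pairwise-$\alpha$-related class and a pairwise-$\gamma$-related class with no genuinely $\beta$-related obstruction left over. This hinges on showing that among same-distance pairs the $\beta$-relation either cannot occur or can be re-interpreted so that Lemma~\ref{lem.gamma_clique} applies; I would need to verify that a $\beta$-pair with equal distances does not interfere with the clique structure of the $\gamma$-pairs, so that the two quadratic lemmas can each be applied to a clean, monochromatic subfamily. Getting the threshold $Q=\Omega(n^{2/3}D^{1/3})$ right from the double-counting step, and ensuring $Q/D\to\infty$ so the $o(1)$ terms in the lemmas are legitimately negligible, is the other place where care is required.
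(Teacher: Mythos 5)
Your overall strategy matches the paper's: double-count to find a line $L$ and a distance $d$ such that $|E_d(L)| = \Omega(n^{2/3}D^{1/3})$, split those pairs via the trichotomy of Lemma~\ref{lem.betagamma} into a pairwise $\gamma$-related class (via Lemma~\ref{lem.gamma_clique}) and a pairwise $\alpha$-related class, and finish with Lemma~\ref{lem.graph_gamma_quad} and Lemma~\ref{lem.graph_alpha_quad}. But there is a genuine gap exactly at the point you flag as an obstacle: the case $d=1$. Your hope that ``for equidistant pairs the dichotomy reduces to $\alpha$ versus $\gamma$'' is true only when $d>1$: in a graph metric, $I(x,y)=\emptyset$ if and only if $\rho(x,y)=1$, so a $\beta$-pair is precisely a pair at distance $1$, and distance-$1$ pairs generating the same line genuinely can be $\beta$-related (opposite edges of a $4$-cycle, for instance, are $\beta$-related and generate the same line). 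Moreover, for $d=1$ neither quadratic lemma is available: Lemma~\ref{lem.graph_alpha_quad} is stated only for $\ell \geq 2$, and distance-$1$ pairs can never be $\gamma$-related, since antipodality forces $I(a,b)\neq\emptyset$. The paper handles $d=1$ by an entirely different argument that your proposal is missing: it pigeonholes on the distance over \emph{all} $\binom{n}{2}$ pairs first (not within one line), so that $d=1$ yields $\Omega(n^2/D)$ edges of $G$, hence a vertex $v$ of degree $\Omega(n/D)$; then $N(v)$ induces a metric subspace with distances in $\{0,1,2\}$, and Theorem~\ref{thm.2metric} supplies $(2^{-7/3}-o(1))(n/D)^{4/3}$ lines there. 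Note that your order of pigeonholing (line first, then distance) makes this repair impossible: having only $\Omega(n^{2/3}D^{1/3})$ distance-$1$ pairs generating one fixed line need not produce any vertex of large degree, so you cannot fall back on the neighbourhood argument. For $d>1$ your outline is sound and, with the paper's threshold $Q = 2^{4/3}n^{2/3}D^{1/3}$, does deliver the constant $2^{-7/3}$; it is in fact the $d=1$ case, through the constant $\alpha = 2^{-7/3}$ of Theorem~\ref{thm.2metric}, that dictates this value.

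A second, smaller error: you assume there is no universal line ``since otherwise the conclusion is vacuous.'' The theorem has no universal-line exception, and graph metrics of bounded diameter can have universal lines (e.g.\ the metric of $K_{2,n-2}$, where the line through the two vertices of the small side is universal), so the conclusion is not vacuous in that case. Your argument never actually uses this assumption, so nothing breaks, but the remark reflects a misreading of the statement and should be deleted; the paper's proof, like the correct version of yours, is unconditional.
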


\begin{proof} Suppose the graph is $G = (V, E)$ with $|V| = n$. There exists $1 \leq d \leq D$ such that $(1/2D - o(1)) n^2$
pairs of vertices have the same distance $d$. Let $c = 2^{-7/3}$.

{\em Case 1.} $d=1$. There is a vertex $v$ with degree $\deg(v) \geq (1 - o(1)) n / D$. The neighbourhood $N(v)$ induces a metric subspace in which all distances are contained in $\{0, 1, 2\}$. By Theorem \ref{thm.2metric}, the pairs of vertices contained in $N(v)$ generate at least $(c- o(1))(n / D)^{4/3}$ distinct lines.

{\em Case 2.} $d>1$.  If, for every line $L$, the generator graph $H_d(L)$ has less than $Q = 2^{4/3} n^{2/3} D^{1/3}$ edges, then there are at least
$(1/2D - o(1)) n^2 / Q \geq (c-o(1)) (n/D)^{4/3}$ lines. Otherwise there is a line $L$ where $H_d(L)$ has at least $Q$ edges.

{\em Case 2.1.} At least $Q/2$ edges of $H_d(L)$ are $\gamma$-pairs. By Lemma \ref{lem.gamma_clique}, any two of these pairs are $\gamma$-related, and by Lemma \ref{lem.graph_gamma_quad}, there are at least $(1/2 -  o(1))(Q/2)^2 > (c - o(1))(n/D)^{4/3}$ lines.

{\em Case 2.2.} At least $Q/2$ edges of $H_d(L)$ are not $\gamma$-pairs. Since $d > 1$ and the metric space is a graph metric, they are also not $\beta$-pairs ($\{x, y\}$ is in a $\beta$-relation implies $I(x, y) = \emptyset$, which, in a graph metric, means $\rho(x, y) = 1$).
So any two of them are $\alpha$-related. By Lemma \ref{lem.graph_alpha_quad}, there are $(1/2 - o(1))(Q/2D)^2 \geq (c-o(1))(n/D)^{4/3}$ lines.
\end{proof}

Note that we can slightly improve the constant in the above lemma, but that is not our main goal.
Theorem~\ref{thm.d_graph} immediately implies:

\begin{thm}\label{thm.d_graphs} For any natural number $D$, any metric induced by an $n$-vertex graph with diameter $D$ has $\Omega(n^{4/3})$ lines.
\end{thm}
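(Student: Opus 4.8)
The plan is to derive this statement directly from Theorem~\ref{thm.d_graph} by treating $D$ as a fixed constant rather than a quantity growing with $n$. The only work is a careful reading of the asymptotics, so I expect the proof to be essentially immediate; there is no genuine combinatorial obstacle here, only bookkeeping of the $o(1)$ term.

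First I would observe that a graph of diameter exactly $D$ is in particular a graph of diameter at most $D$, so Theorem~\ref{thm.d_graph} applies verbatim to any such graph. Next I would note that, with $D$ now a fixed natural number, letting the number of vertices $n$ tend to infinity is exactly the same as letting $n/D \to \infty$, since $D$ is constant. Thus the asymptotic regime in the hypothesis of Theorem~\ref{thm.d_graph} coincides with the plain limit $n \to \infty$, and we may quote its conclusion freely.

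Substituting into the bound of Theorem~\ref{thm.d_graph}, the number of lines in such a graph is at least
\[
\left(2^{-7/3} - o(1)\right)\left(\frac{n}{D}\right)^{4/3} = \left(2^{-7/3} - o(1)\right) D^{-4/3}\, n^{4/3}.
\]
Because $D$ is fixed, the factor $D^{-4/3}$ is a positive constant, and for all sufficiently large $n$ the term $2^{-7/3} - o(1)$ is bounded below by a positive constant (say $2^{-8/3}$). Hence for all large enough $n$ the number of lines is at least $2^{-8/3} D^{-4/3}\, n^{4/3}$, which is $\Omega(n^{4/3})$ with an implied constant depending only on $D$.

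The only point requiring any care — and the nearest thing to an ``obstacle'' — is to confirm that the $o(1)$ appearing in Theorem~\ref{thm.d_graph}, which is defined to vanish as $n/D \to \infty$, also vanishes as $n \to \infty$ for our fixed $D$; this is immediate from the identification of the two limiting regimes above. Finitely many small values of $n$ can be absorbed into the $\Omega$-constant, completing the proof.
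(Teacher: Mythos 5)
Your proof is correct and matches the paper exactly: the paper simply states that Theorem~\ref{thm.d_graph} ``immediately implies'' this result, and your bookkeeping of the limit $n/D \to \infty$ versus $n \to \infty$ for fixed $D$, together with absorbing the constant $D^{-4/3}$ into the $\Omega$-constant, is precisely the intended (and only necessary) argument.
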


It also implies the following, which improves the $\Omega(\sqrt{n})$ lower bound in Theorem \ref{thm.main} for the graph metrics.

\begin{thm}\label{thm.graphs} Any graph metric on $n$ points either has at least $(1/2 - o(1))n^{4/7}$ lines or has a universal line.
\end{thm}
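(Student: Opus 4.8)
The plan is to combine two lower bounds on the number of lines, each strong in a different range of the diameter, and then to balance them. Throughout I assume there is no universal line, since otherwise the conclusion holds trivially. Write $D$ for the diameter of the graph metric $(V,\rho)$.

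First I would exploit a \emph{large} diameter. Pick vertices $a,b$ with $\rho(a,b)=D$ and a shortest path $(v_0=a,v_1,\dots,v_D=b)$ joining them. Since every subpath of a shortest path is again shortest, $\rho(v_i,v_j)=j-i$ for all $i<j$, so $[v_iv_kv_j]$ holds whenever $i<k<j$; that is, $(v_0,\dots,v_D)$ is a geodesic sequence of length $D+1$. By Lemma~\ref{lem.long_path} the metric space therefore has at least $D+1>D$ distinct lines. Next I would exploit a \emph{small} diameter via Theorem~\ref{thm.d_graph}, which (whenever $n/D\to\infty$) guarantees at least $(2^{-7/3}-o(1))(n/D)^{4/3}$ lines. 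Hence the number of lines is at least $\max\{\,D,\ (2^{-7/3}-o(1))(n/D)^{4/3}\,\}$.

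The first term increases and the second decreases with $D$, so the worst case is where they cross: solving $D\approx (n/D)^{4/3}$ gives $D\approx n^{4/7}$, which is exactly the target exponent. Concretely I would split at the threshold $D_0=\tfrac12 n^{4/7}$. If $D\ge D_0$, the geodesic bound already produces at least $\tfrac12 n^{4/7}$ lines. If $D<D_0$, then $n/D>2n^{3/7}\to\infty$, so Theorem~\ref{thm.d_graph} applies, and since $(n/D)^{4/3}$ decreases in $D$,
\[
(2^{-7/3}-o(1))(n/D)^{4/3}\ \ge\ (2^{-7/3}-o(1))(n/D_0)^{4/3}\ =\ (2^{-7/3}-o(1))\,2^{4/3}\,n^{4/7}\ =\ (\tfrac12-o(1))\,n^{4/7},
\]
using $2^{-7/3}\cdot 2^{4/3}=2^{-1}$ and $(n/D_0)^{4/3}=(2n^{3/7})^{4/3}=2^{4/3}n^{4/7}$. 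Either way we obtain $(\tfrac12-o(1))n^{4/7}$ lines, which matches the constant in the statement.

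This is a short corollary-style argument, so I do not expect a genuine obstacle; the only points demanding care are the asymptotics. I must verify that the hypothesis $n/D\to\infty$ of Theorem~\ref{thm.d_graph} actually holds in the regime where I invoke it — it does, because there $n/D\ge 2n^{3/7}\to\infty$ — and that the $o(1)$ of that theorem, which is stated as $n/D\to\infty$, is genuinely $o(1)$ as $n\to\infty$ under this bound. Both the exponent $4/7$ and the constant $\tfrac12$ fall out of the balance $D\approx (n/D)^{4/3}$ at $D=\tfrac12 n^{4/7}$, and no finer structural analysis beyond Lemma~\ref{lem.long_path} and Theorem~\ref{thm.d_graph} is needed.
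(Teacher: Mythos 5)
Your proposal is correct and is essentially identical to the paper's own proof: the paper also splits at the diameter threshold $0.5\,n^{4/7}$, using Lemma~\ref{lem.long_path} (via a shortest path viewed as a geodesic sequence) when the diameter is large and Theorem~\ref{thm.d_graph} when it is small. The only difference is that you spell out the balancing arithmetic and the verification that $n/D \to \infty$, which the paper leaves implicit.
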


\begin{proof} If the diameter of the graph is at least $0.5 n^{4/7}$, then we are done by Lemma \ref{lem.long_path}. Otherwise, we apply Theorem~\ref{thm.d_graph}.
\end{proof}

Theorem~\ref{thm.d_graphs} and some other results that we detail now suggest the conjecture that all graph metrics either have universal line or define at least $\Omega(n^{4/3})$ lines.

As we said in the introduction, it was proved in Chv\'{a}tal \cite{Chvatal} that any 2-metric space with $n$ points has either a universal line or at least $n$ lines, whereas in terms of the asymptotic growth rate, Chiniforooshan and Chv\' atal \cite{Chini_Chvatal} proved that such a metric space has $\Omega(n^{4/3})$ lines. (See Theorem \ref{thm.2metric} and our extension Theorem \ref{thm.3metric}). They also gave an example showing that the bound is tight.
Their construction is actually a family of metric spaces induced by graphs:

\begin{eg} (\cite{Chini_Chvatal}) Consider the metric space induced by a complete $k$-partite graph on $n$ vertices where
$k = \lfloor n^{2/3} \rfloor$ and the vertex set is divided into the $k$ parts as evenly as possible.
There is no universal line and the number of lines is $\Theta(n^{4/3})$.
\end{eg}

Here we give another example.

\begin{eg}
For each integer $s>1$, construct a path of length $s^3$
\[P = (v_0, v_1, ..., v_{s^3}).\]
Then add $s^2$ paths disjoint from $P$ and each other, except at their end points, as follows.
For each $0 \leq j < s^2$, add a path of length $s+1$ between $v_{js}$ and $v_{js+s}$.
Let $n = s^3 + 1 + s^2(s-1)$ be the number of vertices.
There is no universal line and the number of lines is $\Theta(n^{4/3})$.
\end{eg}

In recent work, Chen, Huzhang, Miao, and Yang \cite{CHMY} studied, among other things, graph metrics where no line is a proper subset of another. They proved:

\begin{thm}\label{thm.gd} (\cite{CHMY}) Let $g(n)$ be the least number of lines over metric spaces
induced by a connected graph with $n$ vertices, and where there is no universal line and no
line is a proper subset of another. Then $g(n) \in \Omega(n^{4/3})$ and $g(n) \in O(n^{4/3} \ln^{2/3} n)$.
\end{thm}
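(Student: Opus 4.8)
The plan is to prove the two bounds by separate arguments that both live at the interface of the $\alpha/\beta/\gamma$ machinery of Section~\ref{sect.pairs} and the diameter-sensitive estimate of Theorem~\ref{thm.d_graph}; the interesting point is that the hypothesis ``no line is a proper subset of another'' should be precisely what lets one upgrade the $(n/D)^{4/3}$ bound of Theorem~\ref{thm.d_graph} to a clean $n^{4/3}$.

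For the lower bound $g(n)\in\Omega(n^{4/3})$, I would fix a graph metric $(V,\rho)$ with no universal line and no line properly contained in another, and run the double counting of Theorem~\ref{thm.d_graph}. Since $\sum_L|E(L)|=\binom{n}{2}$ and all distances are integers, either we already see many lines, or some line $L$ is generated by a large set of pairs at a single distance $d$. By Lemma~\ref{lem.betagamma} and Lemma~\ref{lem.gamma_clique} such a family contains a large subfamily that is pairwise $\gamma$-related or pairwise $\alpha$-related (the $\beta$-case forces $d=1$, exactly as in Case~2.2 of the proof of Theorem~\ref{thm.d_graph}). A line generated by $Q$ pairwise $\gamma$-related pairs already yields $\Omega(Q^2)$ lines by Lemma~\ref{lem.graph_gamma_quad}, \emph{with no loss from the diameter}, so together with the trivial count this disposes of every contribution except that of the $\alpha$-related families.

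The main obstacle is the $\alpha$-related case, where Lemma~\ref{lem.graph_alpha_quad} only gives $\Omega((Q/D)^2)$, and it is exactly this diameter $D$ that must be removed. The $\alpha$-related pairs generating a fixed line are nested along a common geodesic, so a large diameter produces long such nests; the plan is to show that this is incompatible with the antichain hypothesis. Concretely, I would try to extract from a diametral geodesic $(p_0,\dots,p_D)$ a properly increasing chain of lines $\ov{p_0p_{i_1}}\subsetneq\ov{p_0p_{i_2}}\subsetneq\cdots$ of length $\Omega(D)$, using only the betweenness axioms and Fact~\ref{fact.pseudometric}, contradicting ``no line is a proper subset of another'' once $D$ is large. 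Bounding the diameter by a constant in this way is the crux; once that is done, Theorem~\ref{thm.d_graph} gives $\Omega((n/D)^{4/3})=\Omega(n^{4/3})$ at once, and I expect this containment-along-a-geodesic analysis to be the genuinely new and delicate step.

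For the upper bound $g(n)\in O(n^{4/3}\ln^{2/3}n)$, I would perturb a known extremal example rather than build from scratch. The complete $\lfloor n^{2/3}\rfloor$-partite graph of Chiniforooshan and Chv\'atal~\cite{Chini_Chvatal} is a graph metric with no universal line and only $\Theta(n^{4/3})$ lines; its only defect for the present problem is that some of its lines are nested. The device for destroying these inclusions is randomization: attach to each part (or to each vertex) an independent random gadget so that, with high probability, every line receives a private witness vertex certifying that it lies in no other line. A union bound over the $O(n^{8/3})$ ordered pairs of lines makes the failure probability small, and optimizing the amount of randomness inflates the line count by at most a factor $\Theta(\ln^{2/3}n)$. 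It then remains to verify that the perturbed space is still a connected graph metric with no universal line and that the perturbation creates no new large families of lines; these checks are routine and I do not expect them to be the bottleneck.
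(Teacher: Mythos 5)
First, a point of order: the paper does not prove this statement at all. Theorem~\ref{thm.gd} is quoted from~\cite{CHMY} as background for Chv\'atal's $\Omega(n^{4/3})$ conjecture at the end of Section~\ref{sect.graph}, so there is no in-paper proof to compare yours against; it must be judged on its own merits. Judged that way, both halves of your proposal have genuine gaps, and they sit exactly at the two places you call the ``crux'' and the ``routine checks''.

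For the lower bound, everything rests on the claim that ``no universal line and no proper inclusion between lines'' forces the diameter $D$ to be $O(1)$, which you propose to prove by extracting from a diametral geodesic a chain $\ov{p_0p_{i_1}}\subsetneq\ov{p_0p_{i_2}}\subsetneq\cdots$. The mechanism behind this is false: betweenness in a graph metric is not outer transitive, so $[p_0p_ip_j]$ yields no containment between $\ov{p_0p_i}$ and $\ov{p_0p_j}$ in either direction. Concretely, in the $9$-cycle with vertices $0,1,\dots,8$ one checks that $\ov{01}=V\sm\{5\}$ while $\ov{02}=V\sm\{5,6\}$, so along the geodesic $(0,1,2)$ the \emph{longer} pair generates the \emph{smaller} line; in other graphs neither line contains the other, since a point $u$ with $[up_0p_i]$ need not be collinear with $\{p_0,p_j\}$ at all (cf.\ the parallelogram in a $4$-cycle). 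Moreover, a single proper inclusion already violates the hypothesis, so what your plan actually requires is the assertion that \emph{every} graph metric of large diameter with no universal line contains two nested lines. Nothing in Fact~\ref{fact.pseudometric} or Section~\ref{sect.pairs} points toward that; it is a strong structural statement, and it is the whole problem rather than a step. Finally, even granting the rest of your outline, it is not true that ``only the $\alpha$-families'' carry a diameter loss: only the $\gamma$-case is diameter-free (via Lemma~\ref{lem.gamma_clique}(c) and Lemma~\ref{lem.graph_gamma_quad}), while the initial pigeonhole over the $D$ possible distances and the $d=1$ case of Theorem~\ref{thm.d_graph} (which goes through Theorem~\ref{thm.2metric} applied to a neighbourhood of size about $n/D$) each lose factors of $D$ as well. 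So without the diameter claim the count never improves on $(n/D)^{4/3}$.

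For the upper bound, what you describe is not a construction. In the complete $\lfloor n^{2/3}\rfloor$-partite example of~\cite{Chini_Chvatal} the nesting is not a marginal defect to be perturbed away: the line of a distance-$1$ pair is the union of the two parts containing it, while the line of a distance-$2$ pair inside a part $P$ is $\{a,a'\}\cup(V\sm P)$, so each of the $\Theta(n^{4/3})$ distance-$1$ lines is properly contained in each of the $\Theta(n^{4/3})$ distance-$2$ lines generated inside the other parts --- on the order of $n^{8/3}$ violations. Repairing this ``by attaching random gadgets'' requires exhibiting the gadget, proving the modified graph still has no universal line, proving that no inclusion survives among old lines, new lines (every pair involving a gadget vertex generates one), and mixtures of the two, and showing the total count inflates by only $O(\ln^{2/3}n)$; the exponent $2/3$ on the logarithm has to come out of an actual optimization, not be ``expected''. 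These verifications are not routine --- they are the content of the theorem, and they are carried out in~\cite{CHMY}, not here.
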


No family of finite metric spaces induced by graphs (without a universal line) that we know of has $o(n^{4/3})$ lines. The following conjecture was first proposed by V. Chv\'{a}tal.

\begin{conj} (V. Chv\'{a}tal, personal communication) In any metric space induced by a connected $n$-vertex graph, either there is a universal line or there are $\Omega(n^{4/3})$ lines.
\end{conj}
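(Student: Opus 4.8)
The plan is to retain the line count of Theorem~\ref{thm.d_graph} as the engine for graphs of small diameter and to build a genuinely new argument that survives when the diameter is large. Write $D$ for the diameter of the graph metric $(V,\rho)$ and assume there is no universal line. Theorem~\ref{thm.d_graph} already supplies $(2^{-7/3}-o(1))(n/D)^{4/3}$ lines, which is $\Omega(n^{4/3})$ as soon as $D=O(1)$, so the entire difficulty is concentrated in the regime $D=\Omega(n^{\delta})$, possibly as large as $\Theta(n)$. There Lemma~\ref{lem.long_path} yields only $D$ lines from a longest geodesic, and the second example above (a long path carrying $\Theta(n^{2/3})$ short ``rungs'') shows this is hopeless on its own: its diameter is $\Theta(n)$ while it truly has $\Theta(n^{4/3})$ lines, so the extra $\Theta(n^{1/3})$ factor must be harvested from off\nobreakdash-geodesic structure rather than from the geodesic itself.

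First I would dispatch dense graphs and then work line\nobreakdash-by\nobreakdash-line rather than by diameter. If some vertex has degree $\Omega(n)$, its neighbourhood is a $2$-metric subspace and Theorem~\ref{thm.2metric} already gives $\Omega(n^{4/3})$ lines, which restrict to distinct lines of $(V,\rho)$. Otherwise the maximum degree is $o(n)$, so the number of distance-$1$ pairs is $o(n^2)$ and at least $(\tfrac12-o(1))n^2$ pairs lie at distance $\ge 2$. Assuming for contradiction that there are fewer than $c\,n^{4/3}$ lines, some line $L$ is generated by $P=\Omega(n^{2/3})$ pairs \emph{at distance $\ge 2$}; each such pair $\{a,b\}$ has an interior vertex of a shortest path strictly between its endpoints, so $I(a,b)\neq\emptyset$ and $\{a,b\}$ is not a $\beta$-pair. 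By Lemma~\ref{lem.betagamma} any two generating pairs of $L$ are then $\alpha$- or $\gamma$-related, and by Lemma~\ref{lem.gamma_clique} the $\gamma$-pairs of $L$ are mutually $\gamma$-related and share a single length; hence if a constant fraction of the $P$ pairs are $\gamma$-pairs, Lemma~\ref{lem.graph_gamma_quad} produces $\Omega(P^2)=\Omega(n^{4/3})$ lines with \emph{no} loss in $D$, a contradiction. We may therefore assume $L$ is generated chiefly by $\alpha$-pairs, lying at distances that are large and, fatally for the existing method, \emph{varying}.

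The crux is thus an $\alpha$-analogue of Lemma~\ref{lem.graph_gamma_quad}. That lemma extracts $\Omega(Q^2)$ lines from $Q$ pairwise $\gamma$-related generating pairs with no dependence on length or diameter, whereas Lemma~\ref{lem.graph_alpha_quad} delivers only $\Omega((Q/D)^2)$ and moreover assumes all $Q$ pairs share one length $\ell$. There are two weaknesses to remove: the factor $D^2$, which enters solely through the bucketing of the far endpoints $b_k$ by the distance $\rho(a_1,b_k)\in\{1,\dots,D\}$, and the equal\nobreakdash-length hypothesis. Merely summing Lemma~\ref{lem.graph_alpha_quad} over the length classes cannot reach the target, since $\sum_\ell (Q_\ell/D)^2 \le P^2/D^2$ is far below $n^{4/3}$ for large $D$ and the resulting lines may coincide across classes. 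The plan is instead to prove directly that any $Q$ pairwise $\alpha$-related pairs generating a common line $L$ produce $\Omega(Q^2)$ distinct lines, by showing that the set $\{b_k\}$ of far endpoints is \emph{globally} collinear\nobreakdash-free and that each $\ov{b_ib_j}$ meets it in exactly $\{b_i,b_j\}$, irrespective of the individual lengths. Applied with $Q=\Omega(P)=\Omega(n^{2/3})$ this yields $\Omega(n^{4/3})$ lines and, reassuringly, only \emph{matches} the tight rung example rather than exceeding it.

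I expect this last step to be the main obstacle. Without a common length the endpoints $a_k,b_k$ no longer align along $L$, so the clean picture underlying the claims of Lemma~\ref{lem.graph_alpha_quad}---in which every $b_k$ sits at a fixed distance from the reference point $a_1$ and $[b_ia_ib_j]$ holds---collapses, and the walk-based intermediate\nobreakdash-value argument of Lemma~\ref{lem.med} that locates the ``crossing'' point along $L$ must be replaced by one sensitive to differing lengths. Proving global collinearity\nobreakdash-freeness of $\{b_k\}$ across lengths appears to require a rigidity of graph metrics that lies beyond inner transitivity and beyond the $\alpha/\beta/\gamma$ trichotomy, which was tailored to pairs generating a single line and says nothing quantitative when the generating pairs differ in length. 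The most plausible route is a global injection from unordered index pairs $\{i,j\}$ to distinct lines, in place of the local single\nobreakdash-length bijections used so far; making such an injection rigorous is exactly the gap between the $\Omega(n^{4/7})$ bound of Theorem~\ref{thm.graphs} and the conjectured $\Omega(n^{4/3})$.
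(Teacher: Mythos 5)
You are attempting to prove a statement that the paper itself does not prove: it is the closing \emph{conjecture} of the paper (attributed to Chv\'atal, personal communication), and the paper's strongest unconditional results for graph metrics remain Theorem~\ref{thm.d_graph}, giving $\Omega((n/D)^{4/3})$ lines, and Theorem~\ref{thm.graphs}, giving $\Omega(n^{4/7})$. So there is no proof to compare yours against; the only question is whether your argument closes the conjecture, and it does not. The parts of your reduction that are carried out are correct, and they are essentially the paper's proof of Theorem~\ref{thm.d_graph} rerun without fixing a single generating distance: the high-degree case via Theorem~\ref{thm.2metric}, the observation that in a graph metric a generating pair at distance at least $2$ has nonempty interior and hence cannot be a $\beta$-pair, and the $\gamma$-case via Lemmas~\ref{lem.gamma_clique} and~\ref{lem.graph_gamma_quad} (which indeed need no common length, since $\gamma$-relatedness forces one) all go through, and together they correctly concentrate the whole difficulty into a single claim: $Q$ pairwise $\alpha$-related pairs, of possibly different lengths, generating one common line must produce $\Omega(Q^2)$ distinct lines.

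That claim is precisely what you do not prove, and it is not a technical sharpening of Lemma~\ref{lem.graph_alpha_quad} but the entire remaining content of the conjecture. The diameter enters the paper's $\alpha$-lemma twice: harmlessly, when the at most $D$ pairs whose interior contains the reference point $a_1$ are discarded, and fatally, when the surviving far endpoints are pigeonholed so that every $b_k$ lies at one common distance $d$ from $a_1$. That common distance is load-bearing, not cosmetic: the claim $[b_ia_ib_j]$ is proved by playing $[a_1a_ib_jb_i]$ against $\rho(a_1,b_i)=\rho(a_1,b_j)$, and the final collinear-freeness of $\{b_k\}$ rests on that claim; even the orientation of each pair (which endpoint is ``far'') is defined through $a_1$ and the common length $\ell$. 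With varying lengths you have no canonical choice of $b_k$, no analogue of these claims, and no substitute mechanism --- as you yourself say, the needed rigidity ``lies beyond'' the $\alpha/\beta/\gamma$ trichotomy, and making the injection rigorous ``is exactly the gap.'' That is an accurate map of the obstacle, and your consistency check against the rung example is sensible, but a plan whose central lemma is conjectural is not a proof: as written, your argument establishes nothing beyond what Theorems~\ref{thm.d_graph} and~\ref{thm.graphs} already give.
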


\section*{Acknowledgement}

We thank Va\v{s}ek Chv\'atal for bringing us together and for much help he provided us. This research was undertaken, in part, during a visit of Xiaomin Chen to Concordia University, thanks to funding from the Canada Research Chairs program and from the Natural Sciences and Engineering Research Council of Canada.
We thank Va\v{s}ek, Laurent Beaudou, Ehsan Chiniforooshan, and Peihan Miao for many helpful discussions about the subject.

\end{document}